\newcommand{\x}{\ensuremath{\boldsymbol{x}}}
\newcommand{\y}{\ensuremath{\boldsymbol{y}}}
\newcommand{\mupar}{\ensuremath{\boldsymbol{\mu}}}
\newcommand{\etapar}{\ensuremath{\boldsymbol{\eta}}}
\newcommand{\peim}{\mathcal{P}_{\mathrm{EIM}}}
\newcommand{\spn}[1]{\mathrm{span}\{#1\}}
\newcommand*{\argmax}{\operatornamewithlimits{\arg\max}\limits}
\newcommand*{\argsup}{\operatornamewithlimits{arg\,sup}\limits}
\DeclareMathOperator*{\dive}{\mathrm{div}}
\providecommand{\norm}[1]{\lVert#1\rVert}
\begin{document}

%  \articletype{...}

%  \author*[1]{...}
  \author[1]{Gianluigi Rozza}
  \author[2]{Martin Hess}
  \author[3]{Giovanni Stabile}
  \author[4]{Marco Tezzele}
  \author[5]{Francesco Ballarin}
  \runningauthor{G.~Rozza et al.}
  \affil[1]{Mathematics Area, SISSA mathLab, Trieste, Italy, \texttt{gianluigi.rozza@sissa.it}}
  \affil[2]{Mathematics Area, SISSA mathLab, Trieste, Italy, \newline \texttt{martin.hess@sissa.it}}
  \affil[3]{Mathematics Area, SISSA mathLab, Trieste, Italy, \texttt{giovanni.stabile@sissa.it}}
  \affil[4]{Mathematics Area, SISSA mathLab, Trieste, Italy, \texttt{marco.tezzele@sissa.it}}
  \affil[5]{Mathematics Area, SISSA mathLab, Trieste, Italy, \texttt{francesco.ballarin@sissa.it}}

  \title{Basic Ideas and Tools for Projection-Based Model Reduction of Parametric Partial Differential Equations}
  \runningtitle{Basic Ideas and Tools for MOR of Parametric PDEs}
%  \subtitle{...}

  \abstract{We provide first the functional analysis background required for reduced order modeling and present the underlying concepts of reduced basis model reduction. 
The projection-based model reduction framework under affinity assumptions, offline-online decomposition and error estimation is introduced. 
Several tools for geometry parametrizations, such as free form deformation, radial basis function interpolation and inverse distance weighting interpolation are explained.
The empirical interpolation method is introduced as a general tool to deal with non-affine parameter dependency and non-linear problems. 
The discrete and matrix versions of the empirical interpolation are considered as well.
Active subspaces properties are discussed to reduce high-dimensional parameter spaces as a pre-processing step.
Several examples illustrate the methodologies.} 
   
  \keywords{Reduced Basis Method, Error Estimation, Free Form Deformation, Radial Basis Function Interpolation, Inverse Distance Weighting Interpolation, 
Empirical Interpolation Method, Active Subspaces}

  \startpage{1}

\maketitle

%We introduce variational techniques to solve PDEs numerically and the notion of parametrized PDEs in the reduced basis context.
%Basic properties and techniques such as Ritz-Galerkin method, Petrov-Galerkin method, well-posedness of the parametrized variational formulation and parametrized stability constants 
%are briefly introduced.
%The offline-online decompostion and affine parameter-dependency is defined and their application is outlined in a general way as well as with examples of a Poisson and a Navier-Stokes problem.

%-- error bounds
%-- sampling
%-- geo param

%The discretized PDE relates to the continous formulation in multiple ways...
%What is a parametrized PDE and what is the parametric manifold?
%The computational gains of model reduction are achieved through two properties...
%Showing how this works out in an abstract setting and on examples (maybe Poisson and NSE)

\newtheorem{thm}{Theorem}[section]
\newtheorem{defn}[thm]{Definition}
\newtheorem{lemma}[thm]{Lemma}
\theoremstyle{dgthm}

\section*{Introduction}

Parametric model order reduction techniques have been developed in recent decades to deal with increasingly complex computational tasks.
The ability to compute how quantities of interest change with respect to parameter variations provides insight and understanding, which is vital in all areas of science and engineering.
Model reduction thus allows to deal with optimization or inverse problems of a whole new scale.
Each chapter of the handbook gives an in-depth view of a model order reduction (MOR, or reduced order modeling (ROM)) method, a particular application area, analytical, 
numerical or technical aspects of software frameworks for model reduction.

There exists a large number of MOR techniques used in many areas of science and engineering
to improve computational performances and contain costs in a repetitive computational environment such as many-query and real-time computing~\cite{Schilders2008}.
We assume a given parametrized partial differential equation (PDE) as starting point of the model reduction procedure.
Typical parameters of interest are material coefficients, corresponding to physical qualities of the media which constitute the 
domain where the PDE is solved. 
Also a variable geometry can be of special interest in a task to find the optimal device configuration.
Physical states such as the temperature might be considered an input parameter. 
It is a task of the mathematical modeling to identify the parameters of interest and how they enter the PDE. 
Once a parametrized model is identified, the MOR techniques described in this and the following chapters can be used either in 
a `black-box' fashion (non-intrusive way) or by intrusive means, which will be explained in detail, whenever this is necessary.

The particular numerical method to solve a PDE is most often not relevant to the model reduction procedure. 
We will therefore assume there is a numerical method available, which solves the problem to any required accuracy, and
move seamlessly from the continuous form to the discretized form.

This chapter covers briefly the functional analysis framework relevant to many, but not all, MOR methods. 
Presented is the starting point of PDE-oriented MOR techniques, which can be found in the various chapters of the handbook.

In particular, section~\ref{sec:basicNotions} provides what is needed for the projection-based ROM. 
Starting from the setting of the classical Lax-Milgram theorem for elliptic PDEs in subsection~\ref{chap_1_1:subsec_ppde} and subsection~\ref{chap_1_1:subsec_variational}, 
a numerical discretisation is introduced in subsection~\ref{chap_1_1:subsubsec_numdisc}.
Due to brevity of representation, many concepts of functional analysis and theory of PDEs are only ouched upon.
Many references to the literature for further reading are given.

Projection-based ROM is presented in subsection~\ref{chap_1_1:subsec:MOR_base_concepts}, with the following topics covered in detail: 
proper orthogonal decomposition in subsection~\ref{chap_1_1:subsubsec_POD}, the greedy algorithm in subsection~\ref{subsec:greedy}, 
the projection framework in subsection~\ref{chap_1_1:subsubsec_projectROM}, affine parameter dependency in subsection~\ref{subsec:affine}, 
the offline-online decomposition in subsection~\ref{chap_1_1:subsubsec_offon_decomp} and basic error estimation in subsection~\ref{subsec:error_bounds}.
%Many terms from numerical analysis are introduced and the reader can find a substantial number of links to the literature for further reading.

%This chapter covers the basic functional analytic background and introduces a number of concepts common to 
%many PDE-oriented MOR techniques in section~\ref{sec:basicNotions}, such as offline-online decomposition, basic error estimation and affine geometry parametrization.
Section~\ref{sec:geom} introduces efficient techniques for geometric parametrizations, arising from a reference domain approach, such as 
free form deformation in subsection~\ref{sec:ffd}, radial basis function interpolation in subsection~\ref{sec:rbf} and inverse distance weighting in subsection~\ref{sec:idw}.

A widely used method to generate an approximate affine parameter dependency is the \emph{empirical interpolation method}. 
The original \emph{empirical interpolation method} is presented in section~\ref{sec:nonaffinity} as well as 
the \emph{discrete empirical interpolation method} in subsection~\ref{subsec:DEIM} and further options in subsection~\ref{subsec:further_options}.
Several numerical examples show the use of the \emph{empirical interpolation method} in subsection~\ref{subsec:example}.

Section~\ref{sec:active} introduces active subspaces as a pre-processing step to reduce the parameter space dimension.
Corresponding examples are provided in subsection~\ref{chap_1_4:some_examples} and also nonlinear dimensionality reduction is briefly discussed in subsection~\ref{sec:nonlinear}.
%The subsequent chapters then provide in-depth presentations of current MOR techniques.

A brief conclusion and outlook at the handbook is given in section~\ref{chap1:sec:conclude}.

%To give a start, reduced basis (RB) model order reduction (MOR) is briefly introduced in a very concise way. 
%This topic is covered in much more detail in subsequent chapters and reviews can be found 
%in~\cite{Patera:2008, Benner:2015, quarteroni2016, hesthaven2016, Haasdonk2017}.
%A review on the use of MOR in uncertainty quantification can be found in~\cite{ChenQR2017}.

 \section{Basic Notions and Tools}
\label{sec:basicNotions}
 
We briefly cover a few main results of linear functional analysis and the analysis of partial differential equations (PDEs).
This material serves as a reminder of the underlying concepts of model reduction but can not replace a textbook on these subjects.
For a more thorough background, we refer to the literature on functional analysis~\cite{Ciarlet2014, Yosida1995},
partial differential equations~\cite{Adams1075, Evans1998, Renardy2004, Rudin1976},
and numerical methods~\cite{AINSWORTH19971, BABUSKA1970/71, ciarlet2002, GRATSCH2005235, quarteroni2017, Verfuerth2013}. 

%In particular, the following model reduction concepts are based on the PDE-approach: the \textbf{chapter on POD by Hinze/Volkwein},
%\textbf{chapter on RB by Patera/Maday}, \textbf{chapter on PGD by Chinesta/Ladev\`eze}, \textbf{chapter on Hyperreduction by Farhat / Grimberg / Quarteroni / Manzoni}, 
%\textbf{chapter on localized MOR by Buhr / Iapichino / Ohlberger / Rave / Schindler / Smetana} and the \textbf{chapter on DMD by Brunton/Kutz}.

%Literature on numerical methods....
%Literature on RB....    
% \cite{quarteroni2016} and \cite{hesthaven2016}

\subsection{Parametrized Partial Differential Equations}
\label{chap_1_1:subsec_ppde}

% Meaning, Importance, Examples 

Let $\Omega \subset \mathbb{R}^d$ denote a spatial domain in $d = 1, 2$ or $3$ dimensions with boundary $\partial \Omega$.
A Dirichlet boundary $\Gamma_D \subset \partial \Omega$ is given, where essential boundary conditions on the field of interest are prescribed.
Introduce a Hilbert space $V(\Omega)$ equipped with inner product $(\cdot, \cdot)_V$ and induced norm $\| \cdot \|_V$.
A Hilbert space $V(\Omega)$ is a function space, i.e., a function $u \in V(\Omega)$ is seen as a point in the vector space $V$, as is common in functional analysis.
Each $u \in V(\Omega)$ defines a mapping $x \in \Omega \mapsto u(x) \in \mathbb{R}$ or $x \in \Omega \mapsto u(x) \in \mathbb{C}$, depending on whether a real or complex Hilbert space is considered.
In many applications, $V$ is a subset of the Sobolev space $H^1(\Omega)$ as $V(\Omega) = \{v \in H^1(\Omega)\colon v\vert_{\Gamma_D} = 0 \} $.
Vector-valued Hilbert spaces can be constructed using the Cartesian product of $V(\Omega)$.
Given a parameter domain $\mathcal{P} \subset \mathbb{R}^p$, a particular parameter point is denoted by the $p$-tuple $\boldsymbol\mu  = ( \mu_1, \mu_2, \ldots, \mu_p )$.
The set of all linear and continous forms on $V$ defines the dual space $V'$ and let $L \in \mathcal{L}(V, V')$ denote a linear differential operator.

A field variable $u \in V : \Omega \rightarrow \mathbb{R}$ is defined implicitly as the solution to a parametrized linear partial differential equation (PDE)
through the operator $L : V \times \mathcal{P} \rightarrow V'$ with $L(\cdot;\boldsymbol\mu) \in \mathcal{L}(V, V')$ and load vector $f_L(\boldsymbol\mu) \in V'$ 
for each fixed $\boldsymbol\mu$, as

\begin{equation}
 L(u; \boldsymbol\mu) = f_L(\boldsymbol\mu) .
\label{chap_1_1:Lu}
\end{equation}

As in the case of function spaces, operators between function spaces form vector spaces themselves, such as $L(\cdot;\boldsymbol\mu) \in \mathcal{L}(V, V')$, with $\mathcal{L}(V, V')$ being the 
space of operators mapping from the vector space $V$ to $V'$. 
%The space $\mathcal{L}(V, V')$ then becomes a vector space. 
%Thus, $L(\cdot;\boldsymbol\mu) \in \mathcal{L}(V, V')$ defines a mapping $u \in V \mapsto L(u) \in V'$ for each fixed $\boldsymbol\mu$.

Typical examples of scalar-valued linear PDEs are the Poisson equation, Heat equation or Wave equation, while typical examples of vector-valued linear PDEs are
Maxwells equations or Stokes equations. 
The nonlinear case will be addressed in various chapters as well: examples of nonlinear PDEs are the Navier-Stokes system or the equations describing nonlinear elasticity.
%More details in the topics touched in this introduction can be found for example in \cite{ciarlet2002}, \cite{quarteroni2016} and \cite{hesthaven2016}.

%A nonlinear PDE operator $\mathcal{N} : V \times \mathcal{D} \rightarrow V$ implicitly defines the field variable as 

%\begin{equation}
% \mathcal{N}(u; \mu) = f_N .
%\label{chap_1_1:Nu}
%\end{equation}

%Examples are the Navier-Stokes system or nonlinear elasticity.

\subsection{Parametrized Variational Formulation}
\label{chap_1_1:subsec_variational}
% at least do norms, inner products, Lax-Milgram, continuity, coercivity, inf-sup constant
% in continous setting

% Galerkin Orthogonality: Ritz-Galerkin and Petrov-Galerkin

The variational or weak form of a parametrized linear PDE in the continuous setting is given as 

\begin{equation}
 a(u(\boldsymbol\mu),v;\boldsymbol\mu) = f(v;\boldsymbol\mu) \quad \forall v \in V,
\label{chap_1_1:cont_weak}
\end{equation}

\noindent with bilinear form $a : V \times V \times \mathcal{P} \rightarrow \mathbb{R}$ and linear form $f : V \times \mathcal{P} \rightarrow \mathbb{R}$.
In many application scenarios, a particular output of interest is sought, given by the linear form $l : V \times \mathcal{P} \rightarrow \mathbb{R}$ as

\begin{equation}
 s(\boldsymbol\mu) = l(u(\boldsymbol\mu);\boldsymbol\mu) .
\end{equation}

In the case that $a(\cdot, \cdot; \boldsymbol\mu)$ is symmetric and $l = f$, the problem is called compliant.

For each $\boldsymbol\mu \in \mathcal{P}$ assume coercivity and continuity of the bilinear form $a(\cdot,\cdot;\boldsymbol\mu)$, i.e.,

\begin{eqnarray}
 a(w,w;\boldsymbol\mu) &\geq& \alpha(\boldsymbol\mu) \|w\|^2_V , \\
 a(w,v;\boldsymbol\mu) &\leq& \gamma(\boldsymbol\mu) \| w \|_V \| v \|_V ,
\end{eqnarray}

\noindent and continuity of the linear form $f(\cdot;\boldsymbol\mu)$, 

\begin{equation}
 f(w;\boldsymbol\mu) \leq \delta(\boldsymbol\mu) \|w\|_V ,
\end{equation}

\noindent with parameter-independent bounds, which satisfy $0 < \alpha \leq \alpha(\boldsymbol\mu)$, $\gamma(\boldsymbol\mu) \leq \gamma < \infty$ and $\delta(\boldsymbol\mu) \leq \delta < \infty$.
To do actual computations, the biliner form is discretized into a linear equation. The coercivity property means that the matrix discretizing the bilinear form will be positive definite.

For fixed parameter the well-posedness of \eqref{chap_1_1:cont_weak} is then established by the Lax-Milgram theorem:

\begin{thm}

\textbf{Lax-Milgram Theorem}

Let $a : V \times V \rightarrow \mathbb{R}$ be a continuous and coercive bilinear form over a Hilbert space $V$
and $f \in V'$ a continuous linear form. Then the variational problem 

\begin{equation}
 a(u,v) = f(v) \quad \forall v \in V,
\label{chap_1_1:cont_weak_LaxMilgram}
\end{equation}

\noindent has a unique solution $u \in V$ and it holds

\begin{equation}
 \|u\|_V \leq \frac{1}{\alpha} \|f \|_V ,
\end{equation}

\noindent with the coercivity constant $\alpha > 0$ of the bilinear form.

\end{thm}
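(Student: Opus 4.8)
The plan is to reduce the variational problem to a single operator equation on $V$ and then establish existence by a contraction argument, with uniqueness and the stability bound falling out directly from coercivity. First I would invoke the Riesz representation theorem twice. Since $f$ is a continuous linear form, there is a unique $F \in V$ with $f(v) = (F,v)_V$ for all $v \in V$ and $\|F\|_V = \|f\|_{V'}$ (the dual norm, which is what the stated bound abbreviates as $\|f\|_V$). Likewise, for each fixed $u \in V$ the map $v \mapsto a(u,v)$ is a continuous linear functional by the continuity estimate $a(u,v) \le \gamma \|u\|_V \|v\|_V$, hence it is represented by a unique element $Au \in V$, i.e. $a(u,v) = (Au,v)_V$ for all $v$. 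Bilinearity of $a$ makes $A : V \to V$ linear, and the problem $a(u,v) = f(v)$ for all $v$ becomes equivalent to the equation $Au = F$.

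Next I would record the two key properties of $A$. Continuity of $a$ yields $\|Au\|_V \le \gamma \|u\|_V$ (apply the bound with $v = Au$ and cancel one factor), so $A$ is bounded. Coercivity gives $\alpha \|u\|_V^2 \le a(u,u) = (Au,u)_V \le \|Au\|_V \|u\|_V$, hence $\alpha \|u\|_V \le \|Au\|_V$, showing that $A$ is injective and bounded below. Uniqueness then follows at once: if two solutions differ by $w$, then $a(w,v) = 0$ for all $v$, and taking $v = w$ with coercivity forces $w = 0$. The a priori bound is equally direct — testing the equation with $v = u$ gives $\alpha \|u\|_V^2 \le a(u,u) = f(u) \le \|f\|_{V'}\|u\|_V$, and dividing by $\|u\|_V$ produces $\|u\|_V \le \tfrac{1}{\alpha}\|f\|_{V'}$, which is the claimed estimate.

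The main obstacle is existence, i.e. surjectivity of $A$, because the lower bound by itself only guarantees that the range of $A$ is closed. To resolve this I would introduce, for a parameter $\rho > 0$, the affine map $T_\rho(v) = v - \rho(Av - F)$, whose fixed points are exactly the solutions of $Au = F$. Expanding $\|w - \rho A w\|_V^2 = \|w\|_V^2 - 2\rho (Aw,w)_V + \rho^2 \|Aw\|_V^2$ and inserting the coercivity and continuity estimates gives $\|T_\rho v_1 - T_\rho v_2\|_V^2 \le (1 - 2\rho\alpha + \rho^2\gamma^2)\|v_1 - v_2\|_V^2$. Choosing $0 < \rho < 2\alpha/\gamma^2$ makes the contraction factor strictly less than one, so by the Banach fixed-point theorem $T_\rho$ has a unique fixed point $u \in V$, the sought solution. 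As an alternative ending that avoids fixed points, one may argue that any $w$ orthogonal to $\operatorname{ran}(A)$ satisfies $\alpha\|w\|_V^2 \le (Aw,w)_V = 0$, so $w = 0$; combined with closedness of the range this shows $\operatorname{ran}(A) = V$.
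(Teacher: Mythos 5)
Your proof is correct and complete, but there is nothing in the paper to compare it against: the paper states the Lax--Milgram theorem as classical background material, without any proof, deferring to the functional analysis literature it cites. Your argument is the standard textbook one --- Riesz representation to convert the variational problem into the operator equation $Au = F$, boundedness and the lower bound $\alpha\|u\|_V \le \|Au\|_V$ from continuity and coercivity, uniqueness and the a priori estimate by testing with $v = u$, and existence via the Banach fixed-point theorem applied to $T_\rho(v) = v - \rho(Av - F)$ with $0 < \rho < 2\alpha/\gamma^2$ (or, as in your alternative ending, via closedness of the range plus triviality of its orthogonal complement). All the individual steps check out, including the cancellation trick $\|Au\|_V^2 = a(u,Au) \le \gamma\|u\|_V\|Au\|_V$ and the expansion of $\|w - \rho Aw\|_V^2$ giving the contraction factor $1 - 2\rho\alpha + \rho^2\gamma^2 < 1$. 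You also correctly flagged that the bound $\|u\|_V \le \frac{1}{\alpha}\|f\|_V$ in the statement abbreviates the dual norm $\|f\|_{V'}$, a notational imprecision in the paper itself. The one point worth making explicit if you wanted a fully airtight write-up: in the alternative ending, the closedness of $\operatorname{ran}(A)$ follows from the lower bound because a Cauchy sequence $Au_n$ forces $u_n$ to be Cauchy, hence convergent, and continuity of $A$ identifies the limit --- you assert closedness but do not derive it.
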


Thus, in the parametric setting, the $\boldsymbol\mu$-dependence also carries over to the coercivity constant as $\alpha = \alpha(\boldsymbol\mu)$.

The function space in which the field variable resides is called the ansatz space, while the second function space is called the test space, i.e., where a test function $v$ resides. 
If the test space is distinct from the ansatz space then the bilinear form is defined over $a : V \times W \times \mathcal{P} \rightarrow \mathbb{R}$
for $V$ and $W$ Hilbert spaces. With $f \in W'$ and for fixed $\boldsymbol\mu$, the well-posedness is then established through the Banach-Ne\v cas-Babu\v ska theorem.

\begin{thm}

\textbf{Banach-Ne\v cas-Babu\v ska Theorem}
Let $V$ and $W$ denote Hilbert spaces, $a : V \times W \rightarrow \mathbb{R}$ a continuous bilinear form and $f \in W'$. Then the variational problem

\begin{equation}
  a(u,v) = f(v) \quad \forall v \in W,
\end{equation}

\noindent has a unique solution if and only if 

(i) the inf-sup condition holds, i.e, 

\begin{align*}
 \exists \beta > 0 \text{ , s.t., } \beta \leq \adjustlimits \inf_{v \in V \setminus \{0\}} \sup_{w \in W \setminus \{0\}} \frac{a(v,w)}{ \| v \|_V  \| w \|_W },
\end{align*}

(ii) $\forall w \in W:$ 

\begin{align*}
 \{ a(v,w) = 0 \quad \forall v \in V \} \implies w = 0 .
\end{align*}

\end{thm}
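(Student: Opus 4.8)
The plan is to recast the variational problem as a single operator equation and then characterise solvability through the two conditions. First I would introduce the bounded linear operator $A : V \to W'$ defined by $\langle A v, w \rangle_{W', W} = a(v, w)$ for all $w \in W$; continuity of the bilinear form guarantees that $A$ is well-defined and bounded, with $\|A v\|_{W'} \leq \gamma \|v\|_V$. Under this identification the problem $a(u, v) = f(v)$ for all $v \in W$ is equivalent to the equation $A u = f$ in $W'$, and the claim that a unique solution exists for every $f \in W'$ is exactly the statement that $A$ is a bijection. Since $A$ is a continuous bijection, the bounded inverse theorem then makes $A^{-1}$ automatically bounded, so the a priori estimate will follow once bijectivity is established.

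Next I would show that condition (i) is equivalent to $A$ being bounded below, namely $\|A v\|_{W'} \geq \beta \|v\|_V$ for all $v \in V$. In the real setting one has $\|A v\|_{W'} = \sup_{w \neq 0} a(v, w)/\|w\|_W$, since replacing $w$ by $-w$ shows that the supremum of the signed quotient coincides with that of its absolute value. Dividing by $\|v\|_V$ and taking the infimum over $v \in V \setminus \{0\}$ turns the inf-sup inequality precisely into the bound-below inequality with the same constant $\beta$. A standard Cauchy-sequence argument then shows that any operator bounded below is injective with closed range: if $A v_n \to g$, the estimate $\|v_n - v_m\|_V \leq \beta^{-1} \|A v_n - A v_m\|_{W'}$ shows that $(v_n)$ is Cauchy, its limit $v$ satisfies $A v = g$ by continuity, and hence $g$ lies in the range $R(A)$.

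The decisive step is to upgrade ``closed range'' to ``surjective'' using condition (ii). Because $V$ and $W$ are Hilbert spaces, $W$ is reflexive and $W'' \cong W$, so any element of $W''$ annihilating the closed subspace $R(A) \subseteq W'$ is represented by some $w \in W$ with $\langle A v, w \rangle_{W', W} = a(v, w) = 0$ for all $v \in V$. Condition (ii) forces $w = 0$, so the annihilator of $R(A)$ is trivial; since by Hahn-Banach a proper closed subspace always admits a nonzero annihilating functional, this yields $R(A) = W'$. Combined with the injectivity from (i), the operator $A$ is a bijection, which establishes existence and uniqueness, and evaluating the bound-below inequality at $u = A^{-1} f$ delivers $\|u\|_V \leq \beta^{-1} \|f\|_{W'}$.

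For the converse I would run the implications backwards: if $A$ is a bijection, then $A^{-1}$ is bounded by the bounded inverse theorem, which yields the inf-sup bound (i) with $\beta = \|A^{-1}\|^{-1}$, while surjectivity forces any $w \in W$ satisfying $a(v, w) = 0$ for all $v \in V$ to annihilate all of $W'$, hence to vanish, giving (ii). The main obstacle I anticipate is the surjectivity argument: one must verify carefully that the annihilator of $R(A)$, computed in the bidual, corresponds exactly to condition (ii), which is where reflexivity of the Hilbert spaces together with the closed-range and Hahn-Banach machinery must be invoked cleanly. This is precisely the ingredient that is not needed in the coercive Lax-Milgram setting, where coercivity supplies surjectivity directly.
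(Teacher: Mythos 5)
Your proposal is correct, but there is nothing in the paper to compare it against: the paper states the Banach--Ne\v{c}as--Babu\v{s}ka theorem as background material, without proof (as it does for Lax--Milgram and Cea's lemma), so your argument has to stand on its own --- and it does. The reduction to the operator equation $Au = f$ with $A : V \to W'$, the equivalence of the inf-sup condition with $\|Av\|_{W'} \geq \beta \|v\|_V$ (including the remark that the signed supremum equals the absolute one), the Cauchy-sequence argument for closed range, the annihilator-plus-reflexivity step upgrading closed range to surjectivity via condition (ii), and the converse via the bounded inverse theorem are exactly the standard functional-analytic proof, and each step is sound. Two minor observations: first, you silently (and correctly) read ``has a unique solution'' as ``has a unique solution for every $f \in W'$''; this is the intended meaning, and the ``only if'' direction would actually be false for a single fixed $f$ (take $A$ compact, injective, with dense but non-closed range and $f \in R(A)$), so it is worth making that reading explicit. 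Second, since $W$ is a Hilbert space you could replace the bidual/Hahn--Banach machinery with the Riesz representation: identify $W' \cong W$, note $R(A)$ is a closed subspace of $W$, and apply condition (ii) to any element of its orthogonal complement. This shortens the surjectivity step and avoids invoking reflexivity, though your more general argument has the advantage of working verbatim when $V$ and $W$ are merely reflexive Banach spaces.
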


%Consider a parametrized bilinear form $a(\cdot, \cdot; \mu) : V \times V \times \mathcal{D}$, defined over discrete spaces $V$

\subsubsection{Discretized Parametrized Variational Formulation}
\label{chap_1_1:subsubsec_numdisc}

% repeat method of weighted residuals
The method of weighted residuals is used to cast \eqref{chap_1_1:Lu} into a discrete variational formulation.
Given the linear PDE $L(u;\boldsymbol\mu) = f_L(\boldsymbol\mu)$, consider a discrete, i.e., finite dimensional approximation $u_h \in V_h \subset V$
to $u$ as

\begin{equation}
 u_h(\boldsymbol\mu) = \sum_{i=1}^{N_h} u_h^{(i)} \varphi^i .
  \label{chap_1_1:discr_exp}
\end{equation}

The dimension of $V_h$ is $N_h$ and the set of ansatz functions $\varphi^i(\mathbf{x}) : \Omega \rightarrow \mathbb{R}$ belong to $V$.
The $u_h^{(i)}$ are scalar coefficients such that the vector $\textbf{u}_h = (u_h^{(1)}, \ldots, u_h^{(N_h)})^T \in \mathbb{R}^{N_h}$ is the coordinate 
representation of $u_h$ in the basis $\{\varphi^i\}$ of $V_h$.
A conforming discretizations is considered, i.e., $V_h \subset V$ holds.

Plugging \eqref{chap_1_1:discr_exp} into \eqref{chap_1_1:Lu} yields the discrete residual $R(u_h(\boldsymbol\mu)) = L(u_h(\boldsymbol\mu);\boldsymbol\mu) - f_L(\boldsymbol\mu) \in V'$.
To compute the scalar coefficients $u_h^{(i)}$, Galerkin orthogonality is invoked, as 

\begin{equation}
 %0 = (\varphi_j, R) = \int_\Omega \varphi_j(\mathbf{x}) R(u_h(\mathbf{x}; \boldsymbol\mu)) d \mathbf{x}, \quad j = 1 \ldots N_h .
 0 = (\varphi_j, R)_{(V,V')}, \quad j = 1 \ldots N_h ,
 \label{chap_1_1:mwr_galerkin_orth}
\end{equation}

\noindent where $(\cdot, \cdot)_{(V,V')}$ is the duality pairing between $V$ and $V'$.

In short, Galerkin orthogonality means that the test space is orthogonal to the residual.
In Ritz-Galerkin methods, the residual is tested against the same set of functions as the ansatz functions.
If test space and trial space are different, one speaks of a Petrov-Galerkin method.
Numerous discretization methods can be understood in terms of the method of weighted residuals.
They are distinguished by the particular choice of trial and test space.
%The finite element method is the most widely used method.

The well-posedness of the discrete setting follows the presentation of the continuous setting, by casting the equations and properties over $V_h$ instead of $V$.

The weak form in the discrete setting is given as 

\begin{equation}
 a(u_h({\boldsymbol\mu}),v_h;{\boldsymbol\mu}) = f(v_h;{\boldsymbol\mu}) \quad \forall v_h \in V_h,
\label{chap_1_1:discr_weak}
\end{equation}

\noindent with bilinear form $a : V_h \times V_h \times \mathcal{P} \rightarrow \mathbb{R}$ and linear form $f : V_h \times \mathcal{P} \rightarrow \mathbb{R}$.
The discrete bilinear form is then derived from \eqref{chap_1_1:mwr_galerkin_orth} through the integration-by-parts formula and Green's theorem.

Correspondingly, the discrete coercivity constant $\alpha_h(\mu)$ and the discrete continuity constant $\gamma_h(\mu)$ are defined

\begin{eqnarray}
 \alpha_h(\boldsymbol\mu) &=& \min_{w_h \in V_h} \frac{a(w_h,w_h;\boldsymbol\mu)}{\|w_h\|^2_{V_h}} , \\
 \gamma_h(\boldsymbol\mu) &=& \max_{w_h \in V_h} \max_{v_h \in V_h} \frac{a(w_h,v_h;\boldsymbol\mu)}{\|w_h\|_{V_h} \|v_h\|_{V_h}} .
\end{eqnarray}

The well-posedness of \eqref{chap_1_1:cont_weak} is then analogously established by the Lax-Milgram theorem and  Banach-Ne\v cas-Babu\v ska theorem.
Cea's Lemma is a fundamental result about the approximation quality that can be achieved:

\begin{lemma}
 
\textbf{Cea's Lemma}
Let $a : V \times V \rightarrow \mathbb{R}$ be a continuous and coercive bilinear form over a Hilbert space $V$
and $f \in V'$ a continuous linear form. Given a conforming finite-dimensional subspace $V_h \subset V$, the continuity constant $\gamma$ and 
coercivity constant $\alpha$ of $a(\cdot,\cdot)$ it holds for the solution $u_h$ to 

\begin{equation}
 a(u_h,v_h) = f(v_h) \quad \forall v_h \in V_h ,
\end{equation}

\noindent that 

\begin{equation}
 \| u - u_h \|_V \leq \frac{\gamma}{\alpha} \inf_{v_h \in V_h} \|u - v_h \|_V . 
\end{equation}

\end{lemma}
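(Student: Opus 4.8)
The plan is to exploit conformity, $V_h \subset V$, which lets the continuous variational identity be tested against discrete functions. First I would take an arbitrary $v_h \in V_h$ and use it as a test function in the continuous problem $a(u,v) = f(v)$, which is legitimate precisely because $V_h \subset V$. Subtracting the discrete equation $a(u_h, v_h) = f(v_h)$ from the continuous one yields the orthogonality relation $a(u - u_h, v_h) = 0$ for all $v_h \in V_h$ — the Galerkin orthogonality already introduced above. This expresses that the error $u - u_h$ is $a$-orthogonal to the entire discrete subspace, and it is the structural fact on which everything else rests.

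The second step is to bound the energy of the error from below by coercivity, $\alpha \|u - u_h\|^2_V \leq a(u - u_h, u - u_h)$, and then to introduce an arbitrary competitor $v_h \in V_h$ by splitting the second argument as $u - u_h = (u - v_h) + (v_h - u_h)$. Since $v_h - u_h \in V_h$, Galerkin orthogonality annihilates that contribution, leaving $a(u - u_h, u - u_h) = a(u - u_h, u - v_h)$. Applying continuity to the surviving term gives $a(u - u_h, u - v_h) \leq \gamma \|u - u_h\|_V \|u - v_h\|_V$.

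Chaining these estimates produces $\alpha \|u - u_h\|^2_V \leq \gamma \|u - u_h\|_V \|u - v_h\|_V$; dividing by $\|u - u_h\|_V$ (the case $u = u_h$ being trivial) and taking the infimum over $v_h \in V_h$ yields the claim. The argument is essentially mechanical once Galerkin orthogonality is in hand, so I do not expect a genuine analytic obstacle; the one point deserving care is the orthogonal splitting, since recognizing that inserting $v_h$ and using $v_h - u_h \in V_h$ is exactly what makes the best-approximation error $\inf_{v_h \in V_h}\|u - v_h\|_V$ appear is the crux of the proof. Existence and uniqueness of both $u$ and $u_h$ are taken for granted from the Lax-Milgram theorem applied on $V$ and on $V_h$ respectively.
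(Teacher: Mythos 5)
Your proof is correct and complete: Galerkin orthogonality from conformity, then coercivity, the splitting $u - u_h = (u - v_h) + (v_h - u_h)$, continuity, and division by $\|u - u_h\|_V$ (with the trivial case $u = u_h$ handled) is exactly the classical argument for Cea's Lemma. Note that the paper itself states this lemma without any proof --- it is presented as background material with references to the numerical-analysis literature --- so there is no in-paper proof to compare against; your argument is the standard textbook one and fills that gap correctly.
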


The stiffness matrix $\mathbb{A}_h \in \mathbb{R}^{N_h \times N_h}$ assembles the bilinear form entrywise as $(\mathbb{A}_h)_{ij} = a(\varphi^j, \varphi^i)$.
The load vector $\textbf{f}_h \in \mathbb{R}^{N_h}$ is assembled entrywise as $(\textbf{f}_h)_i = f(\varphi^i)$ and the
solution vector is denoted $\textbf{u}_h$ with coefficients $u_h^{(j)}$.

Then solving \eqref{chap_1_1:discr_weak} amounts to solving the linear system

\begin{equation}
 \mathbb{A}_h \textbf{u}_h = \textbf{f}_h .
\label{chap_1_1:linalg_dscr_weak}
\end{equation}

The most common discretization method is the finite element method (FEM)~\cite{Boffi2013}, besides finite difference~\cite{Smith1985}, 
discontinuous Galerkin~\cite{ArnoldBrezzi2002}, finite volume~\cite{Eymard2000}
and spectral element method~\cite{CHQZ1}.

\subsection{Model Reduction Basic Concepts}
\label{chap_1_1:subsec:MOR_base_concepts}

A wide variety of reduced order modeling methods exist today, thanks to large research efforts in the last decades.
Reduced basis model order reduction is a projection-based MOR method and also shares many features with other MOR methods, so that the topics mentioned here will occur throughout the handbook.
Two common algorithms for the generation of a projection space, the proper orthogonal decomposition and the greedy algorithm, are presented first.

\subsubsection{Proper Orthogonal Decomposition}
\label{chap_1_1:subsubsec_POD}

Assume a sampled set of high-fidelity solutions $\{u_h({\boldsymbol\mu_i}), i = 1, \ldots, N_{max} \}$, i.e., solutions to \eqref{chap_1_1:discr_weak} or \eqref{chap_1_1:linalg_dscr_weak}, respectively.
The discrete solution vectors are stored column-wise in a snapshot matrix $\mathbb{S} \in \mathbb{R}^{N_h \times N_{max}}$.
The proper orthogonal decomposition (POD) compresses the data stored in $\mathbb{S}$ by computing an orthogonal matrix $\mathbb{V}$, which is a best approximation in the least square sense to $\mathbb{S}$.
In particular, the POD solution of size $N$ is the solution to

\begin{eqnarray}
 %\centering
% min V \\
\label{chap_1_1:minim_SVD1}
 \min_{\mathbb{V} \in \mathbb{R}^{N_h \times N}} \| \mathbb{S} - \mathbb{V} \mathbb{V}^T \mathbb{S} \|_F , \\ 
 \label{chap_1_1:minim_SVD2}
 \text{subject to } \mathbb{V}^T \mathbb{V} = \mathbb{I}_{N \times N} ,
\end{eqnarray}

\noindent with $\| \cdot \|_F$ the Frobenius norm and $\mathbb{I}_{N \times N}$ the identity matrix.

There exists a solution to \eqref{chap_1_1:minim_SVD1} - \eqref{chap_1_1:minim_SVD2} according to the Eckardt-Young-Mirsky theorem~\cite{Eckart1936}, which can be computed with the
singular value decomposition (SVD) as

\begin{equation}
 \mathbb{S} = \mathbb{U} \Sigma \mathbb{Z},
\end{equation}

\noindent with orthogonal matrix $\mathbb{U} \in \mathbb{R}^{N_h \times N_h}$, rectangular diagonal matrix $\Sigma \in \mathbb{R}^{N_h \times N_{max}}$ 
and orthogonal matrix $\mathbb{Z} \in \mathbb{R}^{N_{max} \times N_{max}}$. 
The solution $\mathbb{V}$ is composed of the first $N$ column vectors of $\mathbb{U}$. They are also called the \emph{POD modes}.
The diagonal entries $\{\sigma_i, i = 1, \dots, \min(N_h, N_{max})\}$ of $\Sigma$ are non-negative and called \emph{singular values}.
It holds that 

\begin{equation}
 \min_{\mathbb{V} \in \mathbb{R}^{N_h \times N}} \| \mathbb{S} - \mathbb{V} \mathbb{V}^T \mathbb{S} \|_F = \sum_{i = N + 1}^{\min(N_h, N_{max})} \sigma_i .
 \label{chap_1_1:trunc_SVD}
\end{equation}

Thus, the neglected singular values give an indication of the approximate truncation error. 
In practise, a high tolerance threshold like $99\%$ or  $99.99\%$ is chosen and $N$ is determined so that the sum of the first $N$ 
\emph{singular values} reaches this percentage of the sum of all \emph{singular values}. 
In many applications, an exponential singular value decay can be observed, which allows to reach the tolerance with a few POD modes.

\subsubsection{Greedy Algorithm}\label{subsec:greedy}

The greedy algorithm also computes an orthogonal matrix $\mathbb{V} \in \mathbb{R}^{N_h \times N}$ to serve as a projection operator, just as in the POD case.
The greedy algorithm is an iterative procedure, which enriches the snapshot space according to where an error indicator or error estimator $\Delta$ attains its maximum.
Starting from a field solution at a given initial parameter value, the parameter location is sought, whose field solution is worst approximated with the initial solution.
This solution is then computed and appended to the projection matrix to obtain a $2$-dimensional projection space. 
The greedy typically searches for new snapshot solutions within a discrete surrogate $P$ of the parameter space $\mathcal{P}$.
The process is repeated until a given tolerance on the error estimator is fulfilled.
The error estimator is residual-based and estimates the error between a reduced order solve for a projection space $\mathbb{V}$ and the high-fidelity solution, see subsection \ref{subsec:error_bounds}.
The greedy algorithm is stated in pseudocode in algorithm \ref{alg:greedy}.

\begin{algorithm}
\caption{The greedy algorithm}
\begin{algorithmic} % enter the algorithmic environment
\REQUIRE discrete surrogate $P$ of parameter space $\mathcal{P}$, approximation tolerance \texttt{tol}, initial parameter $\boldsymbol\mu_1$
\ENSURE projection matrix $\mathbb{V}$
\STATE $N = 1$
\STATE $\mathbb{V}_1 = \frac{\textbf{u}_h(\boldsymbol\mu_1)}{ \| \textbf{u}_h(\boldsymbol\mu_1) \| } $
\WHILE {$\max_{\boldsymbol\mu \in P} \Delta(\boldsymbol\mu) > $ \texttt{tol}}
\STATE $N = N + 1$
\STATE $\boldsymbol\mu_N = \argmax_{\boldsymbol\mu \in P} \Delta(\boldsymbol\mu)$
\STATE solve \eqref{chap_1_1:linalg_dscr_weak} at $\boldsymbol\mu_N$ for $\textbf{u}_h(\boldsymbol\mu_N)$
\STATE orthonormalize $\textbf{u}_h(\boldsymbol\mu_N)$ with respect to $\mathbb{V}_{N-1}$ to obtain $\zeta_N$
\STATE append $\zeta_N$ to $\mathbb{V}_{N-1}$ to obtain $\mathbb{V}_{N}$
\ENDWHILE
\STATE set $\mathbb{V} = \mathbb{V}_{N}$
\end{algorithmic}
\label{alg:greedy}
\end{algorithm}

\subsubsection{Reduced Order System}
\label{chap_1_1:subsubsec_projectROM}

Starting from the discrete high-fidelity formulation \eqref{chap_1_1:discr_weak}, another Galerkin projection is invoked to arrive at
the reduced order formulation. 
%A set of solutions is computed at parameter values $S_{N_{max}} = \{ \boldsymbol\mu^1, \ldots, \boldsymbol\mu^{N_{max}} \}$, either by prespecifying $S_{N_{max}}$ or
%using an iterative algorithm such as the greedy sampling (\cite{Patera:2008}).
%These solutions are often called `snapshots'. 
Assume a projection space $V_N$ is then determined through either a proper orthogonal decomposition (POD) or the greedy sampling, with $\mathbb{V} \in \mathbb{R}^{N_h \times N}$ denoting a discrete 
basis of $V_N$. 
%Snapshots are solutions to \eqref{chap_1_1:discr_weak} at a sample of parameter points $S_N = \{ \boldsymbol\mu^1, \ldots, \boldsymbol\mu^N \}$.
Thus $V_N \subset V_h$ and $\dim V_N = N$. 
%Let $\{\zeta^1, \hdots, \zeta^N\}$ denote a basis of $V_N$.

The reduced order variational formulation is to determine $u_N(\boldsymbol\mu) \in V_N$, such that

\begin{equation}
 a(u_N(\boldsymbol\mu),v_N;\boldsymbol\mu) = f(v_N;\boldsymbol\mu) \quad \forall v_N \in V_N.
\label{chap_1_1:ROM_weak}
\end{equation}

%Just to provide a preliminary idea, let $\mathbb{V} \in \mathbb{R}^{N_h \times N}$ denote the matrix of basis vectors, derived from the snapshot solutions.
Eq. \eqref{chap_1_1:linalg_dscr_weak} is then projected onto the reduced order space as 

\begin{equation}
 \mathbb{V}^T \mathbb{A}_h \mathbb{V} \textbf{u}_N = \mathbb{V}^T \textbf{f}_h.
\label{chap_1_1:linalg_rom_weak}
\end{equation}

The reduced system matrix $\mathbb{A}_N = \mathbb{V}^T \mathbb{A}_h \mathbb{V}$ is then a dense matrix of small size $N \times N$ as depicted in \eqref{chap_1_1:ROM_FOM_project_pic}:

\begin{equation}
\begin{bmatrix} \mathbb{A}_N \end{bmatrix} =   
\label{chap_1_1:ROM_FOM_project_pic}
\begin{bmatrix} \\ \mathbb{V}  \\ \\ \end{bmatrix}^T
\begin{bmatrix} a(\varphi^1, \varphi^1) &  \ldots & \ldots  \\
 \ldots  & \ldots &  \ldots \\
\ldots  & \ldots &  a(\varphi^{N_h}, \varphi^{N_h})
\end{bmatrix}
\begin{bmatrix} \\ \mathbb{V}  \\ \\ \end{bmatrix}.
\end{equation}

The high-order solution is then approximated as 

\begin{equation}
 \textbf{u}_h \approx \mathbb{V} \textbf{u}_N .
\label{chap_1_1:ROMu_approx_FOMu}
\end{equation}

\subsubsection{Affine Parameter Dependency}\label{subsec:affine}

Many MOR algorithms rely on an affine parameter dependency, because
the affine parameter dependency provides the computational efficiency of the model reduction.
Thus, it is a significant advancement from the 2000's~\cite{Patera:2008}
over the first use of reduced order models~\cite{Almroth1978, Noor1982}.

An affine parameter dependency means that the bilinear form can be expanded as

\begin{equation}
 a(\cdot,\cdot;\boldsymbol\mu) = \sum_{i=1}^{Q_a} \Theta_a^i(\boldsymbol\mu) a_i(\cdot,\cdot) ,
\label{chap_1_1:affine_param_dep_bilifo}
\end{equation}

\noindent and affine expansions hold as

\begin{align}
 f(\cdot; \boldsymbol\mu) &= \sum_{i=1}^{Q_f} \Theta_f^i(\boldsymbol\mu) f_i(\cdot) , \label{chap_1_1:affine_param_dep_bilifo_f}\\
 l(\cdot; \boldsymbol\mu) &= \sum_{i=1}^{Q_l} \Theta_l^i(\boldsymbol\mu) l_i(\cdot) , \label{chap_1_1:affine_param_dep_bilifo_l}
\end{align}

\noindent with scalar-valued functions $\Theta_a^i : \mathcal{P} \rightarrow \mathbb{R}, \Theta_f^i : \mathcal{P} \rightarrow \mathbb{R}$ and $\Theta_l^i  : \mathcal{P} \rightarrow \mathbb{R}$.

Correspondingly the linear system \eqref{chap_1_1:linalg_dscr_weak} can be expanded as

\begin{equation}
 \left( \sum_{i=1}^{Q_a} \Theta_a^i(\boldsymbol\mu) \mathbb{A}_i \right) \textbf{u}_h = \sum_{i=1}^{Q_f} \Theta_f^i(\boldsymbol\mu) \textbf{f}_i ,
\label{chap_1_1:linalg_dscr_weak_affine_expanded}
\end{equation}

\noindent as well as the reduced order form \eqref{chap_1_1:linalg_rom_weak} 

\begin{align}
 \mathbb{V}^T \left( \sum_{i=1}^{Q_a} \Theta_a^i(\boldsymbol\mu) \mathbb{A}_i \right) \mathbb{V} \textbf{u}_N &= \mathbb{V}^T \sum_{i=1}^{Q_f} \Theta_f^i(\boldsymbol\mu) \textbf{f}_i , \\
  \left( \sum_{i=1}^{Q_a} \Theta_a^i(\boldsymbol\mu) \mathbb{V}^T \mathbb{A}_i \mathbb{V} \right)  \textbf{u}_N &=  \sum_{i=1}^{Q_f} \Theta_f^i(\boldsymbol\mu) \mathbb{V}^T \textbf{f}_i .
\label{chap_1_1:linalg_rom_weak_affine_expanded}
\end{align}

MOR relies on an affine parameter dependency, such that all computations depending on the high-order model size can be moved into
a parameter-independent offline phase, while having a fast input-output evaluation online.
If the problem is not affine, an affine representation can be approximated using a technique such as the \emph{empirical interpolation method}, see section \ref{sec:nonaffinity}.

\subsubsection{Affine shape parametrizations: an example}
\label{sec:affinepar}
Consider heat conduction in a square domain $\Omega(x,y) = [ 0, 1 ]^2$.
On the left side $x = 0$, inhomogeneous Neumann conditions, i.e., a non-zero heat flux is imposed and 
on the right side $x = 1$, homogeneous Dirichlet conditions, i.e., zero temperature is imposed.
The top and bottom side, homogeneous Neumann conditions, i.e., a zero heat flux is imposed. 
Consider two different media with different conductivities $\sigma_1$ and $\sigma_2$ 
occupying the subdomains $\Omega_1(\mu) = [0, \mu] \times [0, 1] $ and $\Omega_2(\mu) = [\mu, 1] \times [0, 1]$, for $\mu \in \mathcal{P} = (0, 1)$, 
as shown in Figure~\ref{chap_1_1:fig:example_affine_domains}. 
For the sake of clarity, in the rest of this section we identify the $1$-dimensional parameter vector $\boldsymbol\mu$ with its (only) component 
$\mu$, thus dropping the bold notation from the symbol.

\begin{figure}[h!]
\centering
\includegraphics[width=0.7\textwidth]{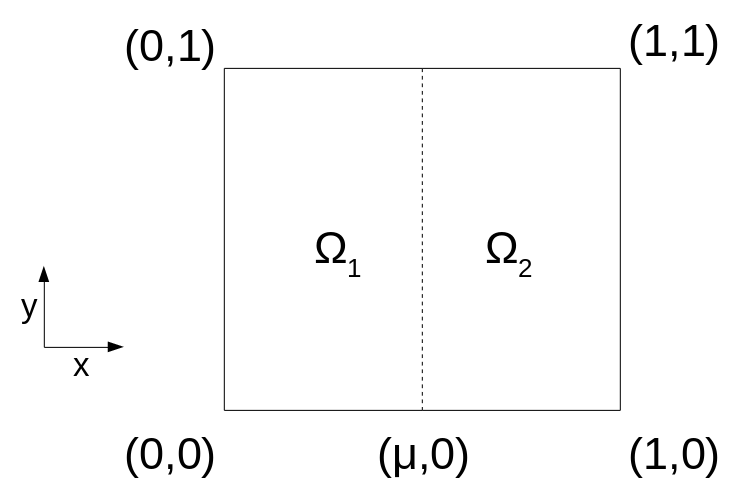}
\caption{The computational domain is subdivided into two domains $\Omega = \Omega_1 \cup \Omega_2$, depending on the parameter $\mu$. Shown here for $\mu = 0.5$. }
\label{chap_1_1:fig:example_affine_domains}
\end{figure}

Choosing $\overline{\mu} = 0.5$ as the reference configuration, there exist affine transformations from the reference domain to the actual domain.
It holds 

\begin{eqnarray}
 T_1 : \Omega_1(\overline{\mu}) \rightarrow \Omega_1(\mu) &:& (\overline{x},\overline{y}) \mapsto (2 \mu \overline{x}, \overline{y}) , \\
 T_2 : \Omega_2(\overline{\mu}) \rightarrow \Omega_2(\mu) &:& (\overline{x},\overline{y}) \mapsto ( (2 - 2 \mu) \overline{x}, \overline{y}) + (2\mu -1, 0) .
\end{eqnarray}

In general, an affine transformation of a subdomain can be expressed as 

\begin{equation}
 T_k : \Omega_k(\overline{\mu}) \rightarrow \Omega_k(\mu) : \mathbf{x} \mapsto G_k(\mu) \mathbf{x} + D_k(\mu),
\end{equation}

\noindent with $\mathbf{x} \in \mathbb{R}^d$, $G_k \in \mathbb{R}^{d \times d}$ and $D_k \in \mathbb{R}^d$ in $d = 2,3$ spatial dimensions.

Thus, the bilinear form 

\begin{equation}
 a(u,v;\mu) = \int_{\Omega_1(\mu)} \sigma_1 \nabla u \cdot \nabla v d\mathbf{x} + \int_{\Omega_2(\mu)} \sigma_2 \nabla u \cdot \nabla v \, d\mathbf{x}
\end{equation}

\noindent can be mapped to the reference domain with the inverse affine transformation

\begin{equation}
 T_k^{-1} : \Omega_k(\mu) \rightarrow \Omega_k(\overline{\mu}) : \mathbf{x} \mapsto G_k^{-1}(\mu) \mathbf{x} -  G_k^{-1}(\mu) D_k(\mu),
\end{equation}

\noindent and integration by substitution as

\begin{align}
 a(u,v;\mu) =& \int_{\Omega_1(\overline{\mu})} \sigma_1  (\nabla u G_1^{-1}(\mu)) \cdot (G_1^{-T}(\mu) \nabla v ) \det(G_1(\mu)) \, d\mathbf{x} \\
&+ \int_{\Omega_2(\overline{\mu})} \sigma_2 (\nabla u G_2^{-1}(\mu))  \cdot  (G_2^{-T}(\mu) \nabla v) \det(G_2(\mu)) \, d\mathbf{x} ,
\end{align}

\noindent which establishes the affine parameter dependency \eqref{chap_1_1:affine_param_dep_bilifo} by computing 
 $\Theta_a^i(\mu)$ from the coefficients of $G_1$ and $G_2$~\cite{Patera:2008, RozzaHuynh2009, ChinestaHuerta2017}.
It is:

\begin{align}
 &\int_{\Omega_1(\overline{\mu})} \sigma_1  (\nabla u G_1^{-1}(\mu)) \cdot (G_1^{-T}(\mu) \nabla v ) \det(G_1(\mu)) \, d\mathbf{x} \\
  &= \int_{\Omega_1(\overline{\mu})} \sigma_1  ((2\mu)^{-1} \partial_x u, \partial_y u) \cdot ((2\mu)^{-1} \partial_x v, \partial_y v) 2\mu \, d\mathbf{x} \\
  &= (2\mu)^{-1} \int_{\Omega_1(\overline{\mu})} \sigma_1  (\partial_x u) (\partial_x v) \, d\mathbf{x} + 2\mu \int_{\Omega_1(\overline{\mu})} \sigma_1  (\partial_y u) (\partial_y v) \, d\mathbf{x} ,
\end{align}

\noindent and

\begin{align}
 &\int_{\Omega_2(\overline{\mu})} \sigma_2  (\nabla u G_2^{-1}(\mu)) \cdot (G_2^{-T}(\mu) \nabla v ) \det(G_2(\mu)) \, d\mathbf{x} \\
  &= \int_{\Omega_2(\overline{\mu})} \sigma_2  ((2-2\mu)^{-1} \partial_x u, \partial_y u) \cdot ((2-2\mu)^{-1} \partial_x v, \partial_y v) (2-2\mu) \, d\mathbf{x} \\
  &= (2 - 2\mu)^{-1} \int_{\Omega_2(\overline{\mu})} \sigma_2  (\partial_x u) (\partial_x v) \, d\mathbf{x} + (2-2\mu) \int_{\Omega_2(\overline{\mu})} \sigma_2  (\partial_y u) (\partial_y v) \, d\mathbf{x} ,
\end{align}

\noindent which establishes the affine form \eqref{chap_1_1:affine_param_dep_bilifo} with $Q_a = 4$ and

\begin{align}
 \Theta_a^1(\mu) &= (2\mu)^{-1}, \\
 \Theta_a^2(\mu) &= 2\mu, \\
 \Theta_a^3(\mu) &= (2-2\mu)^{-1}, \\
 \Theta_a^4(\mu) &= 2-2\mu,
\end{align}

\noindent and

\begin{align}
  a_1(\cdot,\cdot) &= \int_{\Omega_1(\overline{\mu})} \sigma_1  (\partial_x u) (\partial_x v) \, d\mathbf{x}, \\
  a_2(\cdot,\cdot) &= \int_{\Omega_1(\overline{\mu})} \sigma_1  (\partial_y u) (\partial_y v) \, d\mathbf{x}, \\
  a_3(\cdot,\cdot) &= \int_{\Omega_2(\overline{\mu})} \sigma_2  (\partial_x u) (\partial_x v) \, d\mathbf{x}, \\
  a_4(\cdot,\cdot) &= \int_{\Omega_2(\overline{\mu})} \sigma_2  (\partial_y u) (\partial_y v) \, d\mathbf{x}.
\end{align}

%This establishes the general affine form:

%\begin{equation*}
 %a(\cdot,\cdot;\boldsymbol\mu) = \sum_{i=1}^{Q_a} \Theta_a^i(\boldsymbol\mu) a_i(\cdot,\cdot) .
%\end{equation*}

The second and fourth term can be further simplified to a term depending on $2\mu$ and a $\mu$-independent term, but in this case it still leaves $Q_a = 4$ terms.
In some cases the number of affine terms can be automatically reduced further using symbolic computations.

\subsubsection{Offline-Online Decomposition}
\label{chap_1_1:subsubsec_offon_decomp}

The offline-online decomposition enables the computational speed-up of the ROM approach in many-query scenarios.
It is also known as the offline-online paradigm, which assumes that a compute-intensive offline phase can be performed on a 
supercomputer, which generates all quantities depending on the large discretization size $N_h$.
Once completed, a reduced order solve, i.e., an online solve for a new parameter of interest can be performed with computational 
cost independent of the large discretization size $N_h$. The online phase can thus be performed even on mobile and embedded devices, see Figure~\ref{fig:off_online}.
If a supercomputer is not available, this can be relaxed however. There exist heuristic algorithms to make also the offline phase feasible 
on a common workstation, such that 
%A most time-intensive part is computing bounds on the stability constant, which can be alleviated using heuristic approaches, \cite{MADAY2002533}, \cite{Hess:2015}.
a typical scenario would be that the offline phase runs overnight and a reduced model is available the next morning.

\begin{figure}[h!]
\centering
\includegraphics[width=0.75\textwidth]{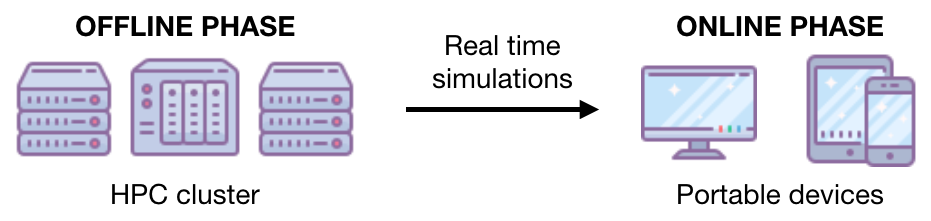}
\caption{Offline-online paradigm. The complex high fidelity simulations are
  carried out in high performance (HPC) clusters for given preselected
parameters. The solution snapshots can be stored and the ROM
trained. Then in the offline phase the ROM provides approximated
solutions at new untried parameters in real time on simple portable devices.}
\label{fig:off_online}
\end{figure}

Noticing that the terms $\mathbb{V}^T \mathbb{A}_i \mathbb{V}$ and $\mathbb{V}^T \textbf{f}_i$ in \eqref{chap_1_1:linalg_rom_weak_affine_expanded}
are parameter-independent, they can be precomputed, prior to any ROM
parameter sweep. This will store small-sized dense matrices of dimension $N \times N$. 
Once a reduced order solution $\textbf{u}_N$ is desired for a given parameter $\boldsymbol\mu$, the sum given in \eqref{chap_1_1:linalg_rom_weak_affine_expanded}
is formed and solved for $\textbf{u}_N$.
Since this is the same as solving \eqref{chap_1_1:linalg_rom_weak}, the reduced order approximation is then available as $\textbf{u}_h \approx \mathbb{V} \textbf{u}_N$, 
see \eqref{chap_1_1:ROMu_approx_FOMu}.

\subsection{Error bounds}
\label{subsec:error_bounds}
In this section we develop effective and reliable a posteriori error
estimators for the field variable or an output of interest. The use of
such error bounds drives the construction of the reduced basis during
the offline stage, thanks to the so-called greedy algorithm. Moreover,
during the online stage, such bounds provide a certified accuracy of
the proposed reduced order model. 

Following~\cite{Patera:2008}, we introduce residual-based a posteriori error estimation for the elliptic case. From \eqref{chap_1_1:discr_weak} and \eqref{chap_1_1:ROM_weak} it follows that the error $e({\boldsymbol\mu}) = u_h({\boldsymbol\mu}) - u_N({\boldsymbol\mu})$ satisfies
\begin{equation}
 a(e(\boldsymbol\mu),v_h;\boldsymbol\mu) = r(v_h;\boldsymbol\mu) \quad \forall v_h \in V_h.
\label{chap_1_1:ROM_error}
\end{equation}
where the residual $r(\cdot;\boldsymbol\mu) \in V_h'$ is defined as
\begin{equation}
r(v_h;\boldsymbol\mu) = f(v_h;\boldsymbol\mu) - a(u_N({\boldsymbol\mu}), v_h;\boldsymbol\mu)  \quad \forall v_h \in V_h.
\label{chap_1_1:ROM_residual}
\end{equation}

The following theorem further characterizes the relation between error and residual:
\begin{thm}
Under compliance assumptions, the following inequalities hold
\begin{align}
&\left\|e(\boldsymbol\mu)\right\|_{\boldsymbol\mu} = \left\|u_h({\boldsymbol\mu}) - u_N({\boldsymbol\mu})\right\|_{\boldsymbol\mu} \leq \Delta_{en}(\boldsymbol\mu) = \frac{\left\|r(\cdot; \boldsymbol\mu)\right\|_{V_h'}}{\sqrt{\alpha_h(\boldsymbol\mu)}},\label{aposteriori_en}\\
&0 \leq s_h({\boldsymbol\mu}) - s_N({\boldsymbol\mu}) \leq \Delta_{s}(\boldsymbol\mu) = \frac{\left\|r(\cdot; \boldsymbol\mu)\right\|_{V_h'}^2}{\alpha_h(\boldsymbol\mu)}\label{aposteriori_s},
\end{align}
where $\left\|v\right\|_{\boldsymbol\mu}^2 = a(v, v; \boldsymbol\mu)$ defines an equivalent norm to $\left\|v\right\|_{V_h}$.
\end{thm}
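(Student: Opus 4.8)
The plan is to prove the two bounds separately, leaning on the symmetry of $a(\cdot,\cdot;\boldsymbol\mu)$ and on the energy norm. I would first record the auxiliary claim that $\|v\|_{\boldsymbol\mu}=\sqrt{a(v,v;\boldsymbol\mu)}$ is genuinely a norm equivalent to $\|\cdot\|_{V_h}$: symmetry plus coercivity make $a(\cdot,\cdot;\boldsymbol\mu)$ an inner product, and the two-sided estimate $\alpha_h(\boldsymbol\mu)\|v\|_{V_h}^2 \leq \|v\|_{\boldsymbol\mu}^2 \leq \gamma_h(\boldsymbol\mu)\|v\|_{V_h}^2$ (coercivity and continuity) gives the equivalence.

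For the energy estimate \eqref{aposteriori_en}, I would test the error equation \eqref{chap_1_1:ROM_error} with $v_h = e(\boldsymbol\mu)$ to obtain $\|e(\boldsymbol\mu)\|_{\boldsymbol\mu}^2 = a(e,e;\boldsymbol\mu) = r(e;\boldsymbol\mu)$. Bounding the right-hand side by the dual norm, $r(e;\boldsymbol\mu) \leq \|r(\cdot;\boldsymbol\mu)\|_{V_h'}\,\|e\|_{V_h}$, and using coercivity in the rearranged form $\|e\|_{V_h} \leq \|e\|_{\boldsymbol\mu}/\sqrt{\alpha_h(\boldsymbol\mu)}$, then dividing through by $\|e\|_{\boldsymbol\mu}$ (the case $e=0$ being trivial), yields exactly $\Delta_{en}(\boldsymbol\mu)$.

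For the output estimate \eqref{aposteriori_s}, the crucial ingredient is Galerkin orthogonality: for any $v_N \in V_N \subset V_h$, the reduced equation \eqref{chap_1_1:ROM_weak} and the definition \eqref{chap_1_1:ROM_residual} give $a(e,v_N;\boldsymbol\mu) = r(v_N;\boldsymbol\mu) = f(v_N;\boldsymbol\mu) - a(u_N,v_N;\boldsymbol\mu) = 0$. Invoking compliance ($l=f$ and $a$ symmetric), I write $s_h - s_N = f(u_h;\boldsymbol\mu) - f(u_N;\boldsymbol\mu) = f(e;\boldsymbol\mu)$; then $f(e;\boldsymbol\mu) = r(e;\boldsymbol\mu) + a(u_N,e;\boldsymbol\mu)$, and using symmetry together with the orthogonality $a(u_N,e;\boldsymbol\mu) = a(e,u_N;\boldsymbol\mu) = 0$ and the identity $r(e;\boldsymbol\mu) = \|e\|_{\boldsymbol\mu}^2$ from the first part, I obtain the clean relation $s_h - s_N = \|e\|_{\boldsymbol\mu}^2$. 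The lower bound $0 \leq s_h - s_N$ is then immediate, and the upper bound follows by squaring \eqref{aposteriori_en}, namely $s_h - s_N = \|e\|_{\boldsymbol\mu}^2 \leq \|r(\cdot;\boldsymbol\mu)\|_{V_h'}^2/\alpha_h(\boldsymbol\mu) = \Delta_s(\boldsymbol\mu)$.

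The manipulations are mechanical once the right test functions are inserted; the one step I would be most careful with is the output bound, which genuinely uses both halves of compliance. Symmetry is what lets me replace $a(u_N,e;\boldsymbol\mu)$ by $a(e,u_N;\boldsymbol\mu)$ and thereby kill it via orthogonality, while $l=f$ is what identifies the output gap with $f(e;\boldsymbol\mu)$ in the first place. Without either hypothesis the exact ``square of the energy error'' identity $s_h - s_N = \|e\|_{\boldsymbol\mu}^2$ collapses, and one recovers at best a one-sided or constant-degraded estimate rather than the sharp statement above.
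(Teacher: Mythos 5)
Your proposal is correct and follows essentially the same route as the paper's proof: test the error equation with $e(\boldsymbol\mu)$ and combine the dual-norm bound with coercivity for \eqref{aposteriori_en}, then use compliance, symmetry, and Galerkin orthogonality ($a(e,v_N;\boldsymbol\mu)=r(v_N;\boldsymbol\mu)=0$ for $v_N\in V_N$) to obtain the identity $s_h({\boldsymbol\mu})-s_N({\boldsymbol\mu})=\left\|e(\boldsymbol\mu)\right\|_{\boldsymbol\mu}^2$, from which both sides of \eqref{aposteriori_s} follow. The only cosmetic difference is that you reach this identity by expanding $f(e;\boldsymbol\mu)=r(e;\boldsymbol\mu)+a(u_N,e;\boldsymbol\mu)$ via the residual definition, whereas the paper writes $f(e;\boldsymbol\mu)=a(u_h,e;\boldsymbol\mu)$ via the high-fidelity weak form first; these are the same manipulation in a different order.
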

\begin{proof}
$\left\|\cdot\right\|_{\boldsymbol\mu}$ defines an equivalent norm thanks to symmetry, continuity and coercivity of $a(\cdot, \cdot; \boldsymbol\mu)$.

Since $e(\boldsymbol\mu) \in V_h$, from \eqref{chap_1_1:ROM_error} with $v_h = e(\boldsymbol\mu)$ it follows
\begin{equation*}
\left\|e(\boldsymbol\mu)\right\|_{\boldsymbol\mu}^2 = a(e(\boldsymbol\mu), e(\boldsymbol\mu); \boldsymbol\mu) = r(e(\boldsymbol\mu);\boldsymbol\mu) \leq \left\|r(\cdot; \boldsymbol\mu)\right\|_{V_h'} \left\|e(\boldsymbol\mu)\right\|_{V_h},
\end{equation*}
the last inequality being due to the definition of the norm in $V_h'$. Furthermore, due to coercivity, it holds
\begin{equation*}
\left\|e(\boldsymbol\mu)\right\|_{\boldsymbol\mu}^2 = a(e(\boldsymbol\mu), e(\boldsymbol\mu); \boldsymbol\mu) \geq \alpha(\boldsymbol\mu) \left\|e(\boldsymbol\mu)\right\|_{V_h}^2.
\end{equation*}
Combining these two results yields \eqref{aposteriori_en}.

Furthermore, since $l = f$ are linear forms,
\begin{equation}
s_h({\boldsymbol\mu}) - s_N({\boldsymbol\mu}) = l(e({\boldsymbol\mu}); \boldsymbol\mu) = f(e({\boldsymbol\mu}); \boldsymbol\mu) = a(u_h({\boldsymbol\mu}), e({\boldsymbol\mu}); \boldsymbol\mu)
\label{proof1}
\end{equation}
From \eqref{chap_1_1:ROM_error} with $v_h := v_N \in V_N$ and \eqref{chap_1_1:ROM_weak} it follows
\begin{equation*}
 a(e(\boldsymbol\mu),v_N; \boldsymbol\mu) = r(v_N({\boldsymbol\mu});\boldsymbol\mu) = 0.
\end{equation*}
This holds in particular for $v_N = u_N(\boldsymbol\mu)$. Moreover, due to symmetry,
\begin{equation*}
 a(u_N(\boldsymbol\mu), e(\boldsymbol\mu); \boldsymbol\mu) = 0
\end{equation*}
as well. Thus, $a(u_h({\boldsymbol\mu}), e({\boldsymbol\mu}); \boldsymbol\mu) = a(e({\boldsymbol\mu}), e({\boldsymbol\mu}); \boldsymbol\mu)$ in \eqref{proof1}, and we conclude that 
\begin{equation}
s_h({\boldsymbol\mu}) - s_N({\boldsymbol\mu}) = \left\|e(\boldsymbol\mu)\right\|_{\boldsymbol\mu}^2.
\label{proof2}
\end{equation}
The upper bound in \eqref{aposteriori_s} is then a consequence of \eqref{aposteriori_en}, while the lower bound trivially holds as the right-hand side of \eqref{proof2} is a non-negative quantity.
\end{proof}

Offline-online decomposition is usually solicited for the a posteriori error bounds introduced by the previous theorem, for the sake of a fast computation of the right-hand side of \eqref{aposteriori_en}-\eqref{aposteriori_s}. This requires the efficient evaluation of both the numerator (dual norm of the residual) and the denominator (parametrized coercivity constant). The Riesz representation theorem is employed to define the unique $\hat{r}(\boldsymbol\mu) \in V_h$ such that
\begin{equation}
(\hat{r}(\boldsymbol\mu), v_h)_{V_h} = r(v_h; \boldsymbol\mu), \quad \forall v_h \in V_h.
\label{eq:riesz}
\end{equation}
Under affine separability assumptions \eqref{chap_1_1:affine_param_dep_bilifo}-\eqref{chap_1_1:affine_param_dep_bilifo_l}, it holds
\begin{equation*}
r(v_h; \boldsymbol\mu) = \sum_{i=1}^{Q_f} \Theta_f^i(\boldsymbol\mu) f_i(v_h) - \sum_{n=1}^N {\mathbf{u}_N}_n \sum_{i=1}^{Q_a} \Theta_a^i(\boldsymbol\mu) a_i(\zeta^n, v_h), \quad \forall v_h \in V_h;
\end{equation*}
so that an affine expansion with $Q_f + N Q_a$ terms is obtained for $r(\cdot; \boldsymbol\mu)$. Riesz representation is then invoked for
\begin{align*}
&r_1(v_h; \boldsymbol\mu) = f_1(v_h),&\hdots,&\quad r_{Q_f}(v_h; \boldsymbol\mu) = f_{Q_f}(v_h),\\
&r_{Q_f + 1}(v_h; \boldsymbol\mu) = a_1(\zeta^1, v_h),&\hdots,&\quad r_{Q_f + Q_a}(v_h; \boldsymbol\mu) = a_{Q_a}(\zeta^1, v_h),\\
&\hdots\\
&r_{Q_f + (N-1) Q_a + 1}(v_h; \boldsymbol\mu) = a_1(\zeta^N, v_h),&\hdots,&\quad r_{Q_f + N Q_a}(v_h; \boldsymbol\mu) = a_{Q_a}(\zeta^{N}, v_h)
\end{align*}
during the offline stage, storing the corresponding solutions to \eqref{eq:riesz}.

For what concerns the evaluation of the denominator of \eqref{aposteriori_en}-\eqref{aposteriori_s}, exact evaluation of $\alpha(\boldsymbol\mu)$ is seldom employed. Instead, an offline-online decomposable lower bound is sought. Early proposals on the topic are available in~\cite{Veroy2002,PrudHomme2002,Veroy2005,Patera:2008,Canuto2009}.
In 2007, the \emph{successive constraint method} (SCM) was devised in~\cite{SCM_original} based on successive linear programming approximations, and subsequently extended in~\cite{Chen2,Chen1,Vallaghe2011,Zhang2011}. Alternative methodologies based on interpolation techniques have also appeared in recent years in~\cite{Hess:2015,Manzoni2015,Iapichino2017}.

A posteriori error estimation can be derived for more general problems as well (including non-coercive linear, nonlinear or time-dependent problems), through application of the Brezzi-Rappaz-Raviart theory. We refer to~\cite{Veroy2005,Deparis2009,Yano2014,manzoni_2014,Enrique2017} for a few representative cases.
To this end, extensions of SCM are discussed in~\cite{Chen1,Hesthaven2012,Huynh2010,Chen2016}.

\section{Geometrical parametrization for shapes and domains}
\label{sec:geom}

In this section we discuss problems characterized by a geometrical
parametrization. In particular, a reference domain approach is
discussed, relying on a map that deforms the reference domain into the
parametrized one. Indeed, while affine shape parametrization (see section \ref{sec:affinepar} for
an example, and \cite{Patera:2008} for more details) naturally abides by the offline-online separability assumption,
it often results in very limited deformation of the reference domain, or strong assumptions on the underlying shape.

Let $\Omega \subset \mathbb{R}^d$, $d=2,3$, be the reference domain.
Let $\mathcal{M}$ be a parametric shape morphing function, that is
\begin{equation}
  \label{eq:general_morphing}
  \mathcal{M}(\boldsymbol{x}; \mupar): \mathbb{R}^d \to \mathbb{R}^d
\end{equation}
which maps the reference domain
$\Omega$ into the deformed domain $\Omega(\mupar)$ as
$\Omega(\boldsymbol{\mu}) = \mathcal{M}(\Omega; \mupar)$, 
where $\mupar \in \mathcal{P}$ represents the vector of the geometrical
parameters. This map will change accordingly to the chosen shape
morphing technique. The case of Section \ref{sec:affinepar} is representative of an affine map $\mathcal{M}(\cdot; \mupar)$.
Instead, in the following we address more general (not necessarily affine)
techniques such as the free form
deformation (FFD), the radial basis functions (RBF) interpolation, and
the inverse distance weighting (IDW) interpolation. 

From a practical point of view, we recommend the Python package called
PyGeM - Python Geometrical Morphing (see~\cite{pygem}), which allows an easy
integration with the majority of industrial CAD files and the most
common mesh files. 

\subsection{Free form deformation}
\label{sec:ffd}

The free form deformation is a widely used parametrization and
morphing technique both in academia and in industry. 

For the original formulation see~\cite{sederbergparry1986}. More
recent works use FFD coupled with reduced basis methods for shape
optimization and design of systems modeled by elliptic PDEs
(see~\cite{LassilaRozza2010},~\cite{rozza2013free},
and~\cite{sieger2015shape}), in naval engineering for the optimization
of the bulbous bow shape of cruise ships (in~\cite{demo2018shape}), in
the context of sailing boats in~\cite{lombardi2012numerical},
and in automotive engineering in~\cite{salmoiraghi2018}.

FFD can be used both for global and local deformations and it is
completely independent to the geometry to morph. It acts through the
displacement of a lattice of points, called FFD control points,
constructed around the domain of interest. In particular it consists
in three different steps as depicted in
Figure~\ref{fig:ffd_scheme}. First the physical domain $\Omega$ is
mapped to $\hat{\Omega}$, the
reference one, through the affine map $\boldsymbol{\psi}$. Then the lattice of control
points is constructed and the displacements of these points by the map
$\hat{T}$, is what we call geometrical parameters $\mupar$. The
deformation is propagated to the entire embedded body usually by using
Bernstein polynomials. Finally through the inverse map $\boldsymbol{\psi}^{-1}$ we
return back to the parametric physical space $\Omega (\mupar)$. 

\begin{figure}[h!]
\centering
\includegraphics[width=0.8\textwidth]{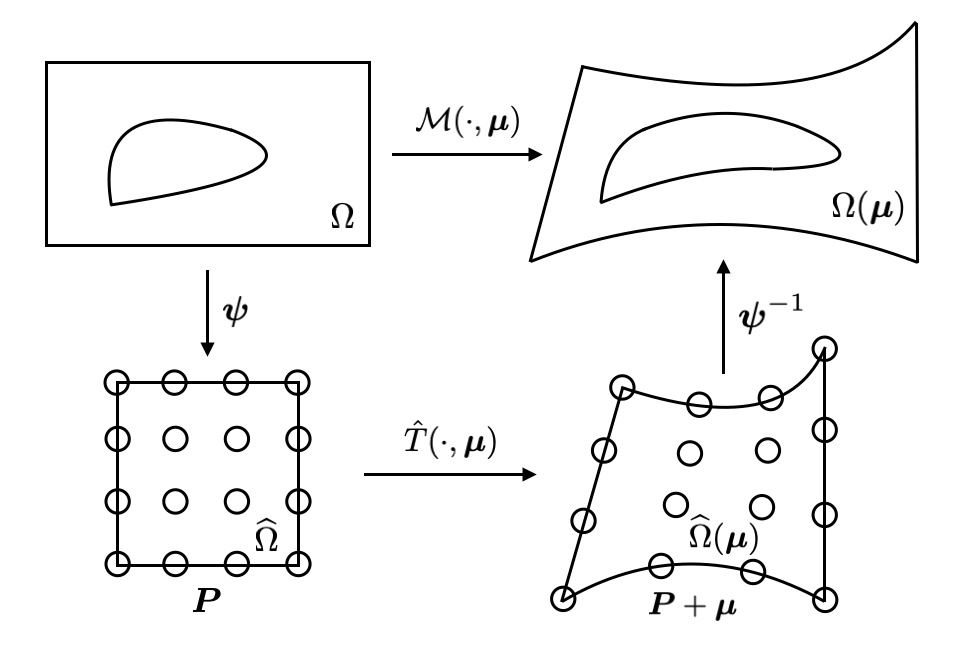}
\caption{Scheme of the three maps composing the FFD map
  $\mathcal{M}$. In particular $\psi$ maps the physical space to the reference one,
  then $\hat{T}$ deforms the entire geometry according to the
  displacements of the lattice control points, and finally $\psi^{-1}$
maps back the reference domain to the physical one.}
\label{fig:ffd_scheme}
\end{figure}

So, recalling Eq.~\eqref{eq:general_morphing}, we have the explicit
map $\mathcal{M}$ for the FFD, that is the composition of the three
maps presented, i.e.
\begin{align}
\mathcal{M} (\x, \mupar) &= (\boldsymbol{\psi}^{-1} \circ \widehat{T}
\circ \boldsymbol{\psi}) (\x, \mupar) =\\
&= \boldsymbol{\psi}^{-1} \left( \sum_{l=0} ^L \sum_{m=0}
  ^M \sum_{n=0} ^N b_{lmn}(\boldsymbol{\psi}(\x))
  \boldsymbol{P}_{lmn}^0 \left(\mupar_{lmn}\right) \right)
\quad \forall \x \in \Omega, 
\end{align}
where $b_{lmn}$ are Bernstein polynomials of degree $l$, $m$,
$n$ in each direction, respectively, $\boldsymbol{P}_{lmn}^0
\left(\mupar_{lmn}\right) =
\boldsymbol{P}_{lmn} + \mupar_{lmn}$, with
$\boldsymbol{P}_{lmn}$ representing the coordinates of the control
point identified by the three indices $l$, $m$, $n$ in the lattice of
FFD control points. In an offline-online fashion, for a given $\x$,
terms $\{b_{lmn}(\boldsymbol{\psi}(\x))\}_{l,m,n}$ can be precomputed during
the offline stage, resulting in an inexpensive linear combination of
$\x$-dependent precomputed quantities and $\mupar$-dependent control points
locations $\{\boldsymbol{P}_{lmn}^0 \left(\mupar_{lmn}\right)\}_{l,m,n}$.
The application of $\boldsymbol{\psi}^{-1}$ does not hinder such offline-online
approach as $\boldsymbol{\psi}$ is affine.

We can notice that the deformation does not depend on the topology of
the object to be morphed, so this technique is very versatile and
non-intrusive, especially for complex geometries or in industrial
contexts (see e.g. \cite{salmoiraghi2016advances,rozza2018advances}).

In the case where the deformation has to satisfy some constraints,
like for example continuity constraints, it is possible to increase
the number of control points. Often it is the case where at the
interface between the undeformed portion of the geometry and the
morphed area the continuity has to be prescribed for physical
reasons. 

As an example, in Figure~\ref{fig:bow_ffd} we present a free form
deformation of a bulbous bow, where an STL file of a complete hull is
morphed continuously by the displacement of only some control
points.

\begin{figure}[h!]
\centering
\includegraphics[trim=200 0 200 0, width=0.7\textwidth]{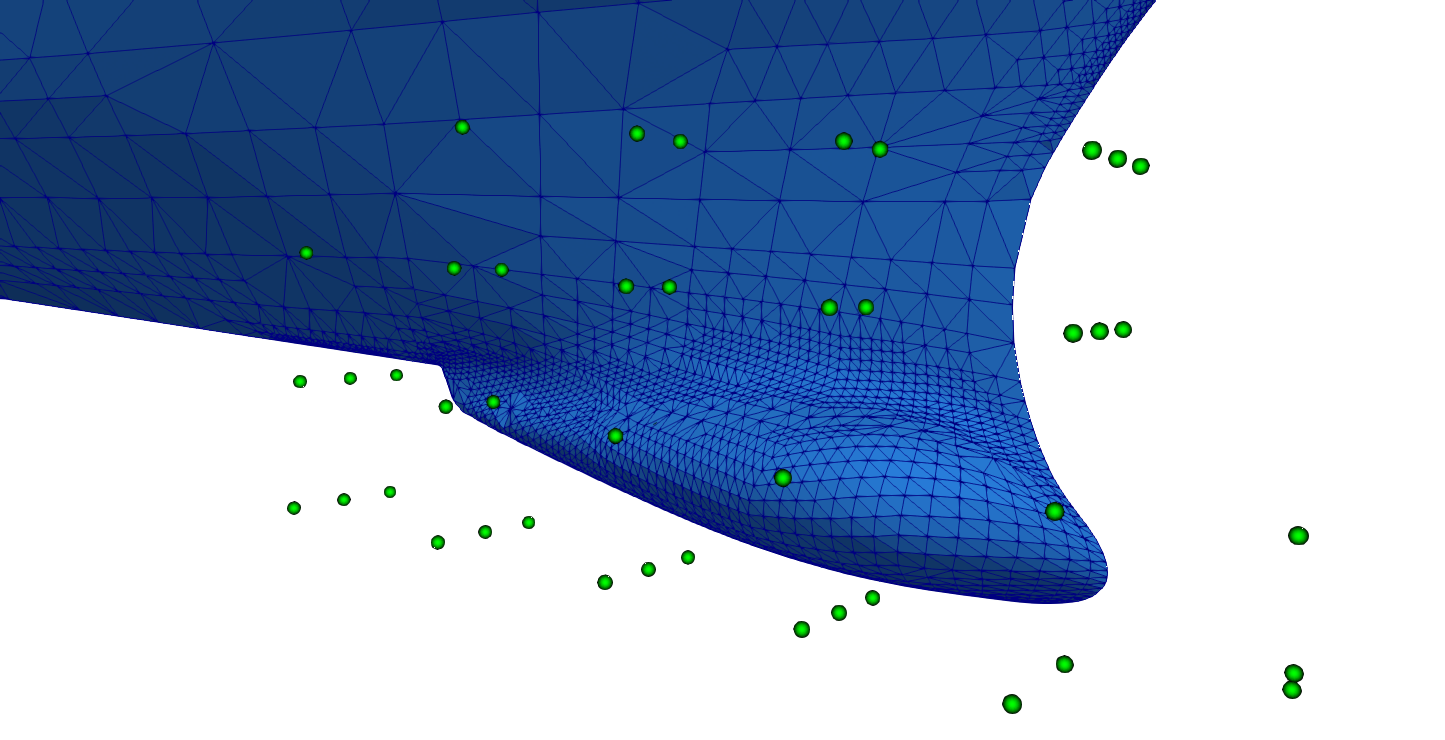}
\caption{Bulbous bow deformation using FFD. In green the FFD control
  points defining the morphing.}
\label{fig:bow_ffd}
\end{figure}

\subsection{Radial basis functions interpolation}
\label{sec:rbf}

Radial Basis Functions (RBF) represent a powerful tool for
nonlinear multivariate approximation, interpolation between
nonconforming meshes~(\cite{deparis2014rescaled}), and for shape
parametrization due to its approximation properties
(see~\cite{buhmann2003radial}).

A radial basis function is any smooth real-valued function
$\widetilde{\varphi}: \mathbb{R}^d \to \mathbb{R}$ 
such that it exists $\varphi: \mathbb{R}^+ \to \mathbb{R}$ and
$\widetilde{\varphi} (\x) = \varphi (\| \x \|)$, where $\| \cdot \|$
indicates the Euclidean norm in $\mathbb{R}^d$. The 
most widespread radial basis functions are the following:\\
\begin{itemize}
\item Gaussian splines~(\cite{buhmann2003radial}) defined as
  \[
     \varphi(\| \x \|) = e^{-\| \x \|^2/R};
  \]
\item thin plate splines~(\cite{duchon1977splines}) defined as
  \[
    \varphi(\| \x \|) = \left( \frac{\| \x \|}{R} \right)^2
    \ln \left( \frac{\| \x \|}{R} \right);
  \]
\item Beckert and Wendland $C^2$
  basis~(\cite{beckert2001multivariate}) defined as
  \[
    \varphi(\| \x \|) = \left ( 1 - \frac{\| \x \|}{R} \right)^4_+ \left ( 4
      \frac{\| \x \|}{R} + 1 \right);
  \]
\item multi-quadratic biharmonic
  splines~(\cite{sandwell1987biharmonic}) defined as 
  \[
    \varphi(\| \x \|) = \sqrt{\| \x \|^2 + R^2};
  \]
\item inverted multi-quadratic biharmonic
  splines~(\cite{buhmann2003radial}) defined as 
  \[
    \varphi(\| \x \|) = \frac{1}{\sqrt{\| \x \|^2 + R^2}};
  \]
\end{itemize}
where $R > 0$ is a given radius, and the subscript $_+$ indicates the
positive part.

Following \cite{morris2008cfd,manzoni2012model}, given $\mathcal{N}_C$ control points situated on the surface of the
body to morph, we can generate a deformation by moving some of these
points and imposing the new surface which interpolates them. The
displacements of the control points represent the geometrical
parameters $\mupar$. 

\begin{figure}[h!]
\centering
\includegraphics[trim=400 0 450 0, width=0.45\textwidth]{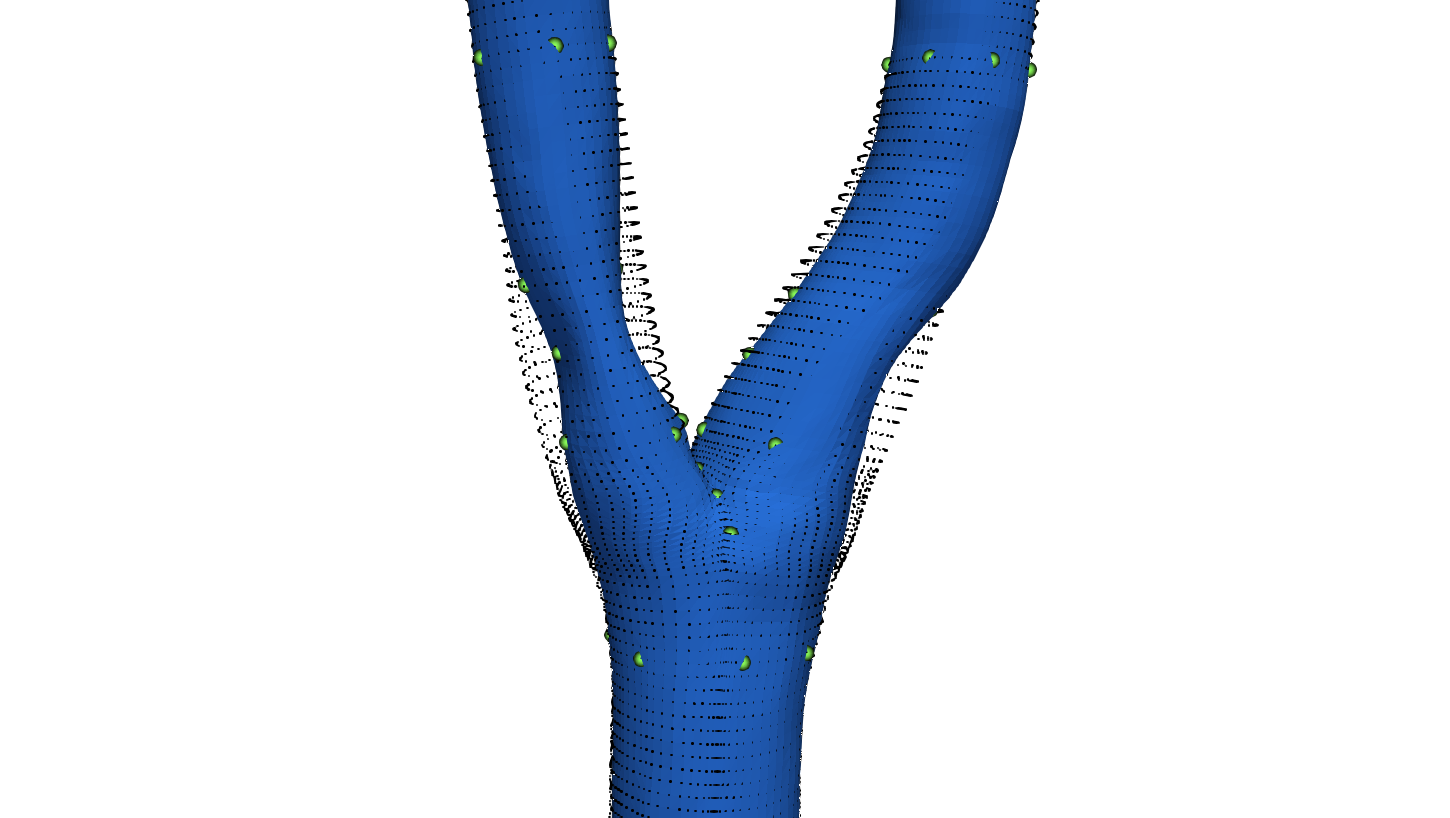}\hfill
\includegraphics[trim=450 0 400 0, width=0.45\textwidth]{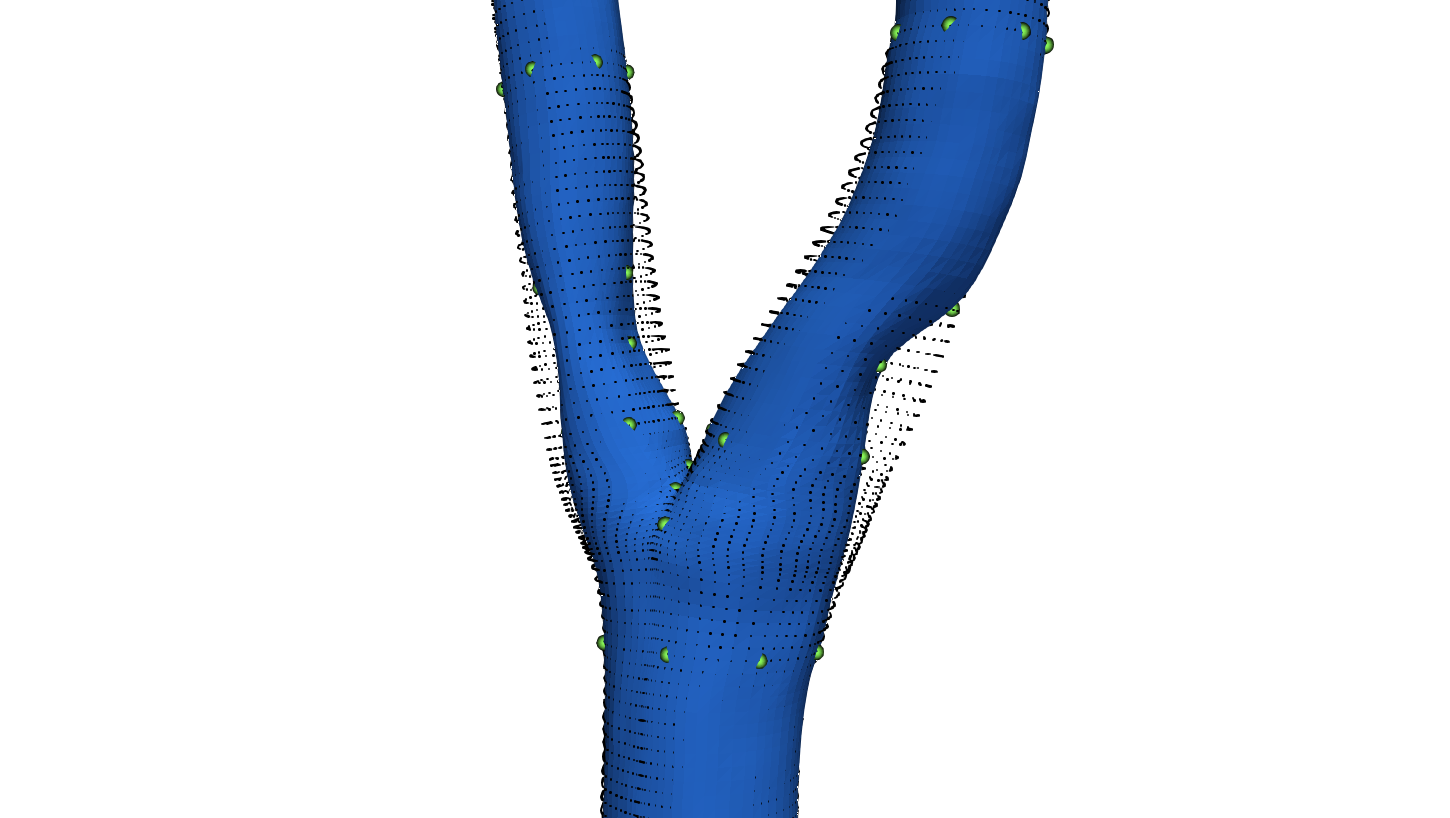}
\caption{Two different views of the same deformed carotid artery model
  using the radial basis functions (RBF) interpolation technique. The green
  dots indicate the RBF control points that define the morphing. The
  black small points highlight the original undeformed geometry. The
  occlusion of the two branches is achieved through a displacement
  along the normal direction with respect to the carotid surface of
  the control points after the bifurcation.}
\label{fig:carotid_def}
\end{figure}

We can now define the map $\mathcal{M}$ in
Eq.~\eqref{eq:general_morphing} for the RBF interpolation technique,
that is 
\begin{equation}
\label{eq:rbf_map}
	\mathcal{M}(\x; \mupar) = q(\x; \mupar) + 
	\sum_{i=1}^{\mathcal{N}_C} \gamma_i(\mupar)\;
	\varphi(\| \x - \x_{C_i} \|),
\end{equation}
where $q(\x; \mupar)$ is a polynomial term, generally of degree 1,
$\gamma_i(\mupar)$ is the weight associated to the basis function
$\varphi_i$, and $\{\x_{C_i}\}_{i=1}^{\mathcal{N}_C}$ are control
points selected by the user (denoted by spherical green markers in
Figure \ref{fig:carotid_def}), and $\x \in \Omega$. We underline
that in the three dimensional case~\eqref{eq:rbf_map} has $d \times
\mathcal{N}_C + d + d^2$ unknowns, which are $d \times \mathcal{N}_C$ for
the $\gamma_i$ and $d + d^2$ for the polynomial term $q(\x; \mupar) =
c(\mupar) + \mathbf{Q} (\mupar) \x$. To this end we impose the
interpolatory constraint
\begin{equation}
\mathcal{M}(\x_{C_i}; \mupar) = \y_{C_i}(\mupar) \qquad \forall i \in
\{1, \dots, \mathcal{N}_C\},
\end{equation}
where $\y_{C_i}$ are the deformed control points obtained applying the
displacement $\mupar$ to $\x_{C_i}$, in particular
\begin{align}
\x_{C} &= [\x_{C_1}, \dots,
\x_{C_{\mathcal{N}_C}}] \in \mathbb{R}^{\mathcal{N}_C \times d},\\
\y_{C}(\mupar) &= [\y_{C_1}(\mupar), \dots,
\y_{C_{\mathcal{N}_C}}(\mupar)] \in \mathbb{R}^{\mathcal{N}_C \times
  d}.
\end{align}
For the remaining $d + d^2$ unknowns, due to the presence of the polynomial
term, we complete the system with additional constraints that
represent the conservation of the total force and
momentum (see~\cite{buhmann2003radial,morris2008cfd}), as follows 
\begin{equation}
\sum_{i=1}^{\mathcal{N}_C} \gamma_i(\mupar) = 0,
\end{equation}
\begin{equation}
\sum_{i=1}^{\mathcal{N}_C} \gamma_i(\mupar) [\x_{C_i}]_1 = 0, \hdots
\sum_{i=1}^{\mathcal{N}_C} \gamma_i(\mupar) [\x_{C_i}]_d = 0,
\end{equation}
where the notation $[\x]_d$ denotes the $d$-th component of the vector $\x$.

Following an offline-online strategy, for a given $\x$, evaluation of $\varphi(\| \x - \x_{C_i} \|)$, $i = 1, \hdots, \mathcal{N}_C$
can be precomputed in the offline stage. Further online effort is only required for (i) given $\mupar$, solve a $d \times
\mathcal{N}_C + d + d^2$ linear system, and, (ii) given $\mupar$ and $\x$, perform linear combinations and the matrix vector product
in \eqref{eq:rbf_map} employing either precomputed quantities or coefficients from (i).

\subsection{Inverse distance weighting interpolation}
\label{sec:idw}

The inverse distance weighting (IDW) method has been proposed in \cite{shepard1968} to deal with 
interpolation of scattered data. We follow \cite{witteveenbijl2009,forti2014efficient,BallarinDAmarioPerottoRozza2017}
for its presentation and the application of IDW to shape parametrization.

As in the previous section, let $\{\x_{C_k}\}_{k=1}^{\mathcal{N}_c} \subset \mathbb{R}^d$ be a set of control points.
The IDW interpolant $\Pi_{\rm IDW}(f)$ of a scalar function $f: \mathbb{R}^d \to \mathbb{R}$ is defined as
\begin{equation}\label{IDW_formula}
\Pi_{\rm IDW}(f)(\x) =
\sum _{k=1}^{\mathcal{N}_c}  w_k(\x) \, f(\x_{C_k})\quad \x\in \Omega,
\end{equation}
where the weight functions $w_k: \Omega \to \mathbb{R}$, for $k=1, \ldots, {\mathcal{N}_c}$ are given by
\begin{equation}\label{weighting_function}
w_k(\x) = \begin{cases}
\frac{\displaystyle\| \x - \x_{C_k} \|^{-s}}{\displaystyle\sum_{j=1}^{\mathcal{N}_c}{\| \x - \x_{C_j} \|^{-s}} } & \text{ if } \x \neq \x_{C_k},\\[2mm]
1 & \text{ if } \x = \x_{C_k},\\
0 & \text{ otherwise}.
\end{cases}
\end{equation}
$s$ is a positive integer, modelling the assumption
that the influence of the $k$-th control point 
$\x_{C_k}$ on $\x$ diminishes with rate $-s$
as the distance between $\x$ and $\x_{C_k}$ increases.
IDW interpolation trivially extends to vector functions $\boldsymbol{f}: \mathbb{R}^d \to \mathbb{R}^d$ by application to each component $f_1, \hdots, f_d$, where the weight functions $w_k: \Omega \to \mathbb{R}$ do not depend on the specific component.

In the case of IDW shape parametrization, for any given $\mupar$, the deformed position of the control points $\{\x_{C_k}\}_{k=1}^{\mathcal{N}_c}$ is supposed to be known, and equal to $\y_{C_k}(\boldsymbol\mu) := \boldsymbol{f}(\x_{C_k})$ for $k=1, \hdots, \mathcal{N}_c$. We remark that the analytic expression of $\boldsymbol{f}$ is not known, but only its action through $\{\x_{C_k}\}_{k=1}^{\mathcal{N}_c}$. This is indeed the minimum requirement to properly define \eqref{IDW_formula}. The deformation map is therefore
\begin{equation*}
	\mathcal{M}(\x; \mupar) = \sum _{k=1}^{\mathcal{N}_c}  w_k(\x) \, \y_{C_k}(\mupar) \quad\forall \x\in \Omega,
\end{equation*}
In an offline-online separation effort, efficient deformation can be obtained by noting that the $\mupar$ dependent part is decoupled from the
$\x$-dependent weight function $w_k(\x)$. Thus, for any $\x$, weight terms can be precomputed once and for all and stored. The online cost
of the evaluation of $\mathcal{M}(\x; \mupar)$ thus requires an inexpensive linear combination of $\x$-dependent precomputed quantities
and $\mupar$-dependent control points locations. We remark that, in contrast, the RBF approach (even though still based on interpolation)
required a further solution of linear system of size $d \times \mathcal{N}_C + d + d^2$.

Application in the context of fluid-structure interaction (FSI) problems between a wing (structure) and surrounding air (fluid) is shown in Figure~\ref{fig:idw}. The IDW deformation of the fluid mesh resulting from a vertical displacement of the tip of the wing is depicted; the structural mesh is omitted from the picture. We refer to \cite{BallarinDAmarioPerottoRozza2017} for more details.

\begin{figure}[h!]
\centering
\includegraphics[width=0.7\textwidth]{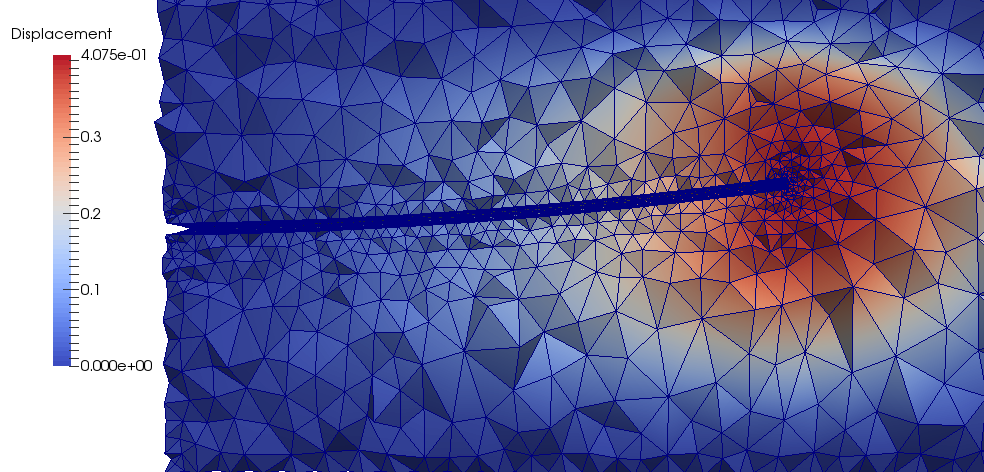}
\caption{Deformation of the fluid mesh of a fluid-structure interaction problem by IDW.}
\label{fig:idw}
\end{figure}

\section{Beyond affinity assumptions: parametric interpolation}
\label{sec:nonaffinity}
\def\plist@algorithm{\algorithmname\space}

We describe here several options to deal with cases when an exact affine decomposition of the discretized differential operators, right hand sides or outputs of interest is not existing. The section begins with a brief overview concerning the description of general non-affine problems \ref{subsec:nonaffine} and later we describe the so-called empirical interpolation method (EIM) family of algorithms. This methodology becomes particularly useful to obtain an efficient offline-online splitting also in cases with nonlinearities and non-affine parametrization. We provide a full description of the different alternatives, starting from its standard continuous version (EIM), and presenting also its discrete (DEIM) and matrix (M-DEIM) variants. The methodologies are tested for both non-affine and non-linear problems. In section \ref{subsec:EIM} we explain in detail the basics of the empirical interpolation method. In section \ref{subsec:DEIM} we introduce the discrete variant of the empirical interpolation method at both matrix and vector level and we mention further options to obtain an approximate affine expansion. In section \ref{subsec:example} we present two examples using the EIM (section \ref{subsubsec:ex1}) and the M-DEIM algorithm to deal with both non-affinity and non-linearity (section \ref{subsubsec:ex2}).

\subsection{Non-affine problems}\label{subsec:nonaffine}
As already discussed in \ref{subsec:affine}, the existence of an affine decomposition of the linear and bilinear forms of the considered problem is crucial in order to obtain a computationally efficient framework, see \eqref{chap_1_1:affine_param_dep_bilifo}-\eqref{chap_1_1:affine_param_dep_bilifo_l}.

This assumption fails to be true in several situations. Such situations occur for example in case of problems with non-affine parametric-dependency, in cases with non-linear differential operators and in cases dealing with the non-affine geometrical parametrizations introduced in section \ref{sec:geom}. 

In fact, in these situations, the differential operators or the right-hand sides or the outputs of interest cannot be directly written using an exact affine decomposition and we have therefore to rely on an approximate affine decomposition. The Empirical Interpolation Method (EIM) is one of the key instruments to recover an approximate affine decomposition.

The EIM is a general tool for the approximation of parameterized or non-linear functions by a sum of affine terms. In the expression below we report an example for a generic parameterized function $f$:
\begin{equation}
f(\bm x;\bm \mu) \approx \sum_{q=1}^Q c_q(\bm \mu)h_q(\bm x).
\end{equation}
The EIM has been firstly proposed in~\cite{Barrault2004} to deal with non-affine problems in the context of RB methods and later applied to reduced order modelling in~\cite{Grepl2007}. In \cite{Maday2008} it has been extended to a general context, a slightly different variant of EIM namely Discrete Empirical Interpolation Method has been firstly proposed in \cite{Chaturantabut_2009,Chaturantabut2010}. For more details on the a posteriori error analysis the interested reader may see \cite{Grepl2007,Eftang2010,Chen_2014} while for an extension to $hp$-adaptive EIM we refer to \cite{Eftang_2012}. A generalization of the EIM family of algorithms has been proposed in \cite{Maday2013,Chen_2014,Maday2016} while a nonintrusive EIM technique is presented in \cite{Casenave_2014} and an extension with special focus on high dimensional parameter spaces is given in \cite{Hesthaven_2014}. 

\subsection{The empirical interpolation method (EIM)}\label{subsec:EIM}
The EIM is a general method to approximate a parametrized function $f (\bm x;\bm \mu) :\Omega \times \peim \to \mathbb{R}$ by a linear combination of $Q$ precomputed basis functions in the case where each function $f_\mu:=(\cdot;\mupar)$ belongs to some Banach space $\mathcal{X}_\Omega$. In what follows $\mupar\in\peim$ is the parameter vector and $\peim$ is the parameter space. The EIM approximation is based on an interpolation operator $\mathrm{I}_Q$ that interpolates the given function $f_\mu$ in a set of interpolation points $\{\bm{x}_i\}_{i=1}^Q \in \Omega$.  The interpolant function is constructed as a linear combination of hierarchically chosen basis functions $\{h_i\}_{i=1}^Q \in \mathbb{V}_{EIM}$, where $\mathbb{V}_{EIM}$ is an approximation of the function space $\mathcal{U}$ that contains $f$, i.e. $\mathbb{V}_{EIM}\subseteq \mathcal{U}$. On the contrary to other interpolation methods, that usually work with generic and multi-purpose basis functions such as polynomial functions, the EIM works with problem-specific basis functions with global support and selected hierarchically. The interpolant function can be then expressed by:
\begin{equation}
\mathrm{I}_Q \left[ f_{\mu} \right] (\bm x) = \sum_{q=1}^Q c_q(\bm \mu)h_q(\bm x), \quad \bm x \in \Omega, \quad \mupar\in\mathcal{P}_{EIM},
\end{equation}
where $c_q$ are parameter dependent coefficients. Once the basis functions $h_q(\bm x)$ are set, the problem of finding the coefficients $c_q(\bm\mu)$ is solved imposing the interpolation condition, i.e.:
\begin{equation}
\mathrm{I}_Q \left[ f_\mu \right] (\bm x_q) = \sum_{q=1}^Q c_q(\bm \mu)h_q(\bm x_q) = f_\mu(\bm x_q), \quad q=1, \dots ,Q.
\end{equation}

The above problem can be recast in matrix form as $\bm T \bm c_\mu = \bm f_\mu$ with:
\begin{equation}\label{eq:inter_matrix}
(\bm T)_{ij} = h_j(\bm x_i), \quad (\bm c_\mu)_j = c_j(\bm \mu), \quad (\bm f (\bm \mu))_j = f(\bm x_i; \bm \mu), \quad i,j = 1,\dots,Q.
\end{equation}

This problem can be easily solved given the fact that the basis functions $h_q(\bm x)$ and the interpolation points $\bm x_q$ are known and that the matrix $\bm T$ is invertible.

The selection of the basis functions $\{h_q\}_{q=1}^Q$ and of the interpolation points $\{\bm x_q\}_{q=1}^Q$, that are defined by a linear combination of selected function realizations $\{f_{\mu_i}\}_{i=1}^Q$, is done following a greedy approach similar to the one presented in Subsection~\ref{subsec:greedy} (see~Algorithm \ref{alg:eim}). The procedure provides also a set of sample points $\{\mupar_q\}_{q=1}^Q$ that are required for the construction of the basis functions.

Since the basis functions are defined as linear combinations of the function realizations inside the parameter space, in order to approximate the function $f$ with a relatively small number of basis functions $h_q$, the manifold:
\begin{equation}
\mathcal M_{\mathrm{EIM}} = \{ f(\bm x; \mupar) | \mupar \in \peim\},
\end{equation}
must have a small Kolmogorov N-width \cite{Kolmogoroff1936}.

Once a proper norm on $\Omega$ has been defined, and here we consider $L^p(\Omega)$-norms for $1\leq p\leq \infty$, the procedure starts with the selection of the first parameter sample which is computed as:
$$
\bm \mu_1 = \argsup_{\bm \mu \in \peim} \norm{f_\mu(\bm x)}_{L^p(\Omega)},
$$
while the first interpolation point is computed as:
$$
\bm x_1 = \argsup_{\bm x \in \Omega}|f_{\mu_1}(\bm x)|.
$$
The first basis function and the interpolation operator at this stage are then defined as:
$$
h_1(x) = \frac{f_{\mu_1}(\bm x)}{f_{\mu_1}(\bm x_1)}, \quad \mathrm{I}_1[f_\mu](\bm x) = f(\bm x_1;\mupar)h_1(\bm x). 
$$
At the subsequent steps, the next basis function is selected as the one that is the worse approximated by the current interpolation operator and using a similar concept the interpolation point, often referred as \emph{magic point}, is the one where the interpolation error is maximized. In mathematical terms, at the step $k$, the sample point is selected as the one that maximizes the error between the function $f$ and the interpolation operator computed at the previous step $\mathrm{I}_{k-1}[f]$:
$$
\mupar_{k} = \argsup_{\bm \mu \in \peim} \norm{f_\mu(\bm x)-\mathrm{I}_{k-1} \left[ f_{\mu} \right](\bm x)}_{L^p(\Omega)}.
$$
Once the sample point has been determined, the interpolation point is selected, in a similar fashion, as the point inside the domain that maximizes the error between the function $f$ and the interpolation operator:
$$
\bm x_k = \argsup_{\bm x \in \Omega}|f_{\mu_k}(\bm x) - \mathrm{I}_{k-1} \left[ f_{\mu_k} \right](\bm x)|.
$$
The next basis function is defined similarly to the first one with:
$$
h_k(\bm x) = \frac{f_{\mu_k}(\bm x) - \mathrm{I}_{k-1}[ f_{\mu_k} ](\bm x)}{f_{\mu_k}(\bm x_k) - \mathrm{I}_{k-1}[ f_{\mu_k} ](\bm x_k;)}
$$
The procedure is repeated until a certain tolerance $\mathrm{tol}$ is reached or a maximum number of terms $N_{max}$ are computed (see Algorithm~\ref{alg:eim}).
We remark that by construction the basis functions $\{ h_1, \dots, h_Q \}$ and the functions $\{ f_{\mu_1}, \dots,f_{\mu_Q} \}$ span the same space $\mathbb{V}_{EIM}$:
$$
\mathbb{V}_{EIM} = \spn{h_1, \dots, h_Q} = \spn{f_{\mu_1}, \dots,f_{\mu_Q}}.
$$
However, the former are preferred for the following reasons (for more details and for the mathematical proofs we refer to \cite{Barrault2004}):
\begin{itemize}
\item They are linearly independent,
\item $h_i(\bm x_i)=1$ for $1\leq i \leq Q$  and $h_i(\bm x_j) = 0$ for $1\leq i \leq j \leq Q$,
\item They make the interpolation matrix $\bm T$ of Equation \ref{eq:inter_matrix} to be lower triangular and with diagonal elements equal to unity and therefore the matrix is invertible.
\end{itemize} 
The third point implies that the interpolation problem is well-posed.
\begin{algorithm}
\caption{The EIM algorithm - Continuous Version}
\begin{algorithmic} % enter the algorithmic environment
\REQUIRE set of parameterized functions $f_\mu:\Omega \to \mathbb{R}$, tolerance $\mathrm{tol}$ and maximum number of basis functions $N_{max}$, $p$ order of the chosen $p$-norm.
\ENSURE basis functions $\{h_1, ..., h_Q \}$, interpolation points $\{ \bm x_1, \dots,\bm x_Q\}$;
\STATE $k = 1$ ; $\varepsilon = \mathrm{tol}+1$;
\WHILE {$k < N_{max} \mbox{ and } \varepsilon > \mathrm{tol}$}
\STATE Pick the Sample point:\\
$\mupar_{k} = \argsup_{\bm \mu \in \peim} \norm{f_\mu(\bm x)-\mathrm{I}_{k-1} \left[ f_{\mu} \right](\bm x)}_{L^p(\Omega)}$;
\STATE Compute the corresponding interpolation point:\\
$\bm x_k = \argsup_{\bm x \in \Omega}|f_{\mu_k}(\bm x) - \mathrm{I}_{k-1} \left[ f_{\mu_k} \right](\bm x)|$;
\STATE Define the next basis function:\\
$h_k(\bm x) = \frac{f_{\mu_k}(\bm x) - \mathrm{I}_{k-1}[ f_{\mu_k} ](\bm x)}{f_{\mu_k}(\bm x_k) - \mathrm{I}_{k-1}[ f_{\mu_k} ](\bm x_k;)}$;
\STATE Compute the error level: \\
$\varepsilon = \norm{\varepsilon_p}_{L^\infty}$ with $\varepsilon_p (\mupar) =  \norm{f_{\mu}(\bm x)-\mathrm{I}_{k-1}[f_{\mu}](\bm x)}_{L^p(\Omega)}$;
\STATE $k = k+1$;
\ENDWHILE
\end{algorithmic}
\label{alg:eim}
\end{algorithm}
\subsubsection{Error Analysis}
Dealing with interpolation procedures, the error analysis usually involves a Lebesgue constant. In particular, in case one is using the $L^\infty(\Omega)$-norm the error analysis involves the computation of the Lebesgue constant $\Lambda_{q} = \sup_{\bm x \in \Omega}\sum_{i=1}^q |L_i(\bm x)|$ being $L_i \in \mathbb{V}_{EIM}$ Lagrange functions that satisfies $L_i(x_j) = \delta_{ij}$. It can be proved that the interpolation error is bounded by the following expression \cite{Barrault2004}:
\begin{equation}
\norm{f_\mu - \mathrm{I_q}[f_\mu]}_{L^\infty(\Omega)}\leq(1+\Lambda_q) \inf_{v_q \in \mathbb{V}_{EIM}}\norm{f_\mu -v_q}_{L^\infty(\Omega)}.
\end{equation}
An upper bound for the Lebesgue constant, that in practice has been demonstrated to be very conservative \cite{Barrault2004} can be computed as:
$$
\Lambda_q \leq 2^q-1.
$$
For more details concerning the estimates of the interpolation error we refer to \cite{Barrault2004, Maday2008}.
\subsubsection{Practical implementation of the algorithm}
Practically, finding the maximum of Algorithm~\ref{alg:eim} is usually not feasible and therefore the continuous version must be transformed into a computable one. 

This is done selecting a finite dimensional set of training points in the parameter space $\{ \mupar_i\}_{i=1}^N \in \mathcal{P}_{\mathrm{EIM}}^{\mathrm{train}} \subset \mathcal{P_{\mathrm{EIM}}}$ and in the physical domain $\{\bm x_i\}_{i=1}^M \in \Omega_h \subset \Omega$. For this reason we introduce the vector $\bm f:\Omega_h\times\mathcal{P}_{\mathrm{EIM}}^{\mathrm{train}}\to\mathbb{R}^{M}$ which consists into a discrete representation of the function $f$:
\begin{equation}
(\bm{f}_\mu)_i = f_\mu(\bm x_i), \quad i=1,\dots,M.
\end{equation}
We also define the matrix $\bm H_Q \in \mathbb{R}^{M\times Q}$ which is defined by the discrete basis functions $H_Q = [\bm h_1,\dots,\bm h_Q]$ and the interpolation indices vector $\bm i_Q = (i_1, \dots, i_Q)$. The discrete interpolation operator of order $Q$ for the vector function $\bm f$ is then defined by:
\begin{equation}
\mathrm{I}_Q[\bm f_{\mupar}] = \bm{H}_Q \bm a_{\bm f_{\mupar}},
\end{equation}
where the coefficients $\bm a_{\bm f_{\mupar}}$ are defined such that $\bm T \bm a_{\bm f_{\mupar}} = \bm f_{\mupar}$, being:
\begin{equation}
\bm T_{kq} = (\bm H_Q)_{i_k q}, \quad k,q = 1,\dots,Q.
\end{equation}
The implementation of the algorithm is similar to the continuous version and is reported in Algorithm~\ref{alg:eim_pract}. In the algorithm we use the notation $\bm F_{:,j}$ to denote the $j$-th column of the matrix $\bm F$. Where $\bm F\in\mathbb{R}^{M\times N}$ is a matrix containing vector representations of the function $\bm f$:
\begin{equation}
(\bm F)_{ij} = f(\bm x_i; \mupar_j).
\end{equation}
Once the basis and the interpolation indices are defined, during the online stage it is required to make a point-wise evaluation of the $f$ function in the points defined by the interpolation indices. 
\begin{algorithm}[H]
\caption{The EIM algorithm - Practical Implementation}
\begin{algorithmic} % enter the algorithmic environment
\REQUIRE set of parameter samples $\{\mupar_i\}_{i=1}^M \in \mathcal{P}_{\mathrm{EIM}}^{\mathrm{train}} \subset \mathcal{P_{\mathrm{EIM}}}$, set of discrete points $\{\bm x_i\}_{i=1}^N \in \Omega^{\mathrm{train}}$, tolerance $\mathrm{tol}$, maximum number of basis functions $N_{max}$, $p$ order of the chosen $p$-norm.
\ENSURE basis functions matrix $\bm H_Q = \{\bm h_1, ..., \bm h_Q \}$, interpolation indices vector $\bm i_Q=\{i_1, \dots,i_Q\}$;
%\STATE \hspace{-\algorithmicindent}\textbf{OFFLINE STAGE}
\STATE Assemble the matrix:\\
$(\bm F)_{ij} = f(\bm x_i; \mupar_j), \quad i=1,\dots,M, \quad j=1,\dots,N$;
\STATE $k=1, \quad \varepsilon = \mathrm{tol}+1$;
\WHILE {$k<N_{max}$ and $\varepsilon > \mathrm{tol}$}
\STATE Pick the sample index:\\
\STATE 	$j_k = \argmax_{j=1,\dots,M}\norm{\bm F_{:,j}-\mathrm{I}_{k-1}[\bm F_{:,j}]}_{L^p}$;
\STATE and compute the interpolation point index: \\
$i_k = \argmax_{i=1,\dots,N}|\bm F_{i,j_k}-(\mathrm{I}_{k-1}[\bm F_{:,j_k}])_i|$;
\STATE define the next approximation column: \\
$\bm h_k = \frac{\bm F_{:,j_k} - \mathrm{I}_{k-1}[\bm F_{:,j_k}]}{\bm F_{i_k,j_k} - (\mathrm{I}_{k-1}[\bm F_{:,j_k}])_{i_k}}$
\STATE define the error level: \\
$\varepsilon=\max\limits_{j=1,\dots,M}\norm{\bm F_{:,j} - \mathrm{I}_{k-1}[\bm F_{:,j}]}_{L^p}$
\STATE $k = k+1$
\ENDWHILE
%\STATE \hspace{-\algorithmicindent}\textbf{ONLINE STAGE}
\end{algorithmic}
\label{alg:eim_pract}
\end{algorithm}
\subsection{The Discrete Empirical Interpolation Method (DEIM)}\label{subsec:DEIM}
The computable version of EIM is similar to the so-called Discrete Empirical Interpolation Method (DEIM) introduced in \cite{Chaturantabut2010}. The main difference between the EIM and the DEIM is given by the way the basis functions are computed. In the DEIM the basis functions are computed relying on a POD procedure which is performed on a set of discrete snapshots of the parametrized function $\{\bm f_{i}\}_{i=1}^M$. Each snapshot $\bm f_i$ is already considered in discrete form in a prescribed set of points $\{\bm x_i\}_{i=1}^{N_h}$.   
The procedure, which is described in detail in Algorithm~\ref{alg:deim} can be summarized into the following steps:
\begin{enumerate}
\item Construct the DEIM basis functions using a POD procedure on a set of previously computed snapshots: 
\begin{equation}
\bm H_M = [\bm h_1,\dots,\bm h_M] = \mathrm{POD}(\bm f(\mupar_1,\dots,\mupar_M)).
\end{equation}
\item Given a prescribed tolerance $\mathrm{tol}$ determine the indices $
\bm{i}_Q$ and truncate the dimension of the POD space using an iterative greedy approach (see Algorithm~\ref{alg:deim}).
\end{enumerate}
In Algorithm~\ref{alg:deim}, with the term $\bm e_{i_k}$, we identify a vector of dimension $N_h$ where the only non-null element is equal to $1$ and is located at the index $i_k$:
$$
(\bm e_{i_k})_j = 1 \mbox{ for } j=i_k, \quad (\bm e_{i_k})_j = 0 \mbox{ for } j\neq i_k.
$$
During the online stage, when a new value of the parameter $\mupar$ needs to be tested, it is
required to compute the function $\bm f(\mupar)$ only in the location identified by the
indices $\bm i_Q$. Therefore, the nonlinear function needs to be evaluated only in a relatively small number of points which is usually much smaller with respect to the total number of degrees of freedom used to discretize the domain.
\begin{algorithm}
\caption{The DEIM procedure}
\begin{algorithmic}
\REQUIRE {snapshots matrix $\bm S = [\bm f(\mupar_1), \dots, \bm f (\mupar_M)]$ , tolerance $\mathrm{tol}$.}
\ENSURE DEIM basis Functions $\bm H_Q = [\bm h_1, \dots, \bm h_Q]$, Interpolation indices $\bm i_Q = [i_1, \dots, i_Q].$
\STATE compute the DEIM modes $\bm H_M = [\bm h_1, \dots, \bm h_M]=\mathrm{POD}(\bm S)$
\STATE $\varepsilon = \mathrm{tol} +1, \quad k=1$
\STATE $i_1 = \argmax_{j=1,N_h} | (\bm h_1)_j| $
\STATE $\bm H_Q = [\bm h_1], \quad \bm i_Q = [i_1], \quad \bm P = [\bm e_{i_1}]$
\WHILE {$\varepsilon > \mathrm{tol}$}
\STATE $k=k+1$
\STATE Solve $(\bm P^T \bm H_Q)\bm c = \bm P^T \bm h_{k}$ 
\STATE $\bm r = \bm h_{k} - \bm H_Q \bm c$
\STATE $i_k = \argmax_{j=1,N_h} | (\bm r)_j|$
\STATE $\bm H_Q = [\bm H_Q, \bm h_k], \quad \bm P = [\bm P, \bm e_{i_k}], \quad  \bm i_Q = [\bm i_Q, i_k]$ 
\ENDWHILE
\end{algorithmic}
\label{alg:deim}
\end{algorithm}
\subsection{Further options}
\label{subsec:further_options}

Apart from the EIM and the DEIM algorithm further options are available. We mention here the
matrix version of the DEIM algorithm (M-DEIM) \cite{Bonomi2017} that extends the DEIM also to the case of parametrized or non-linear matrices, the generalized empirical interpolation method (GEIM) \cite{Maday2013} and the gappy-POD \cite{BuiThanh2003,Carlberg2010}. 

The M-DEIM is used to perform model order reduction on discretized differential operators characterized by non-linearity or non-affinity with respect to the parameter vector $\mupar$. The algorithm is similar to the one in Algorithm \ref{alg:deim} with the only difference that a vectorized version of the matrices is used to describe snapshots and POD modes. In section \ref{subsec:example} we will provide an example dealing with both issues. 

The gappy-POD generalizes the interpolation condition to the case where the number of basis functions is smaller then the number of interpolation indices, i.e. $\mathrm{card}(\bm H_Q) < \mathrm{card}(\bm i_Q) $. In this case the interpolation condition is substituted by a least-squares regression.

The GEIM replaces the EIM requirement of a point-wise interpolation condition by the statement:
\begin{equation}
\sigma_j(\mathrm I_Q(f(\mupar))) = \sigma_j (f(\mupar)), \quad j=1,\dots,Q,
\end{equation}
where $\sigma_j$ are a set of ``well-chosen'' linear functionals. For more details and for convergence analysis of the present method we refer to \cite{Maday2016}.

\subsection{Some Examples}\label{subsec:example}
In the previous sections we have presented the empirical interpolation method family of algorithm and we have illustrated how it is possible to recover an approximate affine expansion of the discretized differential operators. In this section we show in more detail two examples on the practical application of the EIM and the M-DEIM algorithm. 
\subsubsection{An heat transfer problem with a parametrized non-affine dependency forcing term}\label{subsubsec:ex1}
In this example we illustrate the application of the computable version of the EIM on a steady state heat conduction problem in a two-dimensional square domain $\Omega = [-1,1]^2$ with a parametrized forcing term $g(\mupar)$ and homogeneous Dirichlet boundary conditions on the boundary $\partial \Omega$. The problem is described by the following equation:
\begin{equation}
\begin{cases}
- \alpha_t \Delta \theta = g(\mupar), \quad \mbox{ in } \Omega, \\
\theta = 0, \quad \mbox{ on } \partial \Omega,
\end{cases}
\end{equation}
where $\theta$ is the temperature field, $\alpha_t$ is the thermal conductivity coefficient and $g(\mupar)$ is the parametrized forcing term which is described by the following expression:
\begin{equation}\label{eq:par_function}
g(\bm x; \mupar) = e^{-2(x_1-\mu_1)^2-2(x_2-\mu_2)^2},
\end{equation}
where $\mu_1$ and $\mu_2$ are the first and second components of the parameter vector and $x_1$ and $x_2$ are the horizontal and vertical coordinates respectively. 
Let $V$ be a Hilbert space, the weak formulation of the problem can be written as, find 
$\theta \in V$ such that:
\begin{equation}
a(\theta(\mupar), v; \mupar) = f(v;\mupar), \quad \forall v \in V,
\end{equation}
where the parametrized bilinear and linear forms are expressed by:
\begin{equation}
a(\theta,v;\mupar) = \int_{\Omega} \nabla \theta \cdot \nabla v d \bm x, \quad f(v;\mupar) = \int_{\Omega} g(\bm x;\mupar) v d \bm x.
\end{equation}
In the above expressions, the bilinear form $a(\cdot,\cdot;\mupar): V\times V \to \mathbb{R}$ is trivially affine while for the linear form $f(\cdot;\mupar):V \to \mathbb{R}$ we have to rely on an approximate affine expansion using the empirical interpolation method. The problem is discretized using triangular linear finite elements according to the mesh reported on left side of Figure \ref{fig:par_function_decay}.

In the present case it is not possible to write an exact affine decomposition of the linear form $f$, we rely therefore on the computable version of the empirical interpolation method of Algorithm~\ref{alg:eim_pract} in order to recover an approximate affine expansion.

\begin{figure}[h]
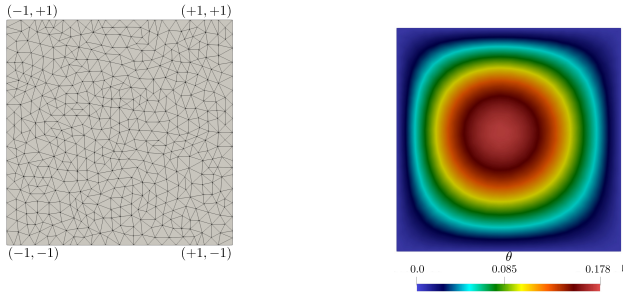

\centering
\begin{minipage}{0.9\textwidth}
\begin{minipage}{0.48\textwidth}
\centering
\includegraphics[width=0.6\textwidth]{mesh_eim-crop.pdf}
\end{minipage}
\begin{minipage}{0.48\textwidth}
\centering
\vspace{0.7cm}
\includegraphics[width=0.6\textwidth]{results-crop.pdf}
\end{minipage}
\end{minipage}
\caption{Discretized domain into which the parameterized problem is solved (left image), together with an example of the value assumed by temperature field for one particular sample point inside the parameter space (right image).}\label{fig:par_function}
\end{figure}

The function $g(\bm x;\mupar)$ is parametrized with the parameter vector $\mupar = (\mu_1,\mu_2) \in \mathcal \peim = [-1,1]^2$ that describes the position of the center of the Gaussian function. The conductivity coefficient $\alpha_t$ is fixed constant and equal to $1$. The testing set for the implementation of the algorithm $\{\mupar_i\}_{i=N_{train}}\in \mathcal{P}_{train}$ is defined using $N_{train}=100$ and a uniform probability distribution. The set of points $\{\bm x_i\}_{i=1}^{N_h}\in \Omega$, that is used for the idenfitication of the magic points, is chosen to be coincident with the nodes of the finite element grid reported in Figure~\ref{fig:par_function}. 
\begin{figure}[h]
\centering
\begin{minipage}{0.8\textwidth}
\begin{minipage}{0.32\textwidth}
\centering
\includegraphics[width=\textwidth]{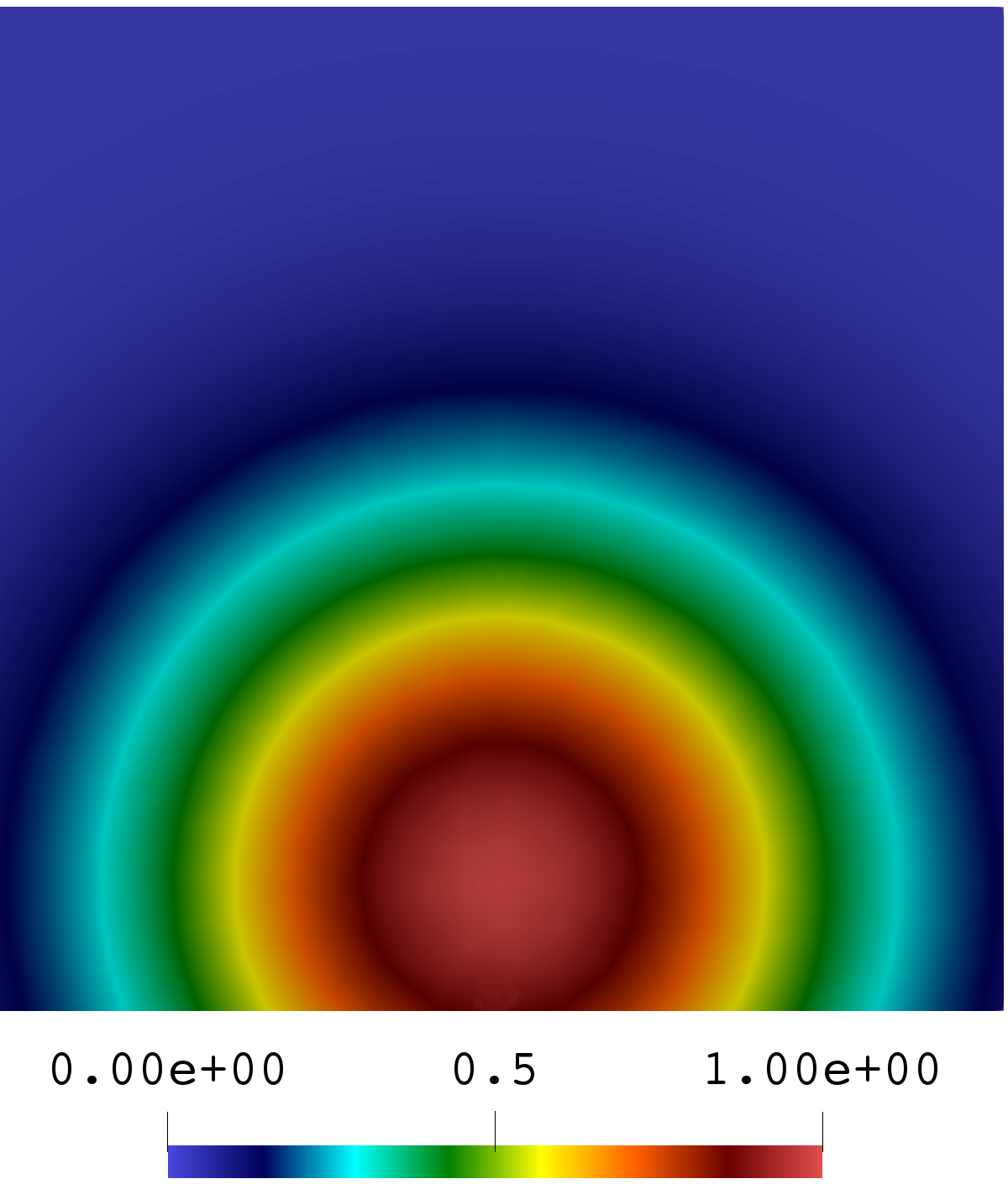}
\end{minipage}
\begin{minipage}{0.32\textwidth}
\centering
\includegraphics[width=\textwidth]{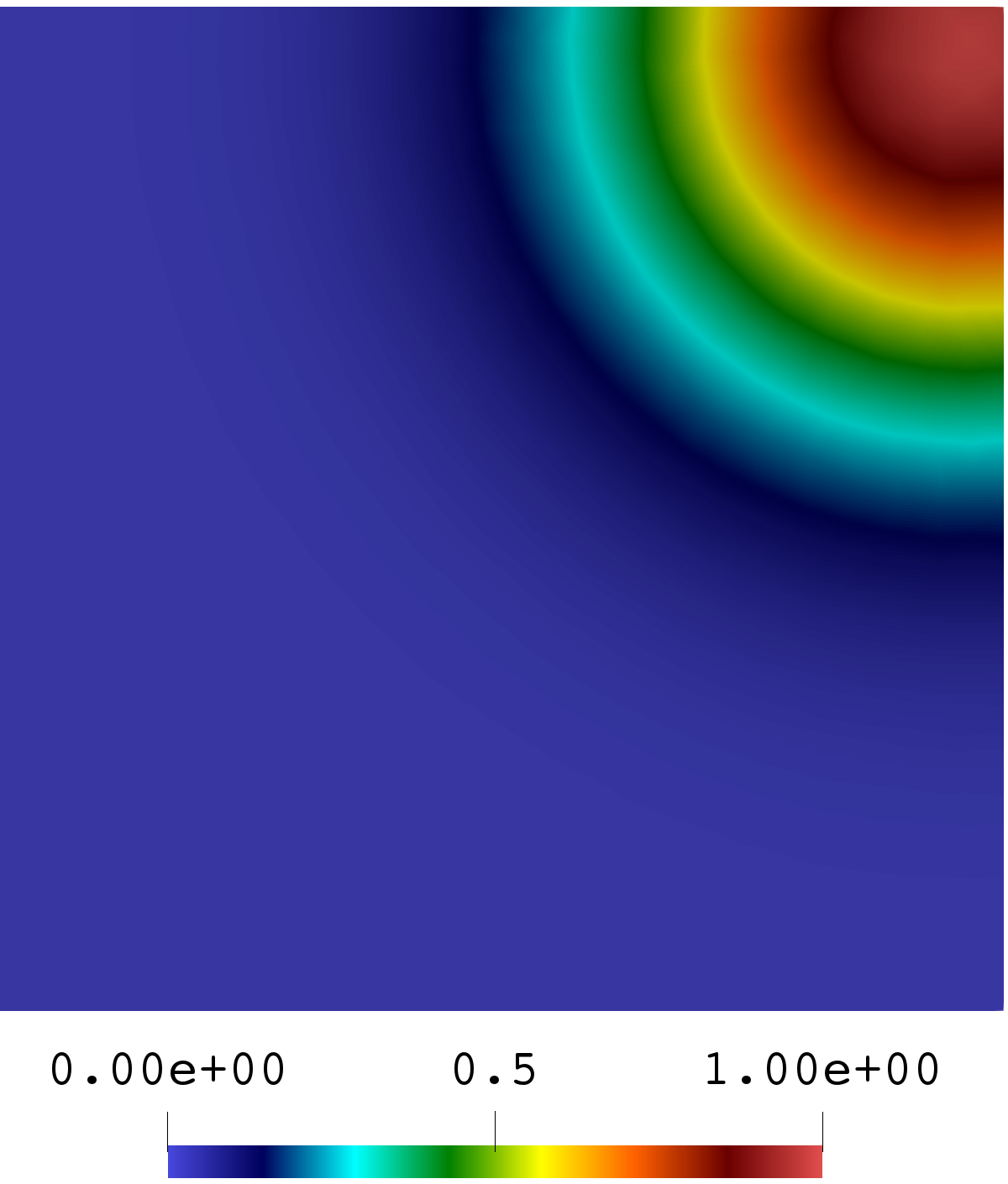}
\end{minipage}
\begin{minipage}{0.32\textwidth}
\centering
\includegraphics[width=\textwidth]{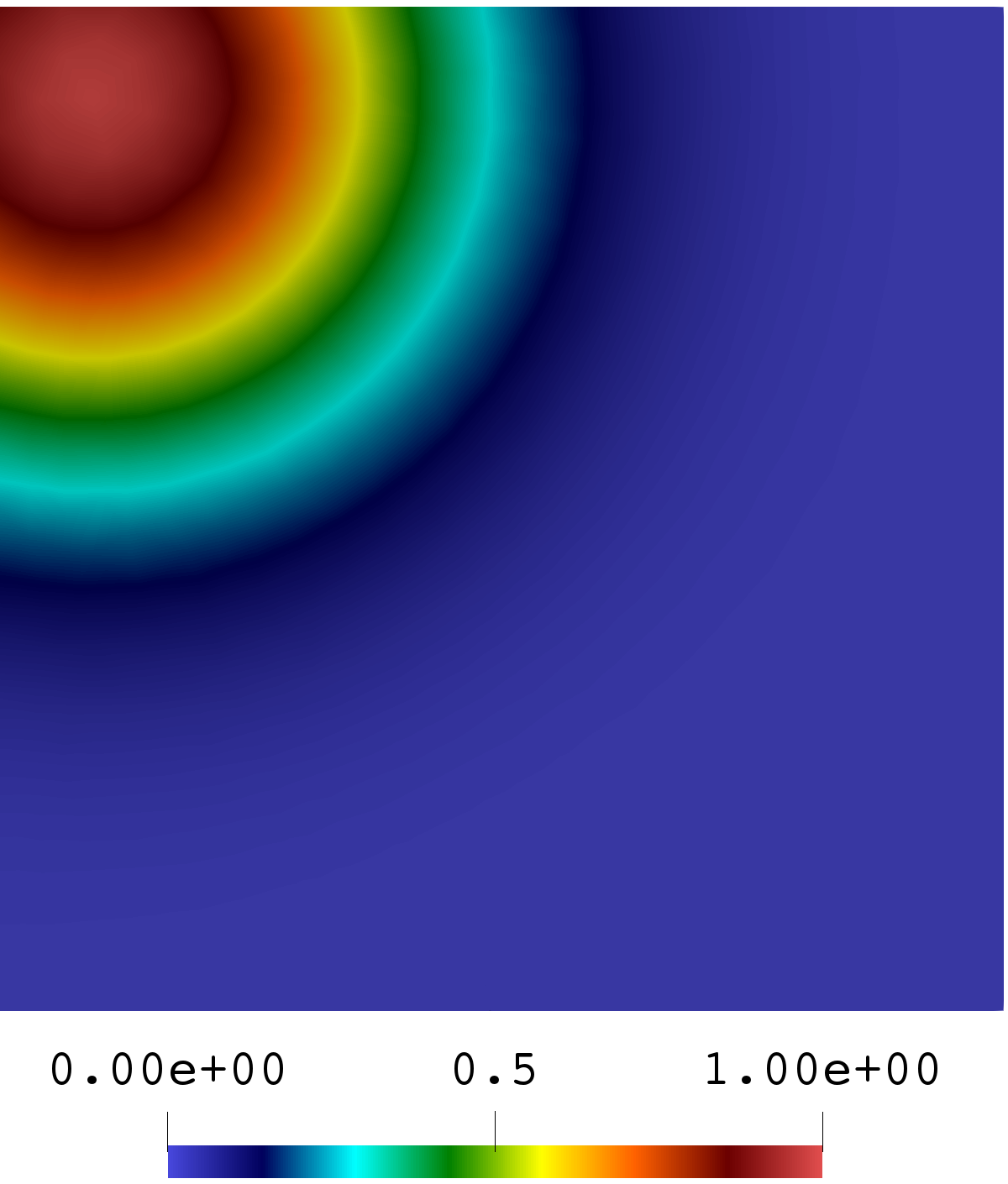}
\end{minipage}
\begin{minipage}{0.32\textwidth}
\centering
\includegraphics[width=\textwidth]{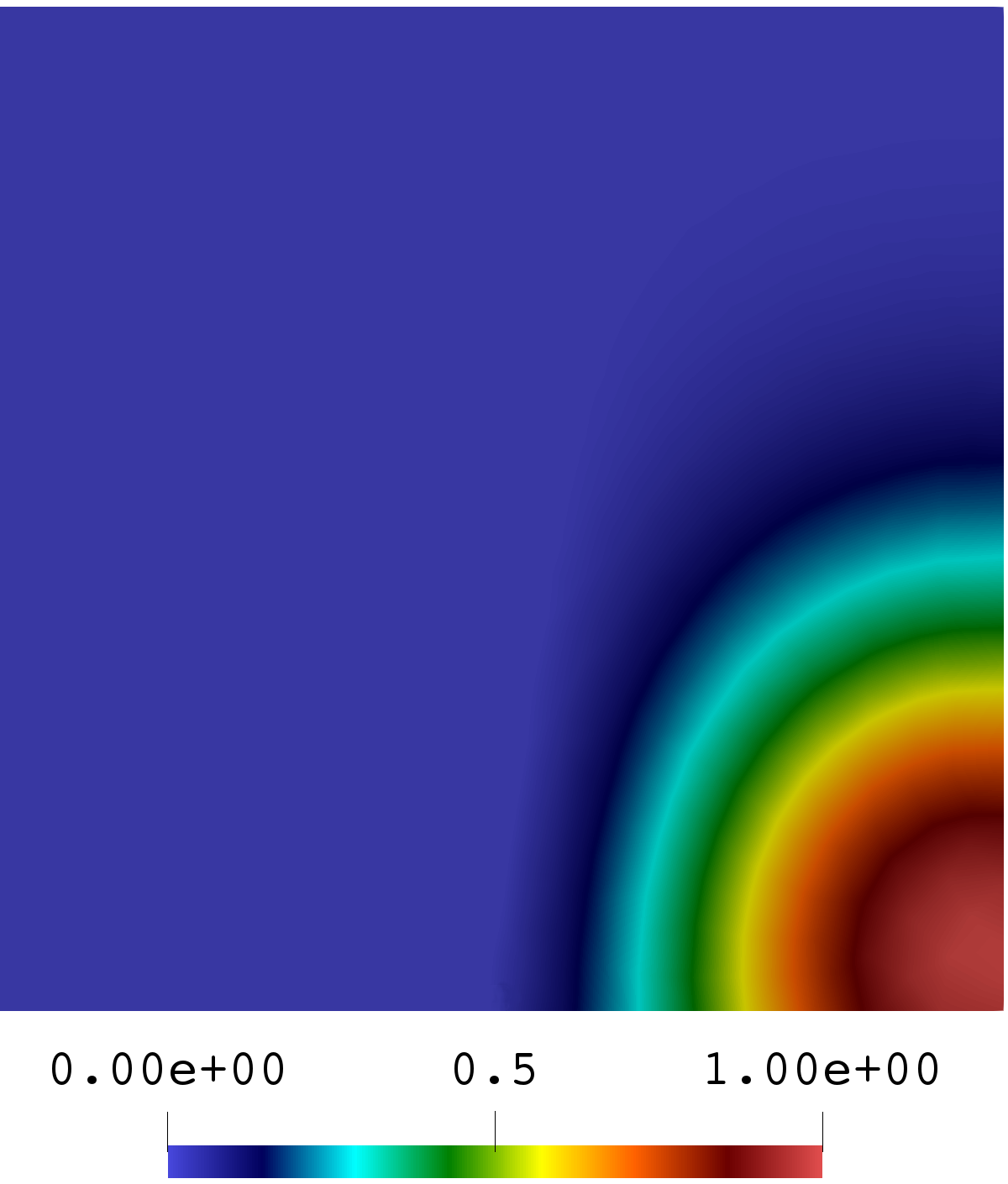}
\end{minipage}
\begin{minipage}{0.32\textwidth}
\centering
\vspace{-0.65cm}
\includegraphics[width=\textwidth]{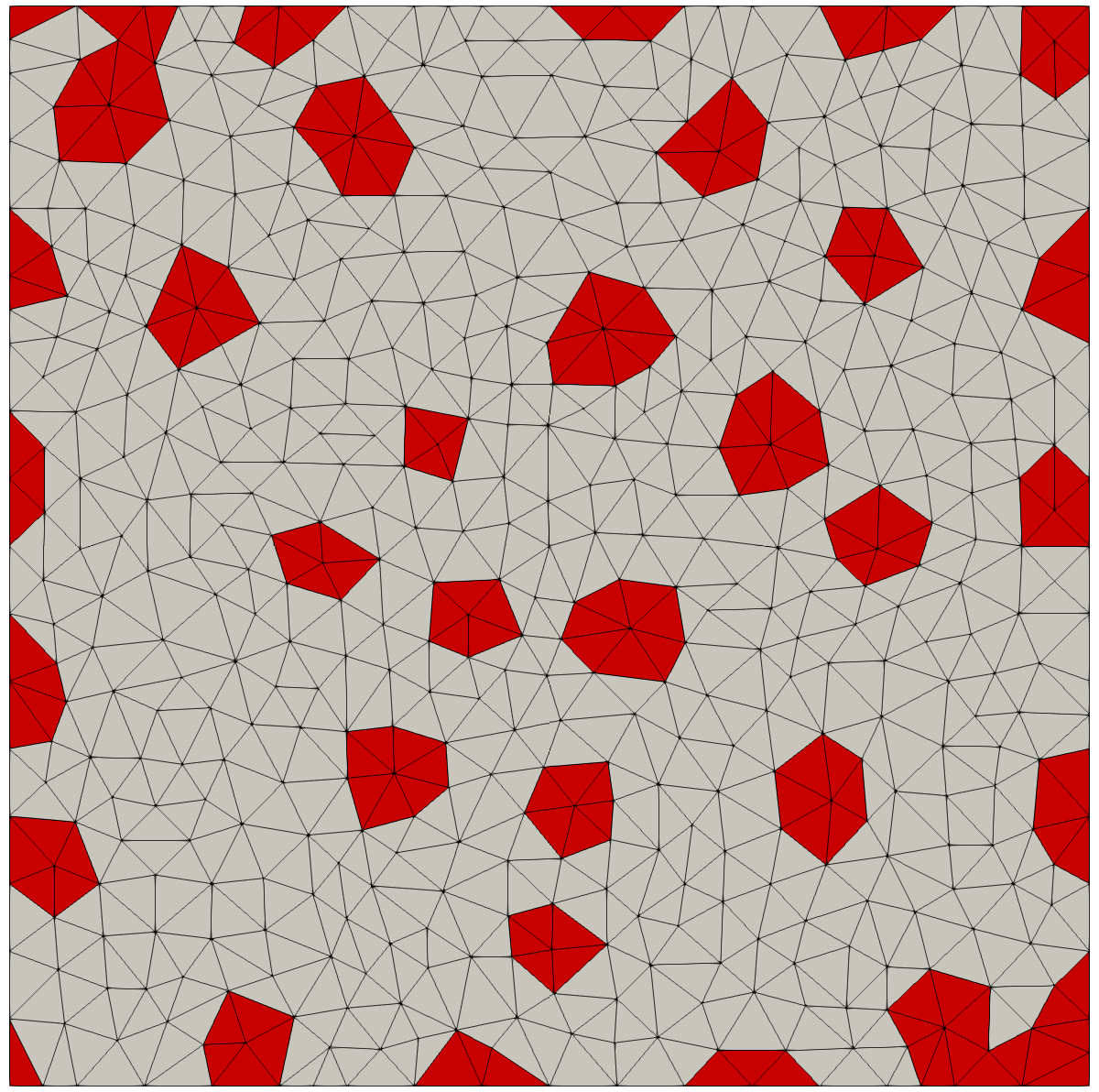}
\end{minipage}
\end{minipage}
\caption{Plot of the first four modes identified by the EIM algorithm (first row and left image in the second row) and the location of the first $35$ indices $i_Q$. The magic points are identified by the red elements in the right picture on the second row.}\label{fig:eimmodes}
\end{figure}
In Figure \ref{fig:eimmodes} we report the first four EIM basis functions for the non-linear function $g$ and the location of the magic points identified by the EIM algorithm. In Figure \ref{fig:par_function_decay} we report the convergence analysis of the EIM algorithm for the nonlinear function $g$ changing the number of EIM basis functions (left plot) and the convergence analysis of the reduced order model changing the number of reduced basis functions (right plot). 

\begin{figure}[H]
\begin{minipage}{0.5\textwidth}
\centering
\includegraphics[width=\textwidth]{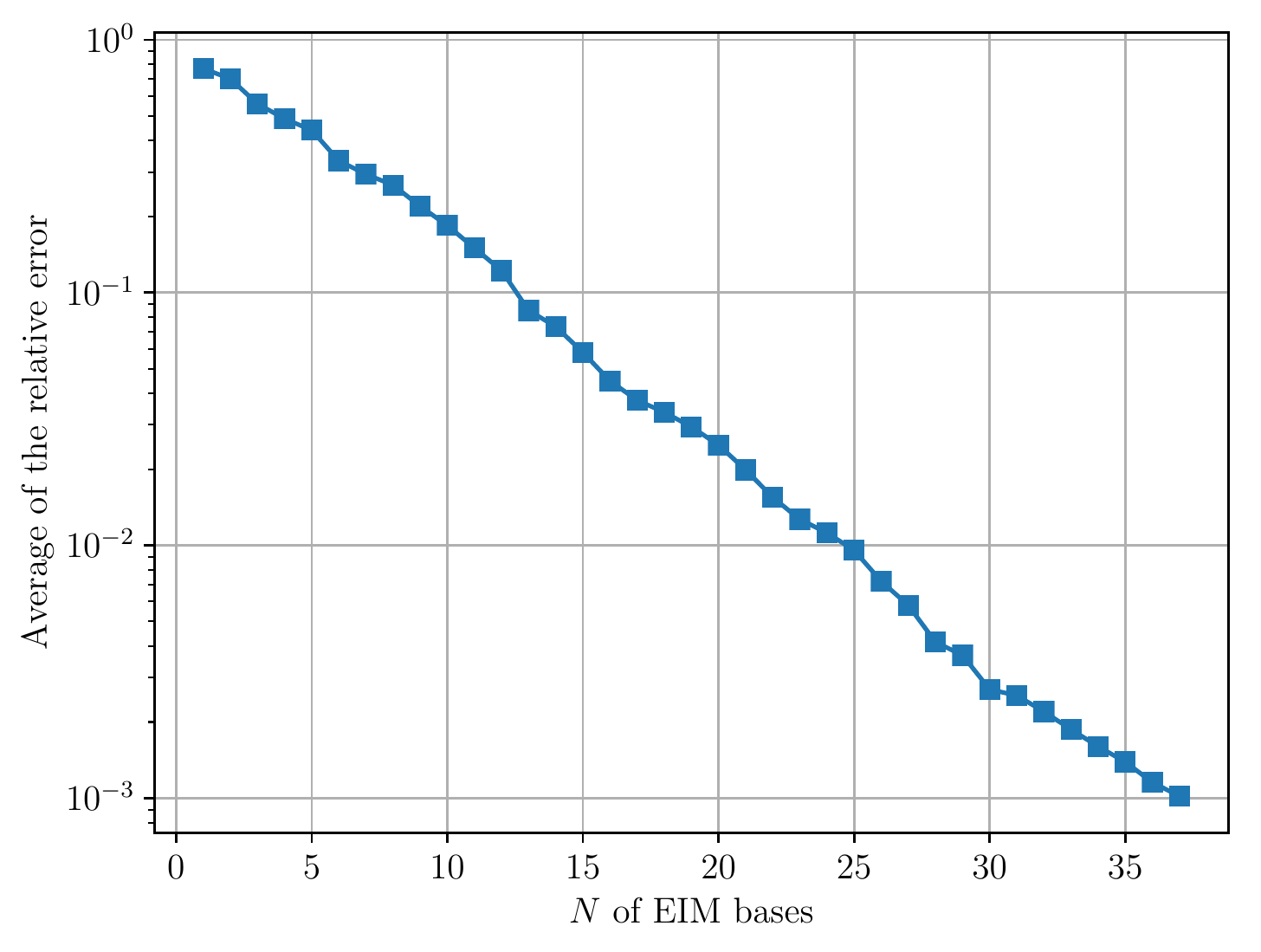}
\end{minipage}
\begin{minipage}{0.5\textwidth}
\centering
\includegraphics[width=\textwidth]{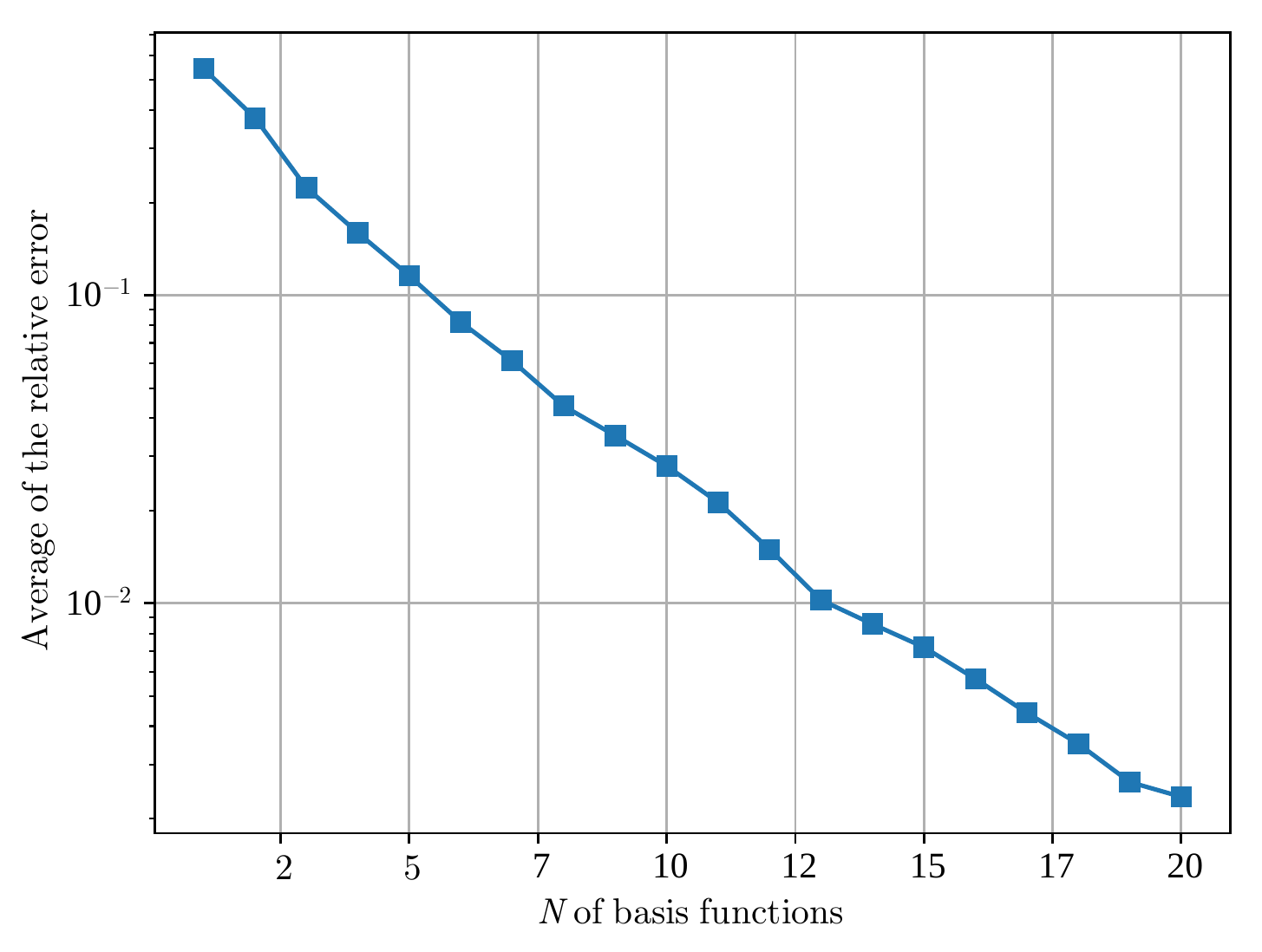}
\end{minipage}
\caption{Convergence analsys of the numerical example. In the left plot we can see the average value of the L2 relative error between the exact function $g$ and its EIM approximation. On the right plot we report the average value of the L2 relative error between the FOM the temparature field and the ROM temperature field. The plot is for different numbers of basis functions used to approximate the temperature field and keeping constant the number of basis functions used to approximate the forcing term ($N=11$)}\label{fig:par_function_decay}
\end{figure}

\subsubsection{An example in the context of reduced order models with non-linearity and non-affine
parametric dependency}\label{subsubsec:ex2}
In this second illustrative example we show the application of the DEIM algorithm to the stationary parametrized Navier-Stokes equations. In the present case we have both non-linearity and non-affinity with respect to the input parameters. Both non-linearity and non-affinity have been tackled using the matrix version of the Discrete Empirical Interpolation Method. The computational domain is given by the unit square $\Omega = [0,1]^2$ and the physical problem is described by the well known Navier-Stokes equations:
\begin{equation}\label{eq:navstokes}
\begin{cases}
 \dive(\bm{u} \otimes \bm{u})- \dive( 2 \nu(\mupar) \bm{\nabla^s} \bm{u} ) =-\bm{\nabla}p, &\mbox{ in } \Omega,\\
\dive \bm{u}=\bm{0}, &\mbox{ in } \Omega,\\
\bm{u} (x) = (1,0), &\mbox{ on } \Gamma_{TOP},\\
\bm{u} (x) = \bm{0}, &\mbox{ on } \Gamma_{0}. 
\end{cases}
\end{equation}
The physical problem is the classical benchmark of the lid-driven cavity problem with a parametrized diffusivity constant $\nu(\mupar)$. In this case the impossibility of recovering an affine decomposition of the differential operators is given by the convective term, which is by nature a nonlinear term, and by the parametrized diffusion term. The diffusivity constant $\nu(\mupar)$ has in fact been parametrized by the following non-linear function:
\begin{equation}
\nu(\bm x;\mupar) = \frac{e^{2({-2(x_1 - \mu_1 - 0.5)^2 - 2(x_2 - \mu_2 - 0.5)^2}) } } {100} + 0.01,
\end{equation}
the above function is a Gaussian function and the position of its center has been parametrized using the parameter vector $\mupar = (\mu_1,\mu_2)$. 
For the particular case, the discretized algebraic version of the continuous formulation can be rewritten as:
\begin{equation}
\left(
\begin{matrix}
\bm C(\bm u) + \bm A(\mupar) & \bm B^T \\
\bm B & 0
\end{matrix}
\right)
\left(
\begin{matrix}
\bm u\\
p
\end{matrix}
\right)
=
\left(
\begin{matrix}
\bm f\\
0
\end{matrix}
\right).
\end{equation}
The matrix $\bm A(\mupar)$ represents the discretized diffusion operator, the matrix $\bm C(\bm u)$ represents the discretized non-linear convective operator while the term $\bm B$ represents the divergence operator. The term $\bm A(\mupar)$ is characterized by a non-affine parametric dependency while the term $\bm C(\bm u)$ is characterized by non-linearity with respect to the solution. The velocity and pressure fields are approximated as:
\begin{equation}
\bm u (\mupar) \approx \sum_{q=1}^{N_u} c_q^u(\mupar) \bm h_q^u, \quad p (\mupar) \approx \sum_{q=1}^{N_p} c_q^p(\mupar) \bm h_q^p,
\end{equation}
and, in order to achieve an efficient offline-online splitting, the discretized operators are approximated by the matrix version of the DEIM algorithm and expressed as:
\begin{equation}
\bm A (\mupar) \approx \sum_{q=1}^{N_A} c_q^A(\mupar) \bm h_q^A, \quad \bm C (\bm u) \approx \sum_{q=1}^{N_C} c_q^C(\bm c_u) \bm h_q^C.
\end{equation}
The problem is discretized using the finite volume method and a staggered cartesian grid made of $20\times 20$ cell-centered finite volume elements. The DEIM algorithm has been implemented using $100$ samples chosen randomly inside the training space $\mathcal P_{\mathrm{EIM}}^{\mathrm{train}} \in [-0.5,0.5]^2$. The magic points necessary for the implementation of the DEIM algorithm are chosen to be coincident to the cell centers of the discretized problem. The basis functions $\bm h_q^A$ and $\bm h_C^A$ are obtained using the DEIM algorithm applied on the vectorized version of the discretized differential operator snapshots computed during the training stage $\bm{S}_A = [\mathrm{vec}(\bm A_1), \dots, \mathrm{vec}(\bm A_{M})]$ and $\bm{S}_C = [\mathrm{vec}(\bm C_1), \dots, \mathrm{vec}(\bm C_{M})]$. The snapshot matrices $\bm S_A$ and $\bm S_C$ contain in fact the discretized differential operators in vector form obtained for the different samples of the training set.
\begin{figure}
\begin{minipage}{\textwidth}
\centering
\begin{minipage}{0.24\textwidth}
\centering
\includegraphics[width=\textwidth]{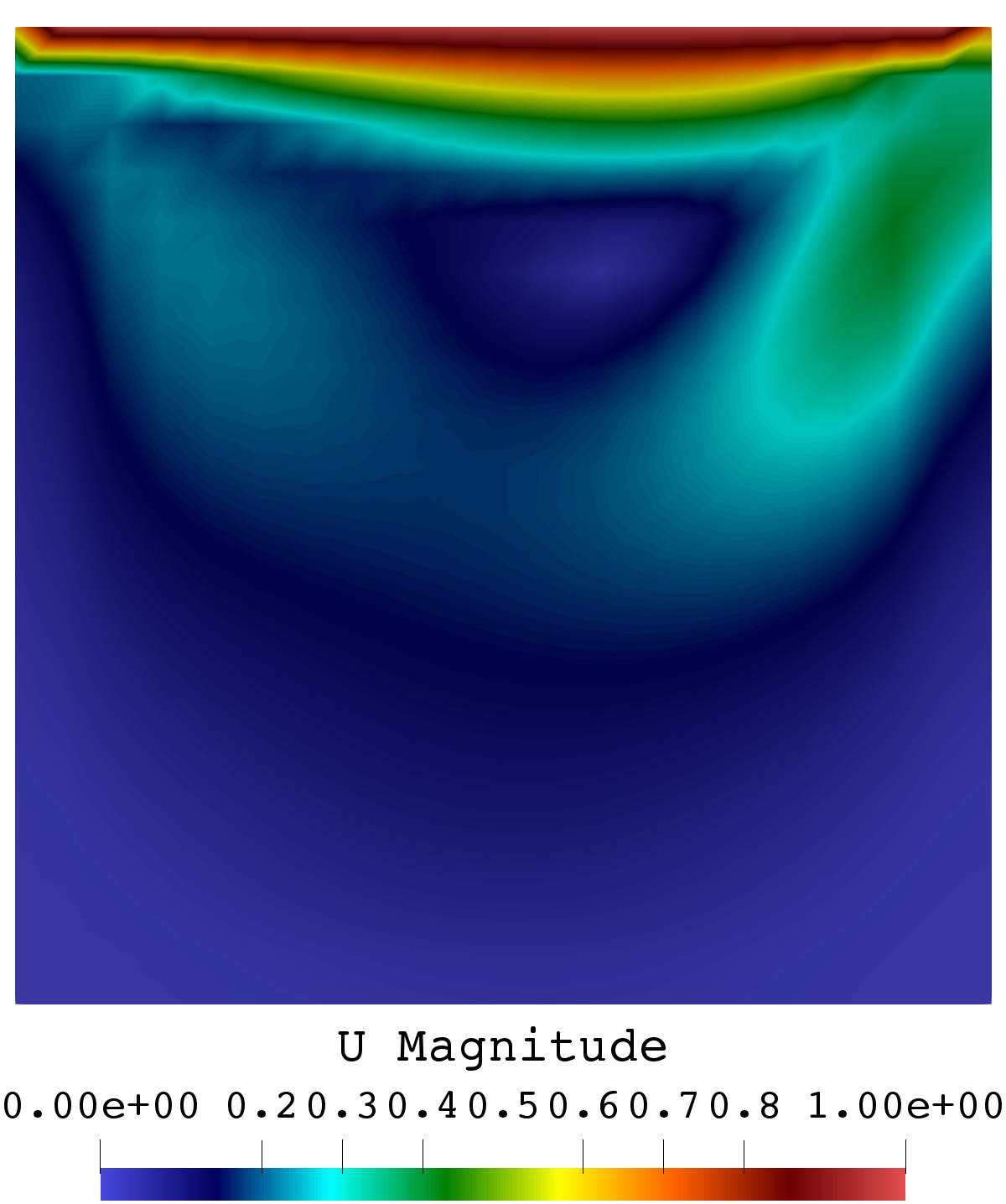}\\
\footnotesize(a)
\end{minipage}
\begin{minipage}{0.24\textwidth}
\centering
\includegraphics[width=\textwidth]{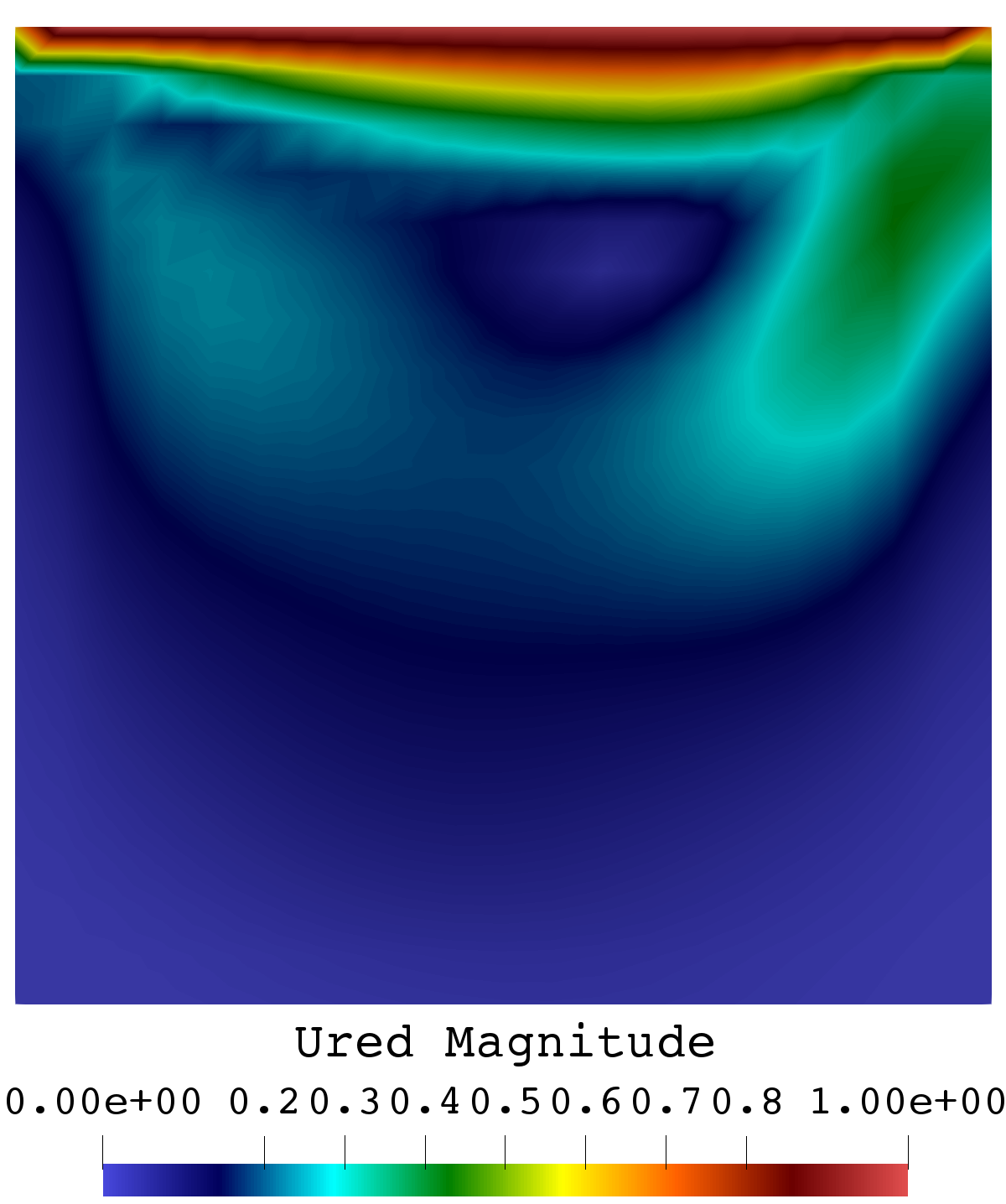}\\
\footnotesize(b)
\end{minipage}
\begin{minipage}{0.24\textwidth}
\centering
\includegraphics[width=\textwidth]{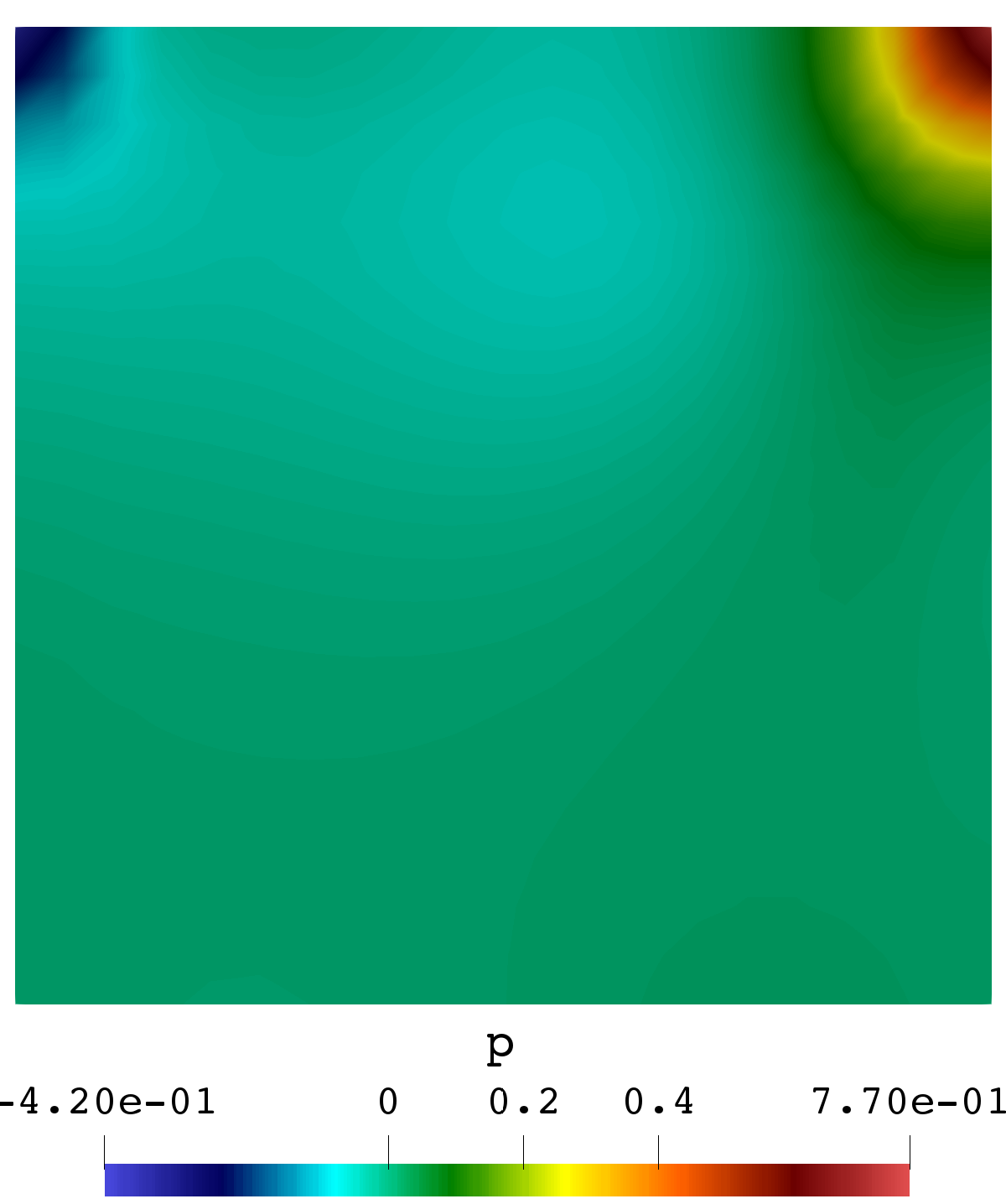}\\
\footnotesize(c)
\end{minipage}
\begin{minipage}{0.24\textwidth}
\centering
\includegraphics[width=\textwidth]{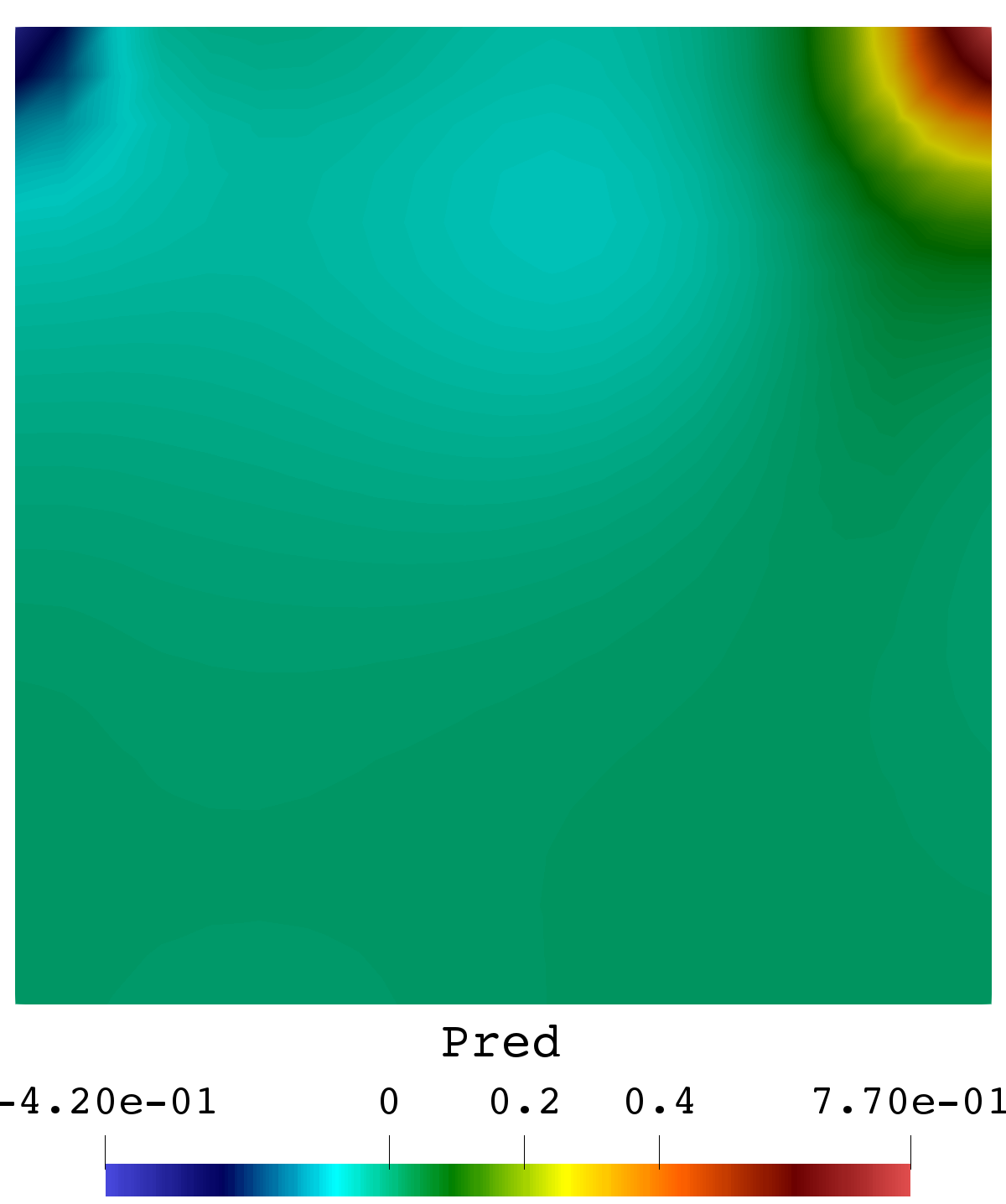}\\
\footnotesize(d)
\end{minipage}
\end{minipage}
\caption{Comparison between the FOM velocity (a) and pressure (c) fields and the ROM velocity (b) and pressure (d) fields. The plots are reported for one selected sample value inside the testing set. The reduced order model solutions have been computed using $14$ basis functions for the velocity space, $10$ for the pressure space, $10$ DEIM basis functions for the convective matrix  $\bm C$ and $10$ DEIM basis functions for the diffusion matrix $\bm B$.
}\label{fig:DEIM_comp}
\end{figure}

In Figure \ref{fig:DEIM_comp} we report the comparison of the Full Order Model fields and the reduced order model ones; the comparison is depicted for a parameter sample not used to train the ROM. On the right side of Figure \ref{fig:mdeim} we report the convergence analysis for the numerical example. The plots are performed testing the reduced order model on $100$ additional sample values selected randomly inside the parameter space $\mathcal P_{\mathrm{EIM}}^{\mathrm{test}} \in [-0.5,0.5]^2$. In the plots is reported the average value over the testing space of the $L^2$ relative error. 

\begin{figure}[h]
\begin{minipage}{0.5\textwidth}
\includegraphics[width=\textwidth]{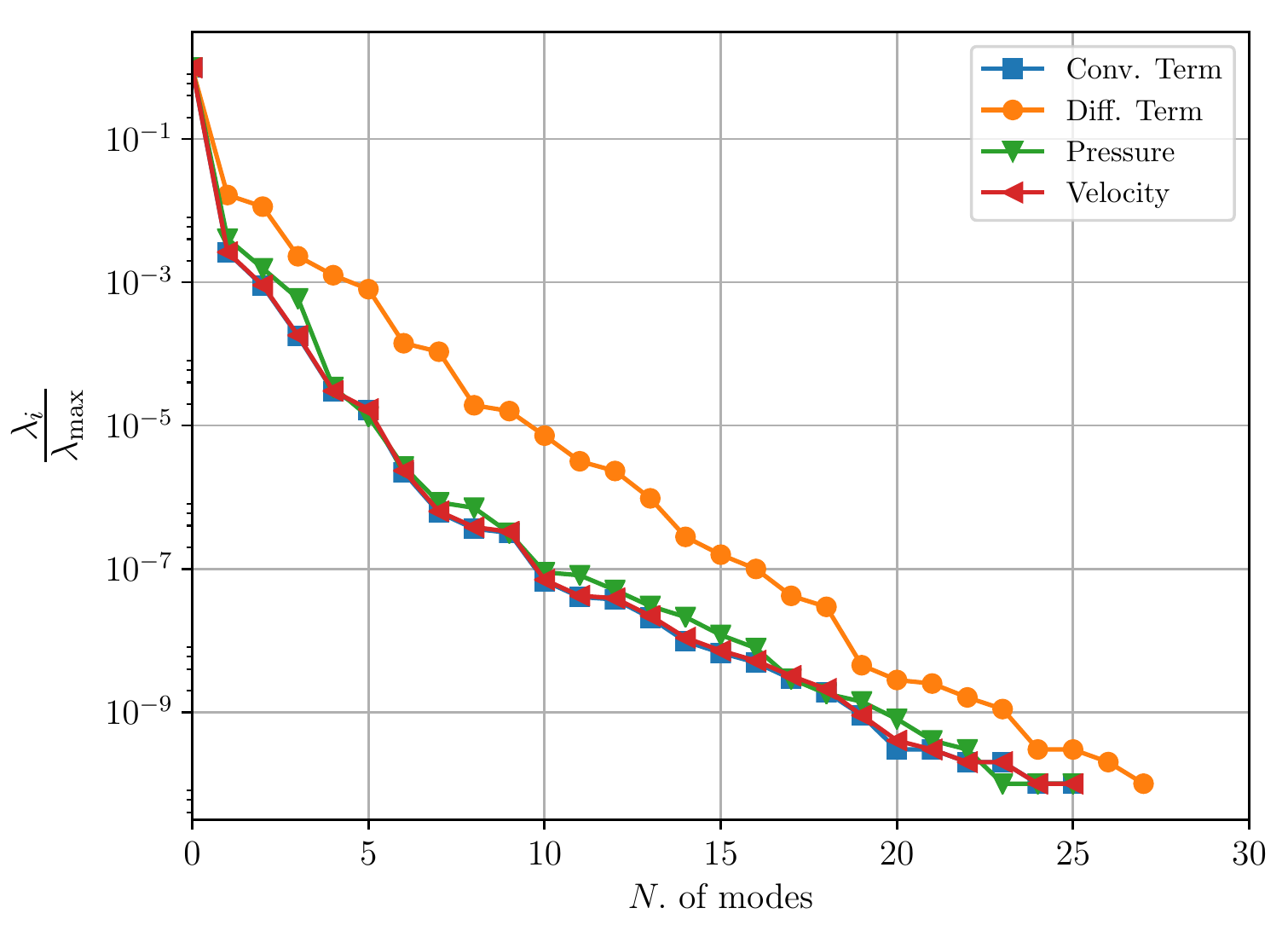}
\end{minipage}
\begin{minipage}{0.5\textwidth}
\includegraphics[width=\textwidth]{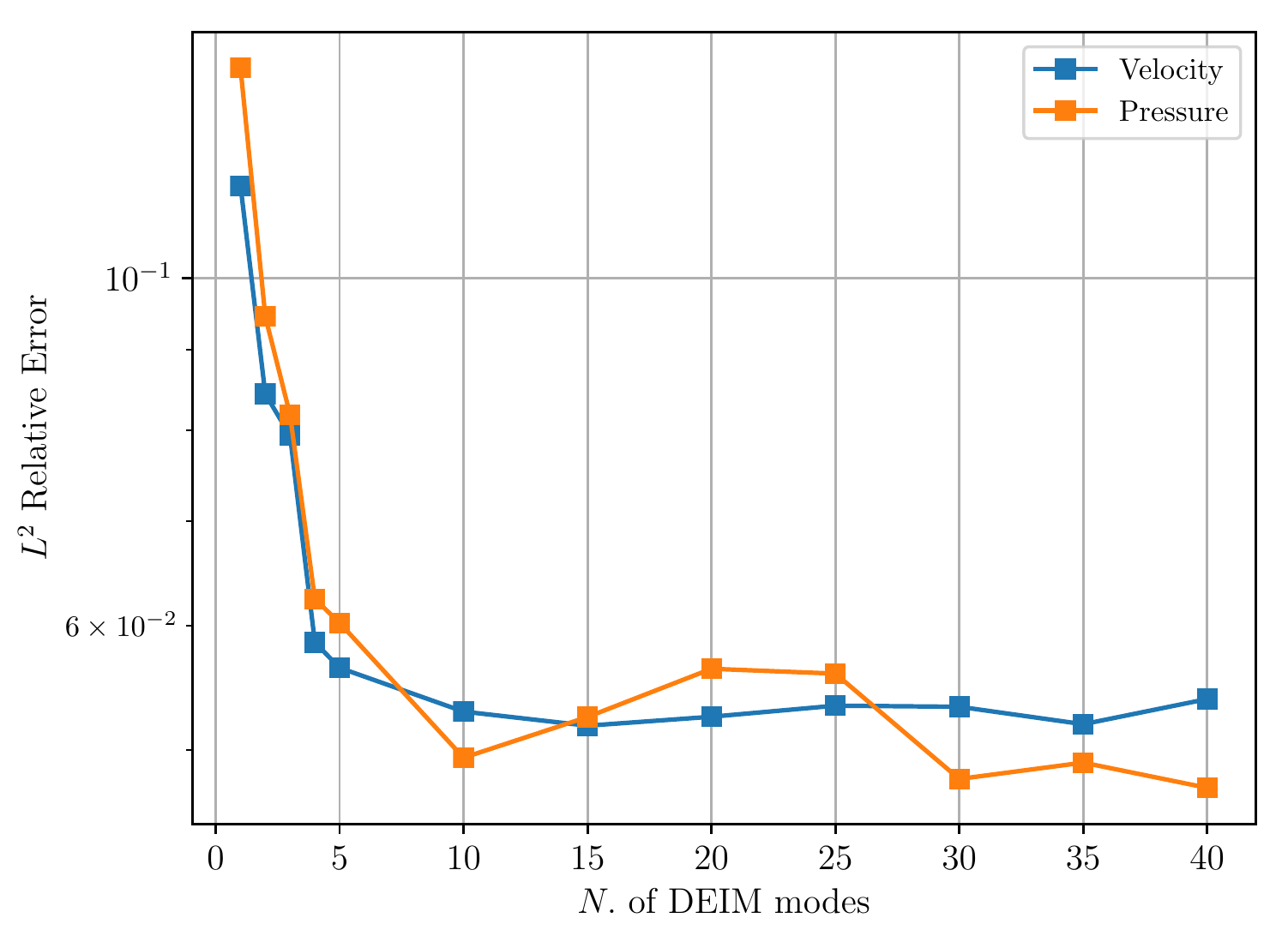}
\end{minipage}
\caption{Eigenvalue Decay of the POD procedure during the DEIM algorithm (left plot). The convergence analysis with respect to the number of DEIM basis function (right plot), which is computed using the average value over the testing set of the $L^2$ relative error, has been performed keeping constant the number of basis functions used to approximate the velocity and pressure fields ($N_u=14, N_p=10$) and changing the number of DEIM basis functions used to approximate the convective and diffusion terms ($N_C=N_A$).}\label{fig:mdeim}
\end{figure}

\section{Advanced tools: reduction in parameter spaces}
\label{sec:active}

Often the use of the aforementioned geometrical morphing techniques in
Section~\ref{sec:geom} does not tell us how many control points,
i.e. geometrical parameters, are enough to conduct a proper
analysis. This leads to self imposing too few parameters in order to
avoid the curse of dimensionality and dealing with intractable
problems. To overcome this issue there exist techniques for
parameter space dimensionality reduction, both linear and
nonlinear. In particular we present here the active subspaces
property for linear dimensionality reduction, while in the last
section we show an overview of possible nonlinear methods. 

These methods have to be intended as general tools, not restricted to
parametrised PDEs. Moreover the nature of the parameter space can be
very diverse, including both geometrical and physical parameters. They
are data-driven tools working with couples of input/output data, and
they can be used to enhance other model order reduction techniques.

\subsection{Active subspaces property and its applications}
In this and the following sections we present the active subspaces
(AS) property proposed by Trent Russi~\cite{russi2010uncertainty} and
developed by Paul Constantine~\cite{constantine2015active}. In brief,
active subspaces are defined as the leading eigenspaces of the second
moment matrix of the function's gradient and constitute a global
sensitivity index.

We present how to exploit AS to reduce the
parameter space dimensionality, and use it as a powerful
pre-processing tool. Moreover we show how to combine it with a model
reduction methodology and present its application to a cardiovascular
problem. In particular, after identifying a lower dimensional
parameter subspace, we sample it to apply further model order
reduction methods. This results in improved computational efficiency.

The main characteristic of AS is the fact that it uses information of
both the output function of interest, and the input parameter space in
order to reduce its dimensionality. 
The active subspaces have been successfully employed in many
engineering fields. We cite, among others, applications in
magnetohydrodynamics power generation model
in~\cite{glaws2017dimension}, in naval engineering for the computation
of the total drag resistance with both geometrical and physical
parameters in~\cite{tezzele2018dimension,demo2018isope}, and for
constrained shape optimization~\cite{lukaczyk2014active} using the concept of shared active
subspaces in~\cite{tezzele2018model}. There are also applications to
turbomachinery in~\cite{bahamonde2017active}, to uncertainty
quantification in the numerical simulation of a scramjet
in~\cite{constantine2015exploiting}, and to accelerate Markov chain
Monte Carlo in~\cite{constantine2016accelerating}.
Extension of active subspace discovery for time-dependent processes
and application to a lithium ion battery model can be found
in~\cite{constantine2017time}. A multifidelity approach to reduce
the cost of performing dimension reduction through the computation of
the active subspace matrix is presented in~\cite{lam2018multifidelity}.
In~\cite{eriksson2018scaling} they exploit AS for Bayesian
optimization, while the coupling with reduced order methods can be
found in~\cite{demo2019cras}, for a non-intrusive data-driven approach,
and the coupling with POD-Galerkin methods for biomedical engineering
will be presented in Section~\ref{sec:as-pod} following~\cite{tezzele2018combined}.

\subsection{Active subspaces definition}
Given a parametric scalar function $f(\mupar) : \mathbb{R}^p
\rightarrow \mathbb{R}$, where $p$ is the number of parameters,
representing the output of interest, and given a probability density
function $\rho: \mathbb{R}^p \rightarrow \mathbb{R}^+$ that represents
uncertainty in the model inputs, active
subspaces are low dimensional subspaces of the input space where $f$
varies the most on average. It is a property of the of the pair $(f,
\rho)$ (see~\cite{constantine2015active}). In order to uncover AS we exploit
the gradients of the function with respect to the input parameters, so
it can be viewed as a derivative-based sensitivity analysis that
unveils low dimensional parametrization of $f$ using some linear
combinations of the original parameters. Roughly speaking, after a
rescaling of the input parameter space to the hypercube $[-1, 1]^p$,
we rotate it until the lower rank approximation of the output of
interest is discovered, that means a preferred direction in the input
space is identified. Then we can project all the data onto the orthogonal
space of this preferred direction and we can construct a surrogate model on
this low dimensional space.

Let us add some hypotheses to $f$ in order to proper construct the
matrix we will use to find the active subspaces: let $f$ be continuous and
differentiable with square-integrable partial derivatives in the
support of $\rho$. We define the so-called uncentered covariance
matrix $\mathbf{C}$ of the gradients of $f$, as the matrix whose
elements are the average products of partial derivatives of the map
$f$, that~is 
\begin{equation}
\label{eq:uncentered}
\mathbf{C} = \mathbb{E}\, [\nabla_{\mupar} f \, \nabla_{\mupar} f
^T] =\int (\nabla_{\mupar} f) ( \nabla_{\mupar} f )^T
\rho \, d \mupar,
\end{equation}
where $\mathbb{E}$ is the expected value, and $\nabla_{\mupar} f =
\nabla f(\mupar) = \left [ \frac{\partial f}{\partial \mu_1}, \dots,
  \frac{\partial f}{\partial \mu_p} \right ]^T$ is the column vector
of partial derivatives of $f$. This matrix is symmetric so it has a
real eigenvalue decomposition:
\begin{equation}
\mathbf{C} = \mathbf{W} \mathbf{\Lambda} \mathbf{W}^T,
\end{equation}
where $\mathbf{W} \in \mathbb{R}^{p \times p}$ is the
orthogonal matrix of eigenvectors, and $\mathbf{\Lambda}$ is the
diagonal matrix of non-negative eigenvalues arranged in descending
order. The eigenpairs of the uncentered covariance matrix define the
active subspaces of the pair $(f, \rho)$. Moreover Lemma 2.1
in~\cite{constantine2014active} states that the eigenpairs are
functionals of $f(\mupar)$ and it holds
\begin{equation}
\lambda_i = \mathbf{w}_i^T \mathbf{C} \mathbf{w}_i = \int
(\nabla_{\mupar} f^T \mathbf{w}_i)^2 \rho \, d \mupar, 
\end{equation}
that means that the $i$-th eigenvalue is the average squared directional
derivative of $f$ along the eigenvector $\mathbf{w}_i$. Alternatively
we can say that the eigenvalues represent the magnitude of the
variance of $\nabla_{\mupar} f$ along their eigenvectors orientation. So small
values of the eigenvalues correspond to small perturbation of $f$
along the corresponding eigenvectors. It also follows that large gaps
between eigenvalues indicate directions where $f$ changes the most on
average. Since we seek for a lower dimensional space of dimension $M < p$ where
the target function has exactly this property, we define the active
subspace of dimension $M$ as the span of the first $M$ eigenvectors
(they correspond to the most energetic eigenvalues before a gap). Let us partition
$\mathbf{\Lambda}$ and $\mathbf{W}$ as
\begin{equation}
\mathbf{\Lambda} =   \begin{bmatrix} \mathbf{\Lambda}_1 & \\
                                     &
                                     \mathbf{\Lambda}_2\end{bmatrix},
\qquad
\mathbf{W} = \left [ \mathbf{W}_1 \quad \mathbf{W}_2 \right ],
\end{equation}
where $\mathbf{\Lambda}_1 = \text{diag}(\lambda_1, \dots, \lambda_M)$, and
$\mathbf{W}_1$ contains the first $M$ eigenvectors. We can use
$\mathbf{W}_1$ to project the original parameters to the active
subspace obtaining the reduced parameters, that is the input space is
geometrically transformed and aligned with $\mathbf{W}_1$, in order to
retain only the directions where the function variability is high. We
call active variable $\mupar_M$ the range of $\mathbf{W}_1^T$, and
inactive variable $\etapar$ the range of $\mathbf{W}_2^T$:
\begin{equation}
\label{eq:active_var}
\mupar_M = \mathbf{W}_1^T\mupar \in \mathbb{R}^M, \qquad
\etapar = \mathbf{W}_2^T \mupar \in \mathbb{R}^{p - M} .
\end{equation}
We can thus express any point in the parameter space $\mupar \in
\mathbb{R}^p$ in terms of $\mupar_M$ and $\etapar$ as
\begin{equation}
\mupar = \mathbf{W}\mathbf{W}^T\mupar =
\mathbf{W}_1\mathbf{W}_1^T\mupar +
\mathbf{W}_2\mathbf{W}_2^T\mupar = \mathbf{W}_1 \mupar_M +
\mathbf{W}_2 \etapar.
\end{equation}

The lower-dimension approximation, or surrogate quantity of interest,
$g : \mathbb{R}^M \rightarrow \mathbb{R}$ of the target function $f$,
is a function of only the active variable $\mupar_M$ as
\begin{equation}
  f (\mupar) \approx g (\mathbf{W}_1^T \mupar) = g(\mupar_M).
\end{equation}
Such $g$ is called ridge function (see~\cite{pinkus2015ridge}) and, as we can
infer from this section, it is constant along the span of $\mathbf{W}_2$. 

From a practical point of view Eq.~\eqref{eq:uncentered} is estimated
through Monte Carlo method. We draw $N_{\text{train}}$ independent
samples $\mupar^{(i)}$ according to the measure $\rho$ and we
approximate 
\begin{equation}
\label{eq:covariance_approx}
\mathbf{C} \approx \hat{\mathbf{C}} = \frac{1}{N_{\text{train}}} \sum_{i=1}^{N_{\text{train}}} \nabla_{\mupar} f_i \,
\nabla_{\mupar} f^T_i  = \hat{\mathbf{W}} \hat{\mathbf{\Lambda}} \hat{\mathbf{W}}^T,
\end{equation}
where $\nabla_{\mupar} f_i = \nabla_{\mupar}
f(\mupar^{(i)})$. In~\cite{constantine2015active} they provide an
heuristic formula for the number of samples $N_{\text{train}}$ needed to properly
estimate the first $k$ eigenvalues, that is
\begin{equation}
N_{\text{train}} = \alpha \, k \ln(p),
\end{equation}
where $\alpha$ usually is between 2 and 10.
Moreover they prove that for sufficiently large $N_{\text{train}}$ the
error $\varepsilon$ committed in the approximation of the active
subspace of dimension $n$ is bounded from above by
\begin{equation}
  \varepsilon = \text{dist}(\text{rank}(\mathbf{W}_1),
  \text{rank}(\hat{\mathbf{W}}_1)) \leq \frac{4 \lambda_1
    \delta}{\lambda_n - \lambda_{n+1}},
\end{equation}
where $\delta$ is a positive scalar bounded from above by
$\frac{\lambda_n - \lambda_{n+1}}{5 \lambda_1}$. Here we can clearly
see how the gap between two eigenvalues is important in order to
properly approximate $f$ exploiting AS.\\

\subsection{Some examples}
\label{chap_1_4:some_examples}

In this section we present two simple examples with the computation of
the active subspaces using analytical gradients. To highlight the
possibility that the presence of an active subspace is not alway
guaranteed we also show an example in this direction. We choose
for both the cases a tridimensional input parameter space without loss
of generality. In order to identify the low-dimensional structure of
the function of interest we use the sufficient summary plots,
developed in~\cite{cook2009regression}. In our cases, they are 
scatter plots of $f(\mupar)$ against the active variable
$\mupar_M$. 

The presence of an active subspace is not always guaranteed. For
example every target function that has a radial symmetry has not a
lower dimensional representation in terms of active variable. This is
due to the fact that there is no rotation of the input parameter space
that aligns it along a preferred direction since all of them are
equally important.

Let us consider for example the function $f (\mupar) = \frac{1}{2} \mupar^T
\mupar$ representing an $n$-dimensional elliptic paraboloid, where the
parameter $\mupar$ is a column vector in $[-1, 1]^3$. In this case we
have the exact derivatives, in fact $\nabla_{\mupar} f = \mupar$ and
we do not have to approximate them. If we draw 1000 samples and we
apply the procedure to find an active subspace and we plot the
sufficient summary plot in one dimension, as in
Figure~\ref{fig:active_not_working}, we clearly see how it is unable
to find the active variable along which $f$ varies the most on
average. In fact there is not a significant gap between the
eigenvalues, since we have that $\mathbf{C} = \frac{1}{3}
\mathbf{Id}$. Moreover the projection of the data onto the inactive
subspace suggest us the presence of an $n$-dimensional elliptic
paraboloid.

\begin{figure}[h!]
\centering
\includegraphics[width=0.457\textwidth]{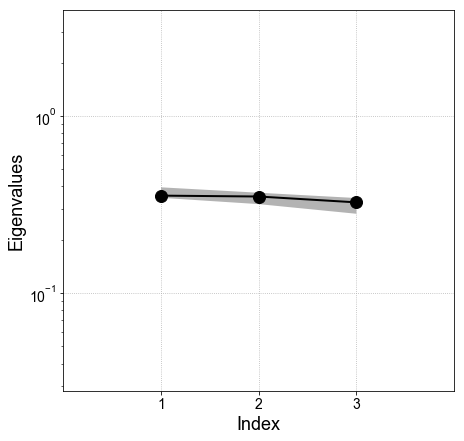}\hfill
\includegraphics[width=0.45\textwidth]{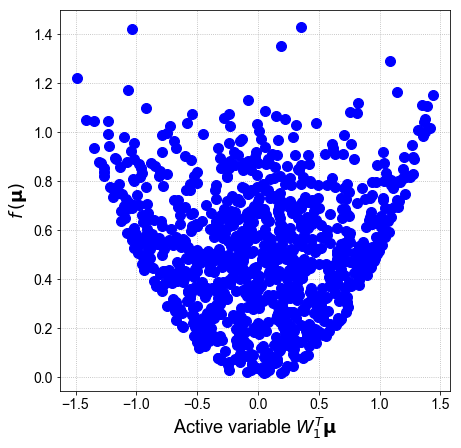}
\caption{Example of an output function with a radial symmetry. On the
  left the exact eigenvalues of the uncentered covariance matrix. On
  the right the   sufficient summary plot in one dimension ($f(\mupar)$ against
  $\mupar_M = \mathbf{W}_1^T\mupar$) shows how the projection of
  the data along the inactive directions does not unveil a lower
  dimensional structure for $f$.}
\label{fig:active_not_working}
\end{figure}

Let us consider now another quadratic function in 3 variables. We define the
output of interest $f$ as
\begin{equation}
f (\mupar) = \frac{1}{2} \mupar^T \mathbf{A} \mupar, 
\end{equation}
where $\mupar \in [-1, 1]^3$, and $\mathbf{A}$ is symmetric positive
definite with a major gap between the first and the second
eigenvalue. With this form we can compute the exact gradients as
$\nabla_{\mupar} f (\mupar) = \mathbf{A} \mupar$ and, taking $\rho$ as
a uniform density function, compute $\mathbf{C}$ as
\begin{equation}
\mathbf{C} = \mathbf{A} \left ( \int \mupar \mupar^T \rho \, d \mupar
\right ) \mathbf{A}^T = \frac{1}{3} \mathbf{A}^2.
\end{equation}
So the squared eigenvalues of $\mathbf{A}$ are the eigenvalues of
$\mathbf{C}$. Since, by definition, $\mathbf{A}$ has a
significant gap between the first and the second eigenvalues we can
easily find an active subspace of dimension one.  

In Figure~\ref{fig:active_quadratic} we show the sufficient summary
plot of $f$ with respect to its active variable. A clear univariate
behavior is present, as expected, so we can easily construct $g$, for
instance taking a quadratic unidimensional function. We can also see the associated
eigenvalues of the uncentered covariance matrix.

\begin{figure}[h!]
\centering
\includegraphics[width=0.457\textwidth]{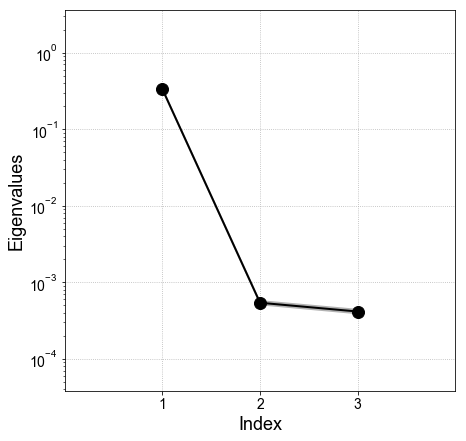}\hfill
\includegraphics[width=0.45\textwidth]{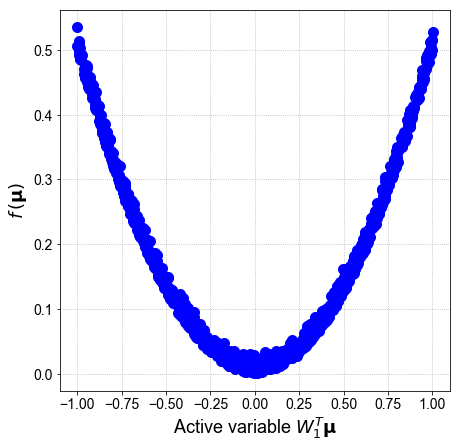}
\caption{Example of a quadratic function with an active subspace of
  dimension one. On the left the exact eigenvalues of the uncentered covariance matrix. On
  the right the sufficient summary plot in one dimension ($f(\mupar)$ against
  $\mupar_M = \mathbf{W}_1^T\mupar$) shows how the projection of
  the data along the inactive directions unveils a univariate structure for $f$.}
\label{fig:active_quadratic}
\end{figure}

\subsection{Active subspaces as pre-processing tool to enhance model reduction}
\label{sec:as-pod}
The presence of an active subspace for an output of interest, derived
from the solution of a parametric PDE, can be exploited for further
model order reduction. Thus, in this context, AS can be seen as a
powerful pre-processing technique to both reduce the parameter space
dimensionality and boost the performance of other model order
reduction methods. 

In~\cite{tezzele2018combined} the active
subspace for a relative pressure drop in a stenosed carotid artery is
used as a reduced sampling space to improve the reconstruction of
the output manifold.
We used as parameters the displacement of a selection radial basis functions (RBF)
control points to simulate the occlusion of the carotid artery after
the bifurcation. For a review of RBF interpolation technique
see Section~\ref{sec:rbf}. In Figure~\ref{fig:carotid_def} two
different views of the same carotid and the control points highlighted with green
dots. The target function was a relative pressure drop between the two
branches computed solving a stationary Navier-Stokes problem. 

After the identification of the active subspace we exploit it by sampling
the original full parameter space along the active subspace. These
sampled parameters were used, in the offline phase, to construct the
snapshots matrix for the training of a ROM. This leads to better
approximation properties for a given number of snapshots with respect
to usual sampling techniques. The natural construction of the
uncentered covariance matrix, which uses information from both the
inputs and the outputs is the reason of such improvements.

% After the identification of the active subspace we used its intersection
% with the original full parameter space to sample the training
% snapshots for the POD-Galerkin methods. This combination, called AS+POD,
% leads to a faster decay of the singular values for velocity, supremizers,
% and pressure, and also a faster decay of the error for both velocity and 
% pressure. Analogous results hold for the output of interest.

The same idea has been coupled also with non-intrusive model order reduction
techniques, such as proper orthogonal decomposition with interpolation
(PODI), in~\cite{tezzele2019shape}, while for the reconstruction of modal
coefficient using PODI with AS for low computational budget we
suggest~\cite{demo2019cras}.

\subsection{About nonlinear dimensionality reduction}
\label{sec:nonlinear}

There are plenty of other techniques that reduce the dimensionality of
a given dataset. They do not exploit simultaneously the structure of
the output function and the input parameter space like AS, they just
express the datavectors we want to reduce in a reduced space embedded
in the original one. For a comprehensive overview see~\cite{lee2007nonlinear}
and~\cite{van2009dimensionality}. The main assumption is that the
dataset at hand has an intrinsic dimensionality, which is lower than the full space
where they belong. This means that the data are lying on or near a
manifold with dimensionality $d$ embedded in a greater space of
dimension $D$. If we approximate this manifold with a linear subspace
we use a linear dimensionality reduction technique, otherwise assuming
the data lie on a curved manifold we can achieve better results using
a nonlinear method. Unfortunately in general neither the
characteristics of the manifold, nor the intrinsic dimensionality are
known, so the dimensionality reduction problem is ill-posed. There are
several algorithms to detect the intrinsic dimensionality of a
dataset, we suggest the review in~\cite{camastra2003data}.
Among all we cite two of the most popular techniques, which are locally
linear embedding (LLE) presented in~\cite{roweis2000nonlinear}, and
Isomap~\cite{tenenbaum2000global}. Extensions for the two
methods can be found in~\cite{bengio2004out}.

LLE seeks to preserve local
properties of the high dimensional data in the embedded space, and it
is able to detect non-convex manifolds. In particular it preserves
local reconstruction weights of the neighborhood graph, that is LLE
fits a hyperplane through each data point and its nearest
neighbors. Some applications can be found
in~\cite{gonzalez2016computational} for biomedical engineering, or
in~\cite{ibanez2018manifold} for computational mechanics.

Isomap instead seeks to preserve geodesic (or curvilinear) distances
between the high dimensional data points and the lower dimensional
embedded ones. Its topological stability has been investigated
in~\cite{balasubramanian2002isomap}, while it has been used in for
micromotility reconstruction in~\cite{arroyo2012reverse}. 

Other approaches include for example a manifold walking algorithm that
has been proposed in~\cite{meng2015identification} and
in~\cite{meng2018nonlinear}.

\section{Conclusion and Outlook}
\label{chap1:sec:conclude}

This introductory chapter provided the means to understand projection based MOR methods in section~\ref{sec:basicNotions}. 
Various techniques allowing the parametrization of complicated geometries are provided in section~\ref{sec:geom}.
Since many geometries of interest introduce non-linearities or non-affine parameter dependency an intermediate step such as the 
\emph{empirical interpolation method} is often applied. The basics were presented in section~\ref{sec:nonaffinity} and will be used 
further in the following chapters. 
The reduction in parameter space becomes necessary if high-dimensional parameter space are considered. 
Active subspaces (see section~\ref{sec:active}) provide a mean to tackle the curse of dimensionality.

Each chapter of the handbook gives an in-depth technical details upon a particular topic of interest.
This includes common MOR methods, several application areas of interest and a survey of current software frameworks for model reduction.
Whenever a method does not rely only on the PDE-based functional analysis setting introduced in this chapter, corresponding requirements will be mentoined within each technical chapter.

\begin{acknowledgement}
  We are grateful to EU-COST European Union Cooperation in Science
  and Technology, section EU-MORNET Model Reduction Network, TD 1307
  for pushing us into this initiative. This work is
  supported by European Union Funding for Research and Innovation ---
  Horizon 2020 Program --- in the framework of European Research
  Council Executive Agency: H2020 ERC CoG 2015 AROMA-CFD project
  681447 ``Advanced Reduced Order Methods with Applications in
  Computational Fluid Dynamics'' P.I. Gianluigi Rozza.
\end{acknowledgement}

\bibliographystyle{dgruyter}
%\printbibliography[env=bibnumeric]

\bibliography{biblio.bib}
\end{document}